\documentclass[aos, preprint]{imsart}

\RequirePackage{amsthm,amsmath,amsfonts,amssymb}
\RequirePackage[numbers]{natbib}
\RequirePackage[colorlinks,citecolor=blue,urlcolor=blue]{hyperref}
\RequirePackage{graphicx}

\usepackage{bm}
\usepackage[dvipsnames]{xcolor}
\usepackage{subfig}
\usepackage[normalem]{ulem}
\usepackage{enumitem}

\startlocaldefs
\theoremstyle{plain}
\newtheorem{theorem}{Theorem}[section]
\newtheorem{proposition}[theorem]{Proposition}
\newtheorem{lemma}[theorem]{Lemma}
\newtheorem{corollary}[theorem]{Corollary}

\theoremstyle{remark}
\newtheorem{definition}[theorem]{Definition}
\newtheorem{assumption}[theorem]{Assumption}
\newtheorem{remark}[theorem]{Remark}



\newcommand{\myappendix}{\hyperref[appendix]{Appendix}} 

\endlocaldefs

\begin{document}

\begin{frontmatter}
\title{
Estimating the hyperuniformity exponent of point processes}
\runtitle{Estimating the hyperuniformity exponent of point processes}

\begin{aug}
\author[A,B]{\fnms{Gabriel}~\snm{Mastrilli}\ead[label=e1]{gabriel.mastrilli@ensai.fr}},
\author[B]{\fnms{Bart{\l}omiej}~\snm{B{\l}aszczyszyn}\ead[label=e2]{Bartek.Blaszczyszyn@ens.fr}}
\and
\author[A]{\fnms{Fr\'ed\'eric}~\snm{Lavancier}\ead[label=e3]{frederic.lavancier@ensai.fr}}

\address[A]{Univ Rennes, Ensai, CNRS, CREST\printead[presep={,\ }]{e1,e3}}
\address[B]{Inria, ENS/PSL Univ, Paris, France \printead[presep={,\ }]{e2}}

\end{aug}

\begin{abstract}
We address the challenge of estimating the hyperuniformity exponent $\alpha$ of a  spatial point process, given only one realization of it. Assuming that the structure factor $S$ of the point process  follows a vanishing power law at the origin (the typical case of a hyperuniform point process), this exponent is defined as the slope near the origin of $\log S$. Our estimator is built upon the (expanding window) asymptotic variance of some wavelet transforms of the point process. By combining several scales and several wavelets, we develop  a multi-scale, multi-taper estimator $\widehat{\alpha}$. We analyze its asymptotic behavior, proving its consistency under various settings, and enabling the construction of  asymptotic confidence intervals for $\alpha$ when $\alpha < d$ and under Brillinger mixing. This construction is derived from a multivariate central limit theorem where the normalisations are non-standard and vary among the components.  We also present a non-asymptotic deviation inequality providing insights into the influence of tapers on the bias-variance trade-off of  $\widehat{\alpha}$. Finally, we investigate the performance of $\widehat{\alpha}$ through simulations,  and we apply our method to the analysis of hyperuniformity in a real dataset of marine algae.

\end{abstract}

\begin{keyword}[class=MSC]
\kwd[Primary ]{62M15}
\kwd{62M30}
\kwd[; secondary ]{62F12}
\kwd{60F05}
\kwd{60G55}
\end{keyword}

\begin{keyword}
\kwd{Scattering intensity}
\kwd{Structure factor}
\kwd{Wavelets}
\end{keyword}

\end{frontmatter}

 
 \section{Introduction}

Hyperuniform point processes exhibit slower growth in the variance of the number of points at large scales compared to Poisson point processes. Formally, a stationary point process  in Euclidean space is hyperuniform if the variance of its cardinality in a ball with radius $R$ is negligible with respect to the volume of this ball, as $R$ tends to infinity. 
This property distinguishes hyperuniform processes from homogeneous Poisson point processes and popular models derived from them, delving into the rigid structures created by long-range correlations.
Hyperuniform point process models include some Gibbs models with long-range interactions such as the sine-beta process  \cite{valko2009continuum, dereudre2021dlr}, Coulomb gases~\cite{kunz1974one, leble2021two} and Riesz gas \cite{boursier2022decay}, some determinantal point processes such as the Jinc \cite{soshnikov2002gaussian} and Ginibre~\cite{ginibre1965statistical} 
 models, and some cloaked and perturbed lattices \cite{kim2018effect, klatt2020cloaking}. 
For further insights and examples in physical literature, refer to Torquato~\cite{torquato2018hyperuniform}, in mathematical literature to Ghosh and Lebowitz~\cite{ghosh2017fluctuations} and to the recent  monograph by Coste~\cite{coste2021order}.

Initially conceptualized in statistical physics by Torquato and Stillinger~\cite{torquato2003local}, hyperuniform systems have attracted significant interest due to their unique position between perfect crystals, liquids, and glasses \cite{torquato2016hyperuniformity, zachary2009hyperuniformity, ouguz2017hyperuniformity, weijs2015emergent, hexner2017noise, lin2017hyperuniformity}. These distinctive properties make them valuable for designing innovative materials, as seen in works like \cite{florescu2009designer, muller2013silicon, froufe2016role, leseur2016high, gorsky2019engineered, yu2023evolving, degl2015thz}.
Recently, hyperuniformity has gained attention in various applied contexts, offering insights into phenomena ranging from DNA and the immune system to active matter theory, urban systems, ices, rock dispersion on Mars, hydrodynamics, avian photoreceptors, and cosmology \cite{wilken2023spatial, mayer2015well, huang2021circular, dong2023hyperuniform, martelli2017large, zhu2023dispersed, lei2019hydrodynamics, jiao2014avian, philcox2023disordered}. Detecting and quantifying hyperuniformity is crucial across these diverse domains. Despite this, statistical inference for hyperuniformity has only recently gained attention by \cite{hawat2023estimating, klatt2022genuine}. Our work specifically addresses this important problem.

It is widely understood that spectral representation is a valuable tool for analyzing the variance of signals. Similarly, hyperuniformity in point processes can be redefined using Bartlett's spectral measure, whose density (if it exists) is known in physical terms as the {\em structure factor}, denoted by $S$ (see \eqref{defS}). To be more precise, under certain conditions on the process, hyperuniformity becomes equivalent to the structure factor $S(k)$ vanishing at zero frequency $k=0$. This approach  allows also for the classification of processes based on how quickly their structure factor $S$ diminishes at zero. A common assumption is that $S$ follows a power-law behavior near zero: $S(k) \sim t|k|^{\alpha}$ as $|k| \to 0$, with $t>0$  and $\alpha \geq 0$ (note that $\alpha = 0$ corresponds to the non hyperuniform case). Within this framework, estimating the parameter $\alpha$, called {\em hyperuniformity exponent,} not only helps in detecting hyperuniformity but also provides valuable insights into correlations at  large scale.

The main contribution of our paper is the introduction of a family of estimators for $\alpha$ that can be computed using only one realization of the point process. 
We  demonstrate their consistency and establish asymptotic confidence intervals, in an expanding window regime, thereby providing  the first theoretically well-grounded estimators of the hyperuniformity exponent $\alpha$. 

\smallskip

Previous works in this area, 
starting with \cite{torquato2018hyperuniform}, involve: (i) estimating the structure factor $S(k)$ for small, but not zero, frequencies $k$; and then (ii), analyzing the behavior of this estimator near zero. 
Recent advancements in estimating the structure factor, as demonstrated by \cite{hawat2023estimating, klatt2022genuine, rajala2023fourier, grainger2023spectral, yang2024fourier}, allow for the estimation of  $\alpha$ through this double-limit procedure, though the theoretical guarantees and error control remain unclear.  In contrast, our approach stands out from these prior works by directly estimating the rate of the structure factor at zero frequency, i.e., determining the value of $\alpha$ (which can be $0$ or positive, thereby allowing for both detecting hyperuniformity and quantifying it), without the need for an intermediate step of estimating the structure factor for non-zero frequencies. In what follows, we present the key observations in this regard.

Our construction of an estimator of $\alpha$ for a point process $\Phi$ on $\mathbb{R}^d$ involves considering the variance of the linear statistic $\sum_{x \in \Phi} f(x/R)$, which scales like $R^{d - \alpha}$ as $R$ goes to infinity, for any suitable smooth function $f$. This key asymptotic result motivates the definition of a simple estimator for $\alpha$, which, while not yet refined, serves as a conceptual starting point: For a smooth and rapidly decreasing function $f$ with zero integral, $\operatorname{Var}\left[\sum_{x \in \Phi} f(x/R)\right]$ can be estimated by $\left(\sum_{x \in \Phi} f(x/R)\right)^2$ and leads  to the following simple estimator of $\alpha$:
\[ d - \frac{\log\left(\sum_{x \in \Phi_R} f(x/R)\right)^2}{\log(R)},\]
where $\Phi_R:=\Phi\cap [-R,R]^d$ accounts for the fact that we observe  the process within a finite yet expanding window $[-R,R]^d$. 
The  consistency of this estimator, as $R\to\infty$, is formulated in Theorem~\ref{thm_cv_alpha}. However, due to its reliance on estimating variance from a single realization, this estimator is inefficient in practice, converging at a rate of $\log(R)$. But it can be enhanced by a more efficient exploitation of the available spectrum, and by employing several tapers. 

Indeed, firstly, we  examine the linear statistics $\sum_{x \in \Phi_R} f(x/R^j)$ for several scales $j\in(0,1)$ in a discrete set $J = \{j_1, \ldots,  j_{|J|}\}$, whose variances scale now like $R^{(d-\alpha)j}$ as $R$ approaches infinity. 
Similar to the classical estimation of the long-range memory exponent of times series~\cite{fay2009estimators}, by combining the logarithms of squares of these statistics, we define the following ``multi-scale'' estimator for $\alpha$:
\[ d-\sum_{j \in J}\frac{w_j}{\log(R)}\log\left(\sum_{x \in \Phi_R} f(x/R^j)\right)^2,\]
where $(w_j)_{j \in J}$ represents explicit weights derived from the least squares optimization; for further context, in the field of time series analysis, refer to~\cite{percival2020spectral}.
Still, the variance of this new estimator remains relatively high for practical applications due to potential strong correlations among the different scales $j$ for $R<\infty$.

To address this limitation, we further leverage the concept of multi-taper \cite{rajala2023fourier, hawat2023estimating}: We move away from relying solely on a single function $f$ by averaging statistics derived from several carefully selected smooth and rapidly decreasing, centered functions $(f_i)_{i \in I}$, called tapers, where $I$ is a finite subset of $\mathbb{N}$. 
This approach leads us to the {\em multi-scale, multi-tapered estimator} of $\alpha$, which serves as a generalization of both previous estimators and is formally defined as:
\[\widehat{\alpha} = d-\sum_{j \in J}\frac{w_j}{\log(R)}\log\left(\sum_{i \in I} \left(\sum_{x \in \Phi_R } f_i(x/R^j)\right)^2 \right).\]
It is worth noting that this estimator can be practically computed with only one realization of the point process. Indeed, its variance is reduced by using several scales and tapers, which contribute in a decorrelated way, making this estimator self-averaging.
Furthermore, it will be proven to be consistent under the same assumptions as the non-tapered estimator. 

\smallskip

To provide a more precise result, we assume Brillinger-mixing for $\Phi$ and $\alpha < d$,  to  derive a multivariate central limit theorem (refer to Theorem \ref{thm_clt}). This theorem  asserts that a vector of individual estimators, each based on smooth and rapidly decreasing centered taper functions $(f_i)_{i \in I}$ at various scales $j \in J$, converges to a zero mean Gaussian vector $(N_{i,j})_{i \in I, j \in J}$ as $R$ tends to infinity:
\[ \left(R^{\frac{\alpha - d}{2} j}\sum_{x \in \Phi_R} f_i(x/R^j)\right)_{i \in I, j \in J} \xrightarrow[R \to \infty]{Law} \sqrt t \, ( N_{i,j})_{i \in I, j \in J}, \]
with an explicit covariance matrix
This central limit theorem is unusual  in that the normalizations vary among the components and exceed the rate of $R^{-dj/2}$ observed in the non-hyperuniform scenario. Thanks to this result, we can establish an asymptotic confidence interval for $\alpha$ (refer to Proposition \ref{prop_ICA}). 

Additionally, through a non-asymptotic deviation inequality stated in Proposition~\ref{prop_biais_var}, we examine the impact of the number of tapers $|I|$. In the non hyperuniform case of $\alpha = 0$,  and with the functions $(f_i)_{i \in I}$ being orthonormal in $L^2(\mathbb{R}^d)$, it is quite straightforward to prove that the asymptotic variance of $\log(R) (\widehat{\alpha} - \alpha)$ scales as $|I|^{-1}$ (see Proposition \ref{prop_var_lim_alpha0}). To address the case of $\alpha > 0$, Proposition \ref{prop_biais_var} considers the set of taper functions $(f_i)_{i \in I}$ defined by the Hermite wavelets, and reveals that the variance still scales as $|I|^{-1}$ in this setting. On the other hand, as it is already well known with the multi-taper technique in the context of univariate time series~\cite{riedel1995minimum, percival2020spectral}, this result  also states that not too many tapers should be used in order to control a small bias.

\smallskip

As our theoretical results are asymptotic and our estimators (calculated in a finite window) demand to set some parameters, like the scales $J$ and  the tapers $f_i$, $i\in I$, we provide some practical recommendations in Section~\ref{sec_prat_imp}. To verify the practical robustness of our approach, numerical benchmarks on simulated point processes are conducted in Section~\ref{sec_sim_sim}.  We then  apply our estimation method to address the conjecture that the hyperuniformity exponent of matched point processes is $\alpha=2$ \cite{andreas2020hyperuniform}, which our numerical study confirms.  Finally, in Section~\ref{sec_prat_imp}, we analyse a real dataset of marine algae from \cite{huang2021circular}, providing new insights on the hyperuniformity phenomenon of this system.

\smallskip

The remaining part  of the paper is organized as follows. Section~\ref{sec_prelim} introduces some basics about point processes and defines the concept of hyperuniformity, including the exponent $\alpha$. Some examples  of hyperuniform and non-hyperuniform point process models are given. Section~\ref{sec_alpha}
 presents our estimator  $\widehat{\alpha}$ and investigates its main properties: well-definiteness, consistency, limiting distribution, asymptotic confidence intervals, choice of tapers based on a bias-variance trade-off. 
In Section~\ref{sec_prat_imp}, we discuss practical recommendations for the implementation of $\widehat{\alpha}$ and we assess its performances by numerical simulations. We then apply our method to various theoretical models of point processes and to some real dataset of marine algae. Finally, Section~\ref{app_proof} gathers all technical proofs of the results presented in Section~\ref{sec_alpha}, while \myappendix{} provides a brief reminder of cumulant measures and Brillinger mixing for point processes. 

The Python code to implement our estimator and reproduce our experiments is available in our online GitHub repository at \url{https://github.com/gabrielmastrilli/Estim_Hyperuniformity}.

\section{Preliminaries}\label{sec_prelim}

In this section, after establishing our notation and presenting some basics related to simple stationary point processes, we define hyperuniformity in both Fourier and spatial domains and introduce the hyperuniformity exponent $\alpha$. We conclude by the presentation of several examples of hyperuniform point processes.

\subsection{General notations}
We consider functions and point processes in Euclidean space $\mathbb{R}^d$ of dimension  $d \geq 1$. For $a, b \in \mathbb{C}^d$, $a.b = \sum_{i = 1}^d a_i \overline{b_i}$ denotes the Hermitian scalar product  between $a$ and $b$, while the associated  norm is denoted by $|a|$. We denote $\bm{i} = \sqrt{-1} \in \mathbb{C}$. The volume of a set $A\subset \mathbb R^d$ is also denoted $|A|$, while for a finite set $I$, the notation $|I|$ stands for its cardinality. The Euclidean ball of radius $R$ is denoted by $B(0, R)$. For $x = (x_1, \dots, x_d) \in \mathbb{R}^d$, $|x|_{\infty}:= \max_{i = 1, \dots, d} |x_i|$ and $|x|_1 = \sum_{i = 1}^d |x_i|$. Finally, for $(a,b) \in \mathbb{R}$, we use the notations $a \vee b := \max(a, b)$ and $a \wedge b:= \min(a, b)$. 

For $1\leq p < \infty$, we denote by $L^{p}(\mathbb{R}^d)$ the space of measurable functions $f: \mathbb{R}^d \to \mathbb{R}$ such that $\|f\|_p^p:=\int_{\mathbb{R}^d} |f(x)|^p dx < \infty$.  
We denote by $L^{\infty}(\mathbb{R}^d)$ the space of functions $f: \mathbb{R}^d \to \mathbb{R}$ such that $\|f\|_{\infty}:= \sup_{x \in \mathbb{R}^d} |f(x)| < \infty$. For $p = 2$, the scalar product between the $L^{2}(\mathbb{R}^d)$ functions $f_1$ and $f_2$ is $\langle f_1, f_2 \rangle = \int_{\mathbb{R}^d} f_1(x) f_2(x) dx$. We adopt the following convention for the Fourier transform of a function $f \in L^{1}(\mathbb{R}^d)$: 
$$\forall k \in \mathbb{R}^d,\quad \mathcal{F}[f](k):= \frac1{(2\pi)^{d/2}} \int_{\mathbb{R}^d} f(x) e^{\bm{i} k.x} dx.$$ 
As usual, the Fourier transform is extended to $L^2(\mathbb{R}^d)$ functions thanks to the Plancherel Theorem~\cite{folland2009fourier}: $\forall f_1, f_2 \in L^2(\mathbb{R}^d)$, $\langle f_1, f_2 \rangle = \langle \mathcal{F}[f_1], \mathcal{F}[f_2] \rangle$.  We say that a function $f: \mathbb{R}^d \to \mathbb{R}$ is in the Schwartz space $\mathcal{S}(\mathbb{R}^d)$ if $f$ is infinitely differentiable and if for all multi-indexes $(\beta_1, \beta_2) \in (\mathbb{N}^d)^2$, then $\sup_{x \in \mathbb{R}^d} |x^{\beta_1} \partial_{\beta_2} f(x)| < \infty$. Finally, a function $f : \mathbb{R}^d \mapsto \mathbb{C}$ is said analytic if $$\forall x_0 \in \mathbb{R}^d, ~ \exists h \in \mathbb{R}^d\setminus\{0\},~\forall x \in ]x_0-h, x_0+h[^d,~f(x) = \sum_{k \in \mathbb{N}^d} a_k(x_0) x^k,$$
where $(a_k(x_0))_{k \in \mathbb{N}^d}$ is a sequence of complex scalars and where the convergence of the series is uniform.

\subsection{Hyperuniform point processes}\label{sec_defs} 
In this section, we briefly review necessary notions and results related to hyperuniform point processes. For a comprehensive introduction to point processes, we recommend consulting the standard two-volume textbook \cite{daley2007introduction, daley2003introduction} or the more concise recent manuscript \cite{baccelli2020random}. Additionally, for a presentation specifically related to hyperuniformity, we suggest referring to the unpublished monograph \cite{coste2021order}. 
For a foundational understanding of power spectra of point processes, consult~\cite[Chapter 5]{bremaud2014fourier}.
 
The set of points configurations in $\mathbb{R}^d$ is defined as:
$$\text{Conf}(\mathbb{R}^d):= \{\phi \subset \mathbb{R}^d|~ \text{For all } K \text{ compact of } \mathbb{R}^d, \text{ then } |\phi \cap K| < \infty\}.$$ 
This set is endowed with the $\sigma$-algebra generated by the mapping $\phi \mapsto |\phi\cap K|$ for all compact sets $K$. A point process $\Phi$ is a random element of $\text{Conf}(\mathbb{R}^d)$. A point process $\Phi$ is called simple if it contains almost surely only distinct points, and it is called stationary if for all $x \in \mathbb{R}^d$, $\Phi + x:= \{y + x|~y \in \Phi\}$ is equal in distribution to $\Phi$. As a consequence, the intensity measure $\rho^{(1)}$ of a stationary point process $\Phi$ (defined for any subset $A$ of $\mathbb{R}^d$ by $\rho^{(1)}(A) = \mathbb{E}(|\Phi \cap A|)$), is proportional to the Lebesgue measure on $\mathbb{R}^d$: $\rho^{(1)} = \lambda dx$. The scalar $\lambda \geq 0$ is called the intensity of the point process. The second  order factorial moment measure   $\rho^{(2)}$ of a simple point process $\Phi$ is a measure on $(\mathbb{R}^d)^2$  defined by
\begin{equation}\label{rho2}
\rho^{(2)}(A_1\times A_2) = \mathbb{E}\Big[\sum_{x, y \in \Phi}^{\neq} \mathbf{1}_{x \in A_1, y \in A_2}\Big], \end{equation}
for all $A_1, A_2$ subsets of $\mathbb{R}^d$. The symbol $\neq$ over the sum means that we consider only distinct points. 
\begin{assumption}\label{ass_rho_leb}
Throughout the paper, we tacitly assume that the point process $\Phi$ is simple, stationary, with an intensity $\lambda > 0$, and that its second-order intensity measure $\rho^{(2)}$ is absolutely continuous with respect to the Lebesgue measure on $\mathbb{R}^d\times \mathbb{R}^d$. This allows us to represent it as follows:
\begin{align}\label{eq_rho_2}
	\rho^{(2)}(dx,dy) = \lambda^{2} g(x - y) dx dy,
	\end{align}
where $g(x)$ is a function on $\mathbb{R}^d$. Additionally, we assume that $g - 1 \in L^{1}(\mathbb{R}^d)$.
\end{assumption}

The function $g : \mathbb{R}^d \to \mathbb{R}$ is known as the pair-correlation function of $\Phi$. With this established, we can proceed to define the {\em structure factor}  of $\Phi$.
This is a function  $S$  defined for any $k\in\mathbb R^d$ as
\begin{equation}\label{defS}
S(k) := 1 + \lambda \int_{\mathbb{R}^d}(g(x) - 1) e^{- \bm{i}k.x} dx.
\end{equation}

This function represents the density of the Bartlett spectral measure of $\Phi$, cf~\cite[Section~8.2]{daley2003introduction}. 
Under Assumption~\ref{ass_rho_leb}, $S$ is a non-negative, bounded and continuous function. Furthermore, by application of the Campbell formula \cite{baccelli2020random} and the Plancherel Theorem \cite{folland2009fourier}, we deduce the following useful property: For all $f_1, f_2 \in L^1(\mathbb{R}^d)\cap L^2(\mathbb{R}^d)$,
\begin{equation}\label{e.prop_campbell}
\operatorname{Cov}\Big[\sum_{x \in \Phi} f_1(x), \sum_{x \in \Phi} f_2(x)\Big] = \lambda \int_{\mathbb{R}^d} \mathcal{F}[f_1](k) \overline{\mathcal{F}[f_2]}(k) S(k) dk.
\end{equation}

We are now in position to define hyperuniformity in the Fourier domain.
\begin{definition}\label{def_HU}
		Under the conditions specified in  Assumption~\ref{ass_rho_leb},  $\Phi$ is said to be hyperuniform in the Fourier domain if $S(0) = 0$.
\end{definition} 
As per Definition \ref{def_HU}, hyperuniformity is connected to the behavior of the structure factor at low frequencies. Thanks to formula~\eqref{e.prop_campbell}, this definition aligns with the conventional understanding of hyperuniformity in the spatial domain, which focuses on the number variance's behavior at large scales.
\begin{definition}\label{def_HU_space}
	$\Phi$ is said to be hyperuniform in the spatial domain if for any compact convex set $W$ of $\mathbb{R}^d$, then $$\operatorname{\operatorname{Var}}[|\Phi \cap r W|] \underset{r \to \infty}{=} o(|r W|).$$
\end{definition} 
The equivalence between these two notions is discussed in \cite{coste2021order}. The ``degree'' of hyperuniformity is often quantified in the Fourier domain, based on the following assumption.

\begin{assumption}\label{ass_S_0}
 Under the conditions outlined in Assumption~\ref{ass_rho_leb}, we additionally assume that the structure factor $S$ scales near the origin as 
\begin{equation}\label{eq_S_0}
    S(k) \underset{|k| \to 0}{\sim} t |k|^{\alpha},
\end{equation} 
where $t > 0$ and $\alpha \geq 0$.
\end{assumption}

The parameter $\alpha$ in  Assumption~\ref{ass_S_0} is the hyperuniformity exponent, which we aim to estimate. Clearly, the point process is classified as hyperuniform (in the Fourier domain) if and only if  $\alpha > 0$.

\begin{remark}\label{rmq_class_HU}
	When $\alpha > 0$, Assumption \ref{ass_S_0} leads to three classes of hyperuniformity, whether $0<\alpha < 1$,  $\alpha = 1$ or $\alpha > 1$, see \cite{torquato2018hyperuniform} and \cite{coste2021order}. These classes are distinguished based on the behavior of the number variance $\operatorname{Var}[|\Phi \cap B(0, R)|]$ as $R \to \infty$. Specifically:
	\begin{enumerate}
		\item[(I)] For $0 < \alpha < 1$, we have $\operatorname{Var}[|\Phi \cap B(0, R)|] = O(R^{d - \alpha})$.
		\item[(II)] For $\alpha = 1$, the variance scales as $\operatorname{Var}[|\Phi \cap B(0, R)|] = O(R^{d - 1}\log(R))$.
		\item[(III)] For $\alpha > 1$, the variance behaves as $\operatorname{Var}[|\Phi \cap B(0, R)|] = O(R^{d - 1})$.
	\end{enumerate}
\end{remark}

To wrap up this brief introduction to hyperuniformity in point processes, it's important to note that throughout the paper 
we assume that our point process $\Phi$ satisfies Assumptions~\ref{ass_rho_leb} and~\ref{ass_S_0}. To simplify our analysis without compromising generality  we also normalise the intensity to assume  $\lambda=1$. We detail in Section~\ref{sec_bench} how to rescale in practice the observed point patterns to match this theoretical normalisation.

\subsection{Examples of hyperuniform and non-hyperuniform point processes}\label{sec_example}

We review below standard point process models, highlighting their hyperuniform or non-hyperuniform property.
We also show how we can construct hyperuniform processes with a prescribed exponent $\alpha$.
 
As already pointed out, the homogeneous Poisson point process on $\mathbb{R}^d$ is clearly not hyperuniform. Indeed, for a stationary Poisson point process $\Phi$ with intensity $\lambda > 0$, $\operatorname{Var}[|\Phi\cap rW|] = \lambda |rW|$ for any compact convex set $W$ of $\mathbb{R}^d$, so that the property in Definition~\ref{def_HU_space} is not satisfied. 
More generally, point process models exhibiting weak dependencies are not hyperuniform. 
For instance, Gibbs point processes with short-range interactions have been proven to be non hyperuniform in \cite{dereudre2023hyperuniformity}. Most Cox processes \cite{chiu2013stochastic, moller2003statistical} 
are not hyperuniform either, because for these models we typically have $\int_{\mathbb{R}^d}(g(x) -1) dx >0$, like for the subclass of Newman-Scott processes, ruling out the hyperuniform property $S(0)=0$, see~\eqref{defS}. On the other hand, consider a stationary determinantal point process \cite{macchi1975coincidence, soshnikov2000determinantal}  with correlation kernel $K: \mathbb{R}^d\times \mathbb{R}^d \to \mathbb{C}$ and denote $K_0(x -y):=|K(x, y)|$. Then it is hyperuniform if and only if $\|K_0\|_2^2 = K_0(0)$. This implies that 
Gaussian, Whittle-Mat\'ern, and Cauchy determinantal point process models \cite{lavancier2015determinantal} are not hyperuniform.  

Determinantal point processes provide nonetheless a first simple class of hyperuniform point processes, as long as their kernel satisfies $\|K_0\|_2^2 = K_0(0)$. This corresponds to the family of ``most repulsive'' determinantal point processes, as studied in \cite{Biscio_2016} and \cite{moller2021couplings}.  In particular, Jinc  \cite{soshnikov2002gaussian} and Ginibre \cite{ginibre1965statistical}  point processes are hyperuniform, with exponent $\alpha = 1$ and $\alpha = 2$, respectively.

In the class of  Gibbs point processes, important examples of long-range interactions models exhibit hyperuniformity. This is notably the case of the sine-beta process \cite{valko2009continuum, dereudre2021dlr}, one-dimensional and two-dimensional Coulomb gases \cite{kunz1974one, leble2021two}, and the one-dimensional Riesz gas \cite{boursier2022decay}.

A stationary lattice process  is, in a way, an extreme example of a hyperuniform process in the sense of Definition~\ref{def_HU_space} \cite{ torquato2018hyperuniform, ghosh2017fluctuations, coste2021order}. It corresponds to the process $\mathbb{Z}^d+ U$, where $U$ follows a uniform distribution on $[-1/2, 1/2]^d$. 
This process does not satisfy Assumption~\ref{ass_rho_leb} but it is possible to build upon it to provide a wide class of more regular hyperuniform models. This idea is exploited in \cite{andreas2020hyperuniform}, where hyperuniform processes are obtained by the thinning of a homogeneous Poisson point process so that the retained points match the stationary lattice. Their hyperuniformity exponent, however, is unknown, even if it is thought to be $\alpha=2$. In our simulation study in Section~\ref{sec_sim_match}, we question this conjecture thanks to our estimator. 

Another construction based on the stationary lattice leads to the model of cloaked and perturbed lattices~\cite{gabrielli2004point, kim2018effect, klatt2020cloaking}, that allows to construct hyperuniform processes with a prescribed exponent $\alpha$. This point process is $\{x + U + U_x + \xi_x|~x \in~\mathbb{Z}^d\}$, where $U, (U_x)_{x \in \mathbb{Z}^d}, (\xi_x)_{x \in \mathbb{Z}^d}$  are independent random variables, called perturbations. The random variables $U$ and $(U_x)_{x \in \mathbb{Z}^2}$, that are i.i.d. and uniform on $[-1/2, 1/2]^d$, ensure that the process satisfies Assumption~\ref{ass_rho_leb}  \cite{klatt2020cloaking}. The random variables $(\xi_x)_{x \in \mathbb{Z}^d}$ are  in turn i.i.d. with characteristic function $\varphi$ satisfying  $1 - |\varphi(k)|^2 \sim~c |k|^{\alpha}$ as $|k| \to 0$, where $c > 0$ and $\alpha > 0$. They ensure that the process is hyperuniform with exponent $\alpha$~\cite{kim2018effect, gabrielli2004point}. These processes serve as a benchmark in our simulation study of  Section~\ref{sec_sim_pert_latt}.

\section{Estimating the hyperuniformity exponent~$\alpha$} \label{sec_alpha}
	We first introduce in Section~\ref{wavlets}  truncated wavelet transforms of point processes, that constitute the basic statistic whose variance scales as a function of $\alpha$. Based upon these transforms, we then present in Section~\ref{sec_construc} our multi-scale, multi-taper estimator of $\alpha$, with some guarantees on its well-defined nature. Section~\ref{sec_asymp_prop}
investigates its asymptotic properties, including its limiting distribution. This allows us to derive in Section~\ref{sec_conf_int} asymptotic confidence intervals for $\alpha$. We finish in Section~\ref{sec_multi_taper} by inspecting the bias and variance trade-off in the choice of tapers  involved in our estimator.

\subsection{Truncated wavelet transforms of point processes}\label{wavlets}
An effective estimator for the exponent $\alpha$ in the structure factor  $S(k)$ given by~\eqref{eq_S_0} should consider the relationship between the frequency regime $|k| \to 0$ and the size of the observation window $[-R,R]^d$ as $R$ tends to infinity, within which these frequencies can be observed.  
Our primary tools for addressing this are the following general linear statistics:
\begin{definition}\label{def_T_j}
	Let $f \in \mathcal{S}(\mathbb{R}^d)$, $R > 1$ and $j > 0$. The $R$-truncated wavelet transform of the point process $\Phi$ at scale $j$ associated to $f$ is defined by
	\begin{equation}\label{eq_wav}
		T_j(f, R)=T_j(f, R;\Phi):= \sum_{x \in \Phi} \mathbf{1}_{[-R, R]^d}(x) f\left(\frac{x}{R^j}\right)= \sum_{x \in \Phi_R}  f\left(\frac{x}{R^j}\right),
	\end{equation}
	denoting  $\Phi_R:=\Phi\cap [-R,R]^d$. 
\end{definition}
The scaling factor $R^j$ in \eqref{eq_wav} corresponds to the observation scale of the point process $\Phi$ in the spatial domain, while $R^{-j}$ represents the observation frequency in the Fourier domain. Given Definitions \ref{def_HU} and \ref{def_HU_space}, it is logical to assess the hyperuniformity of $\Phi$  focusing on both large scales and low frequencies by studying the behavior of the truncated wavelet transforms $T_j(f, R)$ as $R$ approaches infinity.

Our first key result, stated in the following proposition, demonstrates this  relevance by showing that  the scale of variance of these transforms is directly linked to the value of the exponent $\alpha$, particularly for all values $\alpha\ge0$. The proof is a direct corollary of Lemma \ref{lemma_cov}, which is postponed to Section~\ref{sec_clt}

\begin{proposition}\label{prop_variance}  
Let $\Phi$ satisfy Assumptions~\ref{ass_rho_leb} and~\ref{ass_S_0}, with intensity $\lambda=1$.
 Let $f \in \mathcal{S}(\mathbb{R}^d)$, $R > 1$ and $0 < j < 1$. Then:
	$$\operatorname{Var}\Big[T_j(f, R)\Big] \underset{R \to \infty}{\sim}  R^{(d - \alpha)j} \int_{\mathbb{R}^d} |\mathcal{F}[f](k)|^2 t |k|^{\alpha} dk.$$
\end{proposition}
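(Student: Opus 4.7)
The plan is to apply the Campbell--Plancherel identity \eqref{e.prop_campbell} to the linear statistic $T_j(f,R)$, but only after first replacing the truncated version of the taper, $f(x/R^j)\mathbf{1}_{[-R,R]^d}(x)$, by the untruncated scaled taper $f(x/R^j)$. Since $0<j<1$, the effective width $R^j$ of the taper is asymptotically much smaller than the truncation width $R$, so the rapid decay of $f\in\mathcal{S}(\mathbb{R}^d)$ should make this replacement asymptotically harmless; the real work is then to extract the correct asymptotic from the resulting Fourier integral using Assumption~\ref{ass_S_0}.

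Concretely, I would set $\tilde T_j(f,R):=\sum_{x\in\Phi}f(x/R^j)$ and $r_R(x):=f(x/R^j)\mathbf{1}_{|x|_\infty>R}$, so that $T_j(f,R)=\tilde T_j(f,R)-\sum_{x\in\Phi}r_R(x)$. The triangle inequality in $L^2(\mathbb{P})$ gives
$$\bigl|\sqrt{\operatorname{Var}[T_j(f,R)]}-\sqrt{\operatorname{Var}[\tilde T_j(f,R)]}\bigr|\le \sqrt{\operatorname{Var}\bigl[\textstyle\sum_{x\in\Phi}r_R(x)\bigr]}.$$
Bounding the right-hand variance via \eqref{e.prop_campbell} and $\|S\|_\infty<\infty$ yields $\operatorname{Var}[\sum r_R(x)]\le \|S\|_\infty\|r_R\|_2^2$, and the change of variables $y=x/R^j$ combined with the Schwartz estimate $|f(y)|\le C_N(1+|y|)^{-N}$ gives
$$\|r_R\|_2^2 = R^{jd}\!\int_{|y|_\infty>R^{1-j}}\!|f(y)|^2\,dy = O\bigl(R^{jd-(1-j)(N-d)}\bigr),$$
which, since $1-j>0$, is $o(R^{(d-\alpha)j})$ for $N$ chosen large enough. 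Thus the variance of $T_j(f,R)$ and of $\tilde T_j(f,R)$ have the same asymptotic behavior (provided we verify the right-hand side is nontrivial, i.e.\ $f\not\equiv 0$).

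For $\tilde T_j(f,R)$ I would apply \eqref{e.prop_campbell} directly, use $\mathcal{F}[f(\cdot/R^j)](k)=R^{jd}\mathcal{F}[f](R^jk)$ and change variables $u=R^jk$ to get
$$\operatorname{Var}[\tilde T_j(f,R)] = R^{jd}\!\int_{\mathbb{R}^d}|\mathcal{F}[f](u)|^2\,S(u/R^j)\,du.$$
It then remains to show $R^{j\alpha}\int |\mathcal{F}[f](u)|^2 S(u/R^j)\,du \to t\int |\mathcal{F}[f](u)|^2 |u|^\alpha\,du$, which follows by dominated convergence after splitting the integral at $|u|=\varepsilon R^j$: on $\{|u|\le\varepsilon R^j\}$, Assumption~\ref{ass_S_0} provides the dominant $2t|u|^\alpha|\mathcal{F}[f](u)|^2$, which is integrable because $\mathcal{F}[f]$ is Schwartz; on $\{|u|>\varepsilon R^j\}$, the bound $R^{j\alpha}\|S\|_\infty$ is absorbed by the Schwartz decay of $\mathcal{F}[f]$ by choosing a sufficiently high polynomial order. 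The main obstacle is precisely this last step: the $R^{j\alpha}$ prefactor grows with $R$, so the bound near $u=0$ requires the \emph{exact} power-law behavior of $S$ from Assumption~\ref{ass_S_0} rather than mere continuity, while the tail estimate requires exploiting Schwartz decay at a sufficiently high order to beat $R^{j\alpha}$.
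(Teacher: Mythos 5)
Your proof is correct and follows essentially the same route as the paper's (which obtains the proposition from Lemma~\ref{lemma_cov} specialized to $f_1=f_2=f$, $j_1=j_2=j$): remove the truncation using the Schwartz decay of $f$ beyond $R^{1-j}$, apply the Campbell--Plancherel identity \eqref{e.prop_campbell}, and pass to the limit in the rescaled frequency integral by splitting at $|u|\asymp R^{j}$, using the power law of $S$ near zero on one piece and the boundedness of $S$ against the rapid decay of $\mathcal{F}[f]$ on the other. The only cosmetic difference is that you discard the truncation at the level of the random variables via the reverse triangle inequality for $\sqrt{\operatorname{Var}[\cdot]}$, whereas the paper does it inside the covariance integral via Cauchy--Schwarz on the Fourier transforms; both reduce to the same estimate on $\|f\,\mathbf{1}_{\mathbb{R}^d\setminus[-R^{1-j},R^{1-j}]^d}\|_2$.
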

The relation formulated above has already been observed and exploited in the literature for some specific models of hyperuniform point processes; see, for example, \cite[Theorem~3]{soshnikov2002gaussian} for hyperuniform determinantal point processes in dimension~1 and 
\cite[Lemma~5.4]{nazarov2011fluctuations} for the point process of
zeros of {\em Gaussian Analytic Functions} (GAF's) for \(d=2\) and
\(\alpha = 4\). It has also been confirmed by numerical simulations for other models in the thesis \cite{brochard2022wavelet}. Our result in Proposition~\ref{prop_variance} provides an explicit asymptotic equivalent of the variance of \(T_j(f, R)\) in the general setting of an isotropic power
law scaling of the structure factor near the origin.

Before transforming the above result into an estimator of $\alpha$,  which we will present in the next section, we would  like to make three remarks: the significance of restricting scales $j$,  the connections to previous works on estimating the structure factor $S$, and finally the use of wavelet terminology.

\begin{remark}\label{rmk_finite_size_effect}
The restriction of scales $j$ to $0 < j < 1$ in Proposition~\ref{prop_variance} holds significance, for the purpose of estimating $\alpha$. Beyond $j > 1$, border effects start to emerge. Specifically, when $j > 1$, the variance of $T_j(f, R)$ converges asymptotically to the number variance $\operatorname{Var}\left[\sum_{x \in \Phi} \mathbf{1}_{[-R, R]^d}(x) f(0)\right]$ as $R$ tends to infinity, which does not necessarily depend on $\alpha$, see Remark~\ref{rmq_class_HU}. In Section~\ref{app_finite_size_eff}, we offer a brief proof of this phenomenon, replacing the indicator function $\mathbf{1}_{[-R, R]^d}$ with a smooth, compactly supported function.
\end{remark}

\begin{remark}\label{rem_classical_scatering-method}
Several estimators for the structure function $S$ have been developed, as documented in \cite{hawat2023estimating, klatt2022genuine, rajala2023fourier, grainger2023spectral, yang2024fourier}. One of the most well-known is the ``scattering intensity''~\cite{hawat2023estimating, klatt2022genuine, torquato2018hyperuniform}, which is defined for all frequencies $k \in \mathbb{R}^d\setminus\{0\}$ as
\begin{equation}\label{eq_scat_intens}
	\widehat{S}_{SI}^R(k) := \frac1{|[-R, R]^d|^2}\left|\sum_{x \in \Phi_R} e^{-\bm{i} k.x}\right|^2.
\end{equation} 
The estimation of $\alpha$ in previous studies typically involves a two-step asymptotic approach: first, taking the limit as $R \to \infty$ of $\widehat{S}_{SI}^R(k)$, and then as $|k| \to 0$. 
One way to unify and rigorously address this double asymptotic approach is to consider the limit, as $R\to\infty$, of $\widehat{S}_{SI}^R(k_0/R^j)$, where $k_0 \in \mathbb{R}^d$ is a fixed direction and $j > 0$. The latter is nothing else that $|T_j(f,R)|^2$ in Definition~\ref{def_T_j} associated with the (non-Schwartz) function $f(x) = e^{-\bm{i} k_0.x}$, up  to the normalization by $|[-R, R]^d|^2$. Proposition~\ref{prop_variance} 
supports this idea with the additional advantage of substituting $x \mapsto e^{-\bm{i} k_0.x}$ with a smooth and well-localized function $f$ in \eqref{def_T_j},  both in the Fourier and spatial domains. The concept of using such functions $f$, as we will discuss in the upcoming section, draws inspiration from the principles of tapers in spectral analysis, as highlighted in works such as \cite{percival2020spectral,rajala2023fourier, hawat2023estimating}.
\end{remark}

\begin{remark}
	The term ``wavelet transform'' for $T_j(f, R)$ originates from the fact that we can interpret $\sum_{x \in \Phi_R} f(x/R^j) = \int \mathbf{1}_{[-R, R]^d}(x)f(R^{-j}(x - 0)) \Phi(dx)$ as an $f$-wavelet coefficient (with shift zero and scale $R^j$) of the point process  $\Phi$; cf.~\cite{brochard2022wavelet}. It can be seen as an  unbiased estimator of the wavelet coefficient $d_{j, 0}(f)$ of the function (signal) $x\mapsto \mathbf{1}_{[-R, R]^d}(x)$ in $L^2(\mathbb{R}^d)$, where $ d_{j, l}:= \int_{\mathbb{R}^d} f(R^{-j}(x+l)) \mathbf{1}_{[-R, R]^d}(x) dx$, for all $j \in \mathbb{R}$ and all $l \in \mathbb{R}^d$; cf.~\cite{mallat1999wavelet,taleb2019wavelet, brochard2022wavelet}.  Indeed, by stationarity of $\Phi$, $\mathbb{E}[T_j(f, R)] = d_{j, 0}(f)$ for all $j \in \mathbb{R}$. Additionally, well-localized behavior of $f$ in both spatial and Fourier domains are typical characteristics of mother wavelets. For a more comprehensive understanding of wavelet analysis in point processes, we recommend consulting seminal papers such as \cite{brillinger1997some} as well as recent works like~\cite{taleb2019wavelet, wav_sp_cohen_ta,brochard2022wavelet}.
\end{remark}

\subsection{Multi-scale, multi-taper estimator}\label{sec_construc}
The concept behind constructing an estimator for $\alpha$, as outlined in our Introduction, involves a linear regression of the logarithm of the square of the wavelet transforms \eqref{eq_wav}, supported by the result in Proposition \ref{prop_variance}. This method has been previously utilized in univariate time series to estimate the long-memory exponent, which in our context is akin to the exponent $\alpha$ (refer to a survey on wavelet and spectral methods for estimating the long-memory exponent in time series \cite{fay2009estimators}). Additionally, see~\cite{percival2020spectral} for a general overview of spectral density estimation in time series and \cite{percival2000wavelet} for wavelet methods. The specifics are elaborated below.

According to Proposition~\ref{prop_variance}, for a Schwartz function $f$ with zero integral, we heuristically have $T_j(f, R)^2 \sim R^{(d - \alpha)j}$ as $R \to \infty$. Therefore, we expect that for all $j \in (0, 1)$, the expression $\log(T_j(f, R)^2)/\log(R)$ behaves as $(d - \alpha)j$.
Considering the multi-taper approach~\cite{percival2020spectral, rajala2023fourier}, we can extend our analysis to multiple wavelet transforms. Specifically, for a finite discrete subset $I$ and a finite family of Schwartz functions $(f_i)_{i \in I}$ with zero integral, we anticipate the following linear scaling law:
\begin{equation}\label{eq_heur_scaling}
	\forall j \in (0, 1),~\frac{\log\left(\frac{1}{|I|}\sum_{i \in I}T_j(f_i, R)^2\right)}{\log(R)} \sim  (d - \alpha)j.
\end{equation}
Subsequently, given a finite discrete subset $J \subset (0, 1)$ of scales, we define the estimator $\widehat{\alpha}$ of $\alpha$ as the slope obtained from the least squares procedure:
$$(\widehat{\alpha}, \widehat{b}) = \underset{a, b \in \mathbb{R}^d}{\operatorname{\arg\min}} \sum_{j \in J} \left(\frac{\log\left(\frac{1}{|I|}\sum_{i \in I}T_j(f_i, R)^2\right)}{\log(R)} - (d -a) j - b\right)^2.$$
This leads to the explicit solution:
\begin{equation}\label{eq_LQ_al}
	\widehat{\alpha} = d - \displaystyle\sum_{j \in J} \frac{\hat w_j}{\log(R)}\log\left(\frac{1}{|I|}\sum_{i \in I}T_j(f_i, R)^2\right),
\end{equation}
with the weights:
\begin{equation}\label{eq_weights}
	  \forall j \in J,~\hat w_j = \frac{|J|j - \sum_{j' \in J}j'}{|J| \left(\sum_{j' \in J} {j'}^2\right) - \left(\sum_{j' \in J} j'\right)^2}.
\end{equation}
These  weights define a natural estimator suitable for our simulations. However, a more general definition is presented below, with weights $(w_j)_{j \in J}$ satisfying two minimal conditions, as explained further. The first one is $\sum_{j \in J}  w_j = 0$, as verified by \eqref{eq_weights}, and results in a slight simplification of the expression in \eqref{eq_LQ_al} with the elimination of the normalization by $|I|$. The second one, also satisfied by \eqref{eq_weights}, is $\sum_{j \in J} j w_j = 1$. It plays a fundamental role in ensuring the consistency of the estimator $\widehat{\alpha}$, as detailed at the beginning of Section~\ref{sec_asymp_prop}.

\begin{definition}\label{def_mult_scale_est}
	Let $R >0$, $J\subset(0,1)$ be a finite subset (of scales),  $(f_i)_{i \in I}$ be a finite family of Schwartz functions (tapers). We assume that at least one function of the family of wavelets $(f_i)_{i \in I}$ is analytic and takes at least one non zero value. We consider real scalar weights $(w_j)_{j \in J}$ satisfying conditions:
	\begin{align}
	\sum_{j \in J} w_j &= 0 \label{eq_sum_wj_j_0},\\
	\sum_{j \in J} j w_j &= 1 \label{eq_sum_wj_j_1}.
	\end{align}
We define the $(I,J)$-estimator of $\alpha$ 
 (which tacitly  depends also on the weights $(w_j)_{j \in J}$)  by: 
 $$\widehat{\alpha}(I, J, R) = \widehat{\alpha}(I, J, R;\Phi):=\left(d-\sum_{j \in J}\frac{w_j}{\log(R)}\displaystyle\log\left(\sum_{i \in I} T_j(f_i, R)^2\right)\right) \mathbf{1}_{|\Phi_R| \geq 1}.$$
\end{definition}
The indicator function $\mathbf{1}_{|\Phi_R| \geq 1}$ and the assumption of analyticity of at least one function $f_i$ guarantee that $\widehat{\alpha}(I, J, R)$ is well defined. Specifically, under this setting, $T_j(f_i, R;\Phi)$ is non-zero almost surely, for all $j\in J$, so that the logarithmic term is well defined. This is because, on the one hand, the analytic function $f_i$ only has Lebesgue-null sets of zeros (as long as it is not identically zero), and on the other hand,  by stationarity, the point process~$\Phi$, almost surely,  has no points on any fixed Lebesgue-null set. This forms the core of the argument supporting the following statement, the proof of which is deferred to Section~\ref{app_prop_no_atom_0}.

\begin{proposition}\label{prop_no_atom_0}
Let $\Phi$ satisfy Assumptions~\ref{ass_rho_leb} and~\ref{ass_S_0}.  Let  $f \in \mathcal{S}(\mathbb{R}^d)$ be an analytic function taking at least one non-zero value. Then, $$\mathbb{P}(T_j(f, R;\Phi) = 0, |\Phi_R| \geq 1) = 0.$$
\end{proposition}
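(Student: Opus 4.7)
The plan is to control $\mathbb{P}(T_j(f, R; \Phi) = 0, |\Phi_R| \geq 1)$ by $\mathbb{E}[|\Phi_R| \mathbf{1}_{T_j = 0}]$ and show the latter vanishes. Since $\Phi$ has Lebesgue intensity (Assumption~\ref{ass_rho_leb} with $\lambda = 1$), the Campbell--Mecke formula yields
\[
\mathbb{E}\bigl[|\Phi_R|\, \mathbf{1}_{T_j = 0}\bigr] \;=\; \mathbb{E}\biggl[\sum_{x \in \Phi_R} \mathbf{1}_{T_j(f, R; \Phi) = 0}\biggr] \;=\; \int_{[-R, R]^d} \mathbb{P}_x\bigl(T_j(f, R; \Phi) = 0\bigr)\, dx,
\]
where $\mathbb{P}_x$ is the Palm distribution at $x$. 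By stationarity, $\Phi$ under $\mathbb{P}_x$ is distributed as $\Psi + x$, where $\Psi$ has law $\mathbb{P}_0$ and therefore $0 \in \Psi$ almost surely. Singling out the point at $0$, for $x \in [-R, R]^d$,
\[
T_j(f, R; \Phi) \stackrel{d}{=} G_\Psi(x) := f(x/R^j) + \sum_{\substack{y \in \Psi \setminus \{0\} \\ y + x \in [-R, R]^d}} f\bigl((y + x)/R^j\bigr).
\]
Tonelli's theorem then reduces the problem to showing that for $\mathbb{P}_0$-almost every $\Psi$, the set $\{x \in [-R, R]^d : G_\Psi(x) = 0\}$ has Lebesgue measure zero.

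Since $\Psi$ is a.s.\ locally finite, $\Psi \cap [-2R, 2R]^d$ is a.s.\ finite, and the boundaries of the constraints $\{y + x \in [-R, R]^d\}$ (indexed by such $y$) partition $[-R, R]^d$, up to a Lebesgue-null set, into finitely many open cells. On each such cell $U$, the set of contributing $y \in \Psi \setminus \{0\}$ is constant, say $\{y_1, \ldots, y_m\}$, and $G_\Psi\big|_U$ coincides with the real-analytic function $g(x) = f(x/R^j) + \sum_{i=1}^m f((y_i + x)/R^j)$ on all of $\mathbb{R}^d$. A classical consequence of the identity theorem is that a non-zero real-analytic function on $\mathbb{R}^d$ has zero set of Lebesgue measure zero, so the argument reduces to verifying that $g \not\equiv 0$.

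The main technical step is this non-vanishing property, which I would establish via Fourier analysis. Using the paper's convention,
\[
\mathcal{F}[g](\xi) = R^{jd}\, \mathcal{F}[f](R^j \xi)\, P(\xi), \qquad P(\xi) := 1 + \sum_{i=1}^m e^{-\bm{i}\, y_i \cdot \xi}.
\]
The trigonometric polynomial $P$ is real-analytic on $\mathbb{R}^d$ and satisfies $P(0) = m + 1 > 0$, hence its zero set has empty interior. On the other hand, the hypothesis that $f \in \mathcal{S}(\mathbb{R}^d)$ takes a non-zero value gives $f \not\equiv 0$, whence $\|\mathcal{F}[f]\|_2 = \|f\|_2 > 0$ by Plancherel, and continuity of $\mathcal{F}[f]$ ensures that $\{\xi : \mathcal{F}[f](\xi) \neq 0\}$ is a non-empty open set. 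Its intersection with the open complement of $P^{-1}(0)$ is non-empty, showing $\mathcal{F}[g] \not\equiv 0$, and therefore $g \not\equiv 0$. Summing the resulting Lebesgue-null zero sets over the finitely many cells gives $|\{x \in [-R, R]^d : G_\Psi(x) = 0\}| = 0$ a.s., which completes the proof.
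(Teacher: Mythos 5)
Your proof is correct and follows essentially the same route as the paper's: a Campbell--Mecke reduction to the Palm distribution, a decomposition of the observation window into cells on which the truncated sum agrees with a genuine real-analytic function, and the Fourier factorization $\mathcal{F}[g](\xi) = R^{jd}\mathcal{F}[f](R^j\xi)P(\xi)$ to rule out $g \equiv 0$. The only differences are in execution — you invoke directly that a non-trivial real-analytic function has a Lebesgue-null zero set and that $P^{-1}(0)$ has empty interior, whereas the paper argues by contradiction via Lemma~\ref{lemma_ana} — and your handling of these points is, if anything, slightly more careful.
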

Certainly, requiring analyticity of $f$ is more than enough to ensure the above conclusion.  However, finding a less restrictive assumption that would rule out cases where $T_j(f, R;\Phi)$ becomes zero for certain configurations of $\Phi$ with non-zero probability is not straightforward.

\subsection{Asymptotic properties}\label{sec_asymp_prop}

In this section, we provide assumptions under which the estimator $\widehat{\alpha}(I, J, R)$ given in 
Definition~\ref{def_mult_scale_est} converges in probability to $\alpha$ as $R \to \infty$. An easy yet key observation, relying on the normalization \eqref{eq_sum_wj_j_1} of the weights $w_j$, is that the estimator can be decomposed as follows:
\begin{equation}\label{eq_hat_a-a}\widehat{\alpha}(I, J, R) =\alpha + \epsilon(I, J, R),
\end{equation}
where the remainder term $\epsilon(I, J, R)$ is given by
\begin{align}\label{eq_decom_al}
\epsilon(I, J, R) = \sum_{j \in J} \frac{w_j}{\log(R)} \log\left(\sum_{i \in I} R^{(\alpha - d)j} T_j(f_i, R)^2\right)\mathbf{1}_{|\Phi_R| \geq 1} - \alpha \mathbf{1}_{|\Phi_R| = 0}.
\end{align}
To ensure consistency, it is now sufficient to assume that with probability 1:
$$\log\left(\sum_{i\in I} R^{(\alpha - d)j} T_j(f_i, R)^2\right) = o(\log(R)),$$
which, notably, would result from the tightness of the random variables on the left-hand side above, uniformly in $R$. The asymptotic variance of $T_j(f_i, R)$, as given in Proposition~\ref{prop_variance}, reduces this issue to ensuring that the random variables $R^{(\alpha - d)j} T_j(f_i, R)^2$ do not concentrate around 0 as $R\to\infty$.
In this context, we propose two assumptions: (i) a less restrictive one, albeit not too explicit,  involves assuming that these variables converge in distribution to {\em some} distribution without an atom at 0, as formulated in Theorem~\ref{thm_cv_alpha}, or (ii) a mixing assumption for $\Phi$, specifically Brillinger mixing, that implies a joint central limit theorem for these random variables, as formulated in Theorem~\ref{thm_clt}, ensuring in particular the previous setting.
The former approach (i) has the advantage of being applicable in a larger setting when $\alpha \ge d$ and the variance does not increase to infinity. As discussed further in Remark~\ref{rem_compar_cond}, this happens for some specific examples excluded from the second approach (ii). 
On the other hand, the multivariate central limit theorem in approach (ii) explicitly provides the asymptotic error law of $\log(R)(\widehat{\alpha}(I, J, R)-\alpha)$, see Corollary \ref{cor_ic_mult_scala_est_taper}, and allows for the construction of confidence intervals; see Section~\ref{sec_conf_int}.

Our minimal assumptions for the consistency of $\widehat{\alpha}(I, J, R)$, akin to the first framework described above, are formulated in the following result, whose proof is postponed to Section~\ref{app_cv}.
\begin{theorem}\label{thm_cv_alpha}
Let $\Phi$ satisfy Assumptions~\ref{ass_rho_leb} and~\ref{ass_S_0}, with intensity $\lambda=1$.  Suppose that $\Phi(\mathbb{R}^d) = \infty$ with probability~1. 
Let $J\subset(0,1)$ be a finite subset and $(f_i)_{i \in I}$ be a finite family of Schwartz functions, i.e., $f_i \in \mathcal{S}(\mathbb{R}^d)$, such that $\int_{\mathbb{R}^d} f_i(x)dx = 0$. We assume that for each $j \in J$, there exists $i_j \in I$ such that $R^{\frac{\alpha-d}2 j}T_j(f_{i_j}, R)$ converges in distribution to a random variable $X_j$ with no atom at $0$, i.e., $\mathbb{P}[X_j = 0] = 0$. Then $\widehat{\alpha}(I, J, R)$ converges in probability to $\alpha$ as $R$ tends to infinity; in symbols:
$\widehat{\alpha}(I, J, R) \xrightarrow[R \to \infty]{\mathbb{P}} \alpha$.
\end{theorem}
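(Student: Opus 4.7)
The natural plan is to work directly with the decomposition \eqref{eq_hat_a-a}-\eqref{eq_decom_al} supplied by the authors and show that the remainder $\epsilon(I,J,R) \to 0$ in probability. Set $Y_j(R) := \sum_{i\in I} R^{(\alpha-d)j}T_j(f_i,R)^2$. The strategy reduces to two separate claims: (a) the term $\alpha\mathbf{1}_{|\Phi_R|=0}$ is negligible, and (b) for every fixed $j\in J$, one has $\log Y_j(R)/\log R \to 0$ in probability. Claim (a) is immediate since $\Phi(\mathbb{R}^d)=\infty$ a.s.\ implies $|\Phi_R|\to\infty$ a.s., so $\mathbf{1}_{|\Phi_R|=0}\to 0$ a.s. For (b), I would prove tightness from above and a lower anti-concentration bound for $Y_j(R)$, as $\log Y_j(R)/\log R\to 0$ in probability as soon as $Y_j(R)$ is bounded in probability and bounded away from $0$ in probability.

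For the upper tightness, I would bound $\mathbb{E}[Y_j(R)]$. Writing $\mathbb{E}[T_j(f_i,R)^2]=\operatorname{Var}[T_j(f_i,R)]+\mathbb{E}[T_j(f_i,R)]^2$, the variance is exactly $\sim R^{(d-\alpha)j}\int|\mathcal{F}[f_i]|^2 t|k|^\alpha dk$ by Proposition \ref{prop_variance}. For the squared mean, a change of variable gives $\mathbb{E}[T_j(f_i,R)]=R^{dj}\int_{[-R^{1-j},R^{1-j}]^d}f_i(y)dy$, and since $\int_{\mathbb R^d} f_i=0$ and $f_i\in\mathcal S(\mathbb R^d)$, the tail integral decays faster than any polynomial in $R^{1-j}$, making this term negligible compared to $R^{(d-\alpha)j/2}$. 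Hence $\mathbb{E}[Y_j(R)]$ converges to a finite constant, and Markov's inequality yields tightness of $Y_j(R)$, so $(\log Y_j(R))^+/\log R \to 0$ in probability.

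For the lower bound, this is where the hypothesis about an $i_j\in I$ such that $R^{(\alpha-d)j/2}T_j(f_{i_j},R)$ converges in distribution to some $X_j$ with $\mathbb{P}[X_j=0]=0$ is essential. Squaring and using the continuous mapping theorem, $R^{(\alpha-d)j}T_j(f_{i_j},R)^2\Rightarrow X_j^2$, with $X_j^2>0$ a.s. Consequently, for every $\eta>0$ there is a $\delta>0$ with $\mathbb{P}(X_j^2<\delta)<\eta$, so by the Portmanteau lemma $\mathbb{P}(Y_j(R)<\delta)\le \mathbb{P}(R^{(\alpha-d)j}T_j(f_{i_j},R)^2<\delta)<2\eta$ for $R$ large enough. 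Therefore $(\log Y_j(R))^-/\log R\to 0$ in probability. Combining the two bounds on $\log Y_j(R)/\log R$, summing over the finite family of scales $j\in J$ with bounded weights $w_j$, and adding the negligible indicator term, one concludes $\epsilon(I,J,R)\to 0$ in probability.

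The main conceptual obstacle is really only the lower bound: without the no-atom-at-$0$ assumption, $T_j(f_{i_j},R)$ could conceivably accumulate mass near zero at a rate faster than any polynomial in $R$ while still being $L^2$-controlled, which would make $\log Y_j(R)/\log R$ diverge to $-\infty$. The hypothesis neatly rules this out by transferring the problem to a fixed limit law, after which the argument is routine. Everything else is soft: a variance estimate from Proposition \ref{prop_variance}, Markov's inequality, and the continuous mapping theorem.
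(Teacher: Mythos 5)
Your proposal is correct and follows essentially the same route as the paper: the decomposition \eqref{eq_hat_a-a}--\eqref{eq_decom_al}, the a.s.\ vanishing of the indicator term, an upper tail bound for $\sum_i R^{(\alpha-d)j}T_j(f_i,R)^2$ via Markov's inequality together with Proposition~\ref{prop_variance}, and a lower anti-concentration bound transferred from the no-atom limit law $X_j^2$ of the distinguished taper $f_{i_j}$. The only (welcome) extra care in your write-up is the explicit treatment of the squared mean $\mathbb{E}[T_j(f_i,R)]^2$, which the paper leaves implicit in its appeal to Markov's inequality.
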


We now turn to the second framework and present the statement of the multivariate central limit theorem based on the Brillinger mixing condition and assuming $\alpha < d$. 
The Brillinger assumption, recalled in \myappendix{}, has been utilized in spatial statistics \cite{heinrich1985normal, heinrich2011central, heinrich2012asymptotic} and proved to be satisfied for models such as $\alpha$-determinantal point processes with $L^1(\mathbb{R}^d)$ kernel~\cite{heinrich2016strong}, determinantal point processes with $L^2(\mathbb{R}^d)$ kernel \cite{biscio2016brillinger}, Thomas Cluster, and Mat\'ern hard-core point processes~\cite{heinrich2013absolute}. The proof of the following theorem is provided in Section~\ref{sec_clt}. 
 
\begin{theorem}\label{thm_clt}
Let $\Phi$ satisfy Assumptions~\ref{ass_rho_leb} and~\ref{ass_S_0}, with intensity $\lambda=1$. Suppose that $\alpha < d$ in Assumption~\ref{ass_S_0} and assume that $\Phi$ is Brillinger mixing. Let $J\subset(0,1)$ be a finite subset and $(f_i)_{i \in I}$ be a finite family of Schwartz functions, i.e., $f_i \in \mathcal{S}(\mathbb{R}^d)$, such that $\int_{\mathbb{R}^d} f_i(x)dx = 0$. 
  Then,   
	$$(R^{\frac{\alpha - d}2 j} T_j(f_i, R))_{i \in I, j \in J} \xrightarrow[R \to \infty]{Law} (\sqrt{t} N(i, j, \alpha))_{i \in I, j \in J},$$
	where $(N(i, j, \alpha))_{i \in I, j \in J}$ is a Gaussian vector with zero mean and covariance matrix:
	\begin{equation}\label{eq_cov_mat}
		\Sigma := \left(\mathbf{1}_{j_1  = j_2} \int_{\mathbb{R}^d} \mathcal{F}[f_{i_1}](k) \overline{\mathcal{F}[f_{i_2}]}(k)|k|^{\alpha} dk \right)_{(j_1, j_2) \in J^2, (i_1, i_2) \in I^2}.
	\end{equation}
\end{theorem}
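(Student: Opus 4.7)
The plan is to apply the method of cumulants combined with the Cram\'er--Wold device. Since a centered multivariate Gaussian is characterized by its covariance and the vanishing of all joint cumulants of order $\geq 3$, it suffices to prove that, for every real family $(c_{i,j})_{i\in I,j\in J}$, the scalar linear statistic
\begin{equation*}
Y^R \,:=\, \sum_{i,j} c_{i,j}\, R^{(\alpha-d)j/2}\, T_j(f_i,R) \,=\, \sum_{x\in\Phi} H_R(x),
\end{equation*}
with $H_R := \sum_{i,j} c_{i,j}\, R^{(\alpha-d)j/2}\, \mathbf{1}_{[-R,R]^d}\, f_i(\cdot/R^j)$, converges in law to a centered Gaussian of variance $t\, c^{\top}\Sigma\, c$.

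For the covariance step, apply Campbell's identity~\eqref{e.prop_campbell} to each bilinear contribution. Setting $g_{i,j,R} := \mathbf{1}_{[-R,R]^d}\, f_i(\cdot/R^j)$, one is left with integrals of the form $R^{(\alpha-d)(j+j')/2}\int \mathcal{F}[g_{i,j,R}]\,\overline{\mathcal{F}[g_{i',j',R}]}\, S\,dk$. The relation $\mathcal{F}[g_{i,j,R}](k) = R^{dj}\mathcal{F}[f_i](R^j k) + o(R^{dj})$ (justified because $f_i\in\mathcal{S}(\mathbb{R}^d)$ and the truncation scale $R^{1-j}\to\infty$), the change of variable $u = R^{j\vee j'}k$, the hyperuniform asymptotics $S(u/R^{j\vee j'})\sim t R^{-\alpha(j\vee j')}|u|^\alpha$, and dominated convergence combine to produce the diagonal limit $t\int \mathcal{F}[f_i](u)\,\overline{\mathcal{F}[f_{i'}](u)}\,|u|^\alpha\,du$, matching the $(i,j),(i',j)$-entry of $\Sigma$. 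Off-diagonal pairs ($j<j'$) vanish because they carry the prefactor $R^{(\alpha+d)(j-j')/2}\to 0$ combined with $\mathcal{F}[f_i](R^{j-j'}u)\to\mathcal{F}[f_i](0)=0$ (using $\int f_i = 0$). This step essentially extends the variance computation of Proposition~\ref{prop_variance} (encoded in Lemma~\ref{lemma_cov}).

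For the vanishing of the cumulants of order $n\geq 3$, invoke Brillinger mixing: the reduced cumulant measure $\gamma_n^{\mathrm{red}}$ is a finite signed measure on $(\mathbb{R}^d)^{n-1}$, so the $n$th cumulant spectrum $S_n := \mathcal{F}[\gamma_n^{\mathrm{red}}]$ is bounded and continuous. The spectral representation
\begin{equation*}
\mathrm{cum}_n(Y^R) \,=\, \int S_n(\xi_1,\dots,\xi_{n-1})\, \prod_{\ell=1}^{n-1}\widehat{H_R}(\xi_\ell)\cdot \widehat{H_R}\Bigl(-\textstyle\sum_\ell \xi_\ell\Bigr)\,d\xi,
\end{equation*}
together with $\widehat{H_R}(\xi) = \sum_{i,j} c_{i,j}\, R^{(\alpha+d)j/2}\,\mathcal{F}[f_i](R^j\xi) + o(1)$, followed by scale-by-scale substitutions $u_\ell = R^{j_\ell}\xi_\ell$, reduces each cumulant to an $R$-uniformly integrable kernel times a prefactor. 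Combining the boundedness of $S_n$, the Schwartz decay of $\mathcal{F}[f_i]$, and the cancellation $\mathcal{F}[f_i](0)=0$ furnished by the zero-integral assumption on the tapers, one checks that this prefactor carries a strictly negative power of $R$ under the assumption $\alpha<d$, so $\mathrm{cum}_n(Y^R)\to 0$.

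The main obstacle is precisely this cumulant estimate: a naive bound that only exploits $\|S_n\|_\infty<\infty$ leaves a surplus power $R^{dj_\ast}$ (with $j_\ast := \min_\ell j_\ell$) that would force the stronger restriction $\alpha<d/3$, so one must carefully extract the cancellation furnished by $\mathcal{F}[f_i](0)=0$ and by the Schwartz regularity of $\mathcal{F}[f_i]$ at the origin in order to cover the full range $\alpha<d$. Once convergence of all joint cumulants is established ($\mathrm{cum}_2\to t\,c^{\top}\Sigma c$ and $\mathrm{cum}_n\to 0$ for $n\geq 3$), the moment--cumulant formulae give convergence of all moments of $Y^R$ to those of the candidate Gaussian; moment-determinacy of the Gaussian limit then yields convergence in law, and the Cram\'er--Wold theorem upgrades this scalar convergence to the joint statement.
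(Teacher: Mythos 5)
Your overall architecture (Cram\'er--Wold reduction, Campbell/Plancherel computation of the covariance, cumulant control via Brillinger mixing) is the same as the paper's, and your covariance step is essentially the paper's Lemma~\ref{lemma_cov}. The gap is in the higher-order cumulants. You correctly note that the crude bound using only $\|S_n\|_\infty<\infty$ gives $\mathrm{cum}_n(Y^R)=O(R^{j(d-n(d-\alpha)/2)})$, which for $n=3$ requires $\alpha<d/3$; but the cancellation you propose to extract from $\mathcal{F}[f_i](0)=0$ does not exist. After the substitution $u_\ell=R^{j}\xi_\ell$ the arguments of the $\mathcal{F}[f_i]$ are of order one, not near the origin, and the rescaled integral converges to $S_n(0)\int\prod_\ell\mathcal{F}[f](u_\ell)\,\mathcal{F}[f]\bigl(-\sum_\ell u_\ell\bigr)\,du$, a nonzero multiple of $\int f^n$ in general: $\int f=0$ does not force $\int f^3=0$, and $S_3(0)$ has no reason to vanish for a hyperuniform process, since hyperuniformity only constrains the second-order spectrum. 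Concretely, $\mathrm{cum}_3(T_j(f,R))\asymp R^{dj}$ (its leading part is $\lambda\int_{[-R,R]^d}f(x/R^j)^3\,dx$ plus same-order corrections from the diagonal terms of the cumulant measures), so $\mathrm{cum}_3(Y^R)\asymp R^{j(3\alpha-d)/2}$ genuinely diverges when $d/3<\alpha<d$. Your program of proving $\mathrm{cum}_n(Y^R)\to0$ for every $n\ge3$ is therefore impossible on the full range $\alpha<d$, and your closing step (convergence of all moments plus moment-determinacy of the Gaussian) fails as well, since already the third moment diverges.

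The paper avoids this with a different logical endgame: it only proves that cumulants of sufficiently \emph{high} order vanish --- the crude estimate $|C_m(A_R)|\le O(R^{\frac{j^*(\alpha-d)m}{2}})\cdot O(R^d)$ with $j^*=\min J$ gives $C_m(A_R)\to0$ for every $m>2d/(j^*(d-\alpha))$ --- and then invokes the Marcinkiewicz-type lemma (Lemma~3 of Soshnikov, cited in Section~\ref{sec_clt}): convergence of the first two cumulants together with the vanishing of all cumulants beyond some fixed order already implies asymptotic normality, even though intermediate normalized cumulants may blow up. Substituting this lemma for your final two steps repairs the argument; as written, the proposal does not prove the theorem for $\alpha\ge d/3$. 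A secondary point: your spectral formula for $\mathrm{cum}_n(Y^R)$ is only valid if $S_n$ is the Fourier transform of the reduced \emph{non-factorial} cumulant measure, which carries the diagonal contributions such as $\lambda\int H_R^n$; with the factorial measure the formula misses precisely the terms that dominate.
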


Note that when $\alpha > 0$, the strong correlation among points results in a slower rate of convergence compared to the typical Poisson-like rate of $R^{-\frac{d}{2}j}$. Furthermore, the wavelet transforms associated with different scales $j_1 \neq j_2$ are asymptotically uncorrelated, similar to what is observed in spectral analysis of univariate time series \cite{percival2020spectral}. However, in practice, we will not solely rely on this theoretical asymptotic independence. Instead, we will provide a correlated Gaussian representation, corresponding to the pre-limit value of $\widehat{\alpha}(I, J, R)$,  as presented in Section~\ref{sec_conf_int}.

As a corollary of Theorem \ref{thm_clt}, we obtain the asymptotic error law of $\widehat{\alpha}(I, J, R)$. The proof is postponed to  Section~\ref{app_cv}. 
\begin{corollary}\label{cor_ic_mult_scala_est_taper}
	Under the same setting as in Theorem \ref{thm_clt}, the estimator $\widehat{\alpha}(I, J, R)$ given in 
Definition~\ref{def_mult_scale_est} satisfies the following convergence in distribution:   
	\begin{equation}\mathbf{1}_{|\Phi_R| \geq 1}\log(R)\left(\widehat{\alpha}(I, J, R) - \alpha\right) \xrightarrow[R \to \infty]{Law} \sum_{j \in J} w_j \log\left(\sum_{i \in I} N(i, j, \alpha)^2\right),\label{eq_ic_mult_scala_est_taper}
 \end{equation}
	where $(N(i, j, \alpha))_{i \in I, j \in J}$ is Gaussian vector with zero mean and covariance matrix $\Sigma$, defined in~\eqref{eq_cov_mat}. Consequently, $\widehat{\alpha}(I, J, R) \xrightarrow[R \to \infty]{\mathbb{P}} \alpha$.
\end{corollary}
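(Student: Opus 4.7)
The plan is to deduce Corollary \ref{cor_ic_mult_scala_est_taper} from Theorem \ref{thm_clt} via the continuous mapping theorem, followed by a Slutsky-type cleanup to dispose of the indicator $\mathbf{1}_{|\Phi_R|\geq 1}$. The algebraic starting point is the decomposition \eqref{eq_decom_al}: exploiting $\sum_{j\in J} w_j = 0$ and $\sum_{j\in J} j w_j = 1$, one checks by direct computation that on $\{|\Phi_R|\geq 1\}$,
$$\log(R)\bigl(\widehat{\alpha}(I,J,R) - \alpha\bigr) = \sum_{j\in J} w_j \log\Bigl(R^{(\alpha-d)j} \sum_{i\in I} T_j(f_i,R)^2\Bigr),$$
so the limiting law of the left-hand side is dictated entirely by that of the vector $\bigl(R^{(\alpha-d)j}\sum_{i\in I} T_j(f_i,R)^2\bigr)_{j\in J}$.

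Next I would invoke Theorem \ref{thm_clt} directly: it gives the joint convergence in law of $\bigl(R^{\frac{\alpha-d}{2}j}T_j(f_i,R)\bigr)_{i\in I, j\in J}$ to $\sqrt{t}\,(N(i,j,\alpha))_{i\in I, j\in J}$. Squaring each component, summing over $i\in I$ for each fixed $j$, and applying the logarithm is a composition of continuous maps, so by the continuous mapping theorem the right-hand side above converges in law to
$$\sum_{j\in J} w_j \log\Bigl(t \sum_{i\in I} N(i,j,\alpha)^2\Bigr) = \sum_{j\in J} w_j \log\Bigl(\sum_{i\in I} N(i,j,\alpha)^2\Bigr),$$
where the $\log t$ terms cancel thanks to $\sum_{j\in J} w_j = 0$, matching the right-hand side of \eqref{eq_ic_mult_scala_est_taper}.

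The only subtle step is verifying that $\log$ is continuous at the limit, i.e.\ that $\sum_{i\in I} N(i,j,\alpha)^2 > 0$ almost surely for every $j\in J$. This follows from the assumption in Definition \ref{def_mult_scale_est} that at least one taper $f_{i_0}$ is analytic and not identically zero: this forces $\mathcal{F}[f_{i_0}]$ to be a continuous function that is not identically zero, so the diagonal entry $\Sigma_{(i_0,j),(i_0,j)} = \int_{\mathbb{R}^d} |\mathcal{F}[f_{i_0}](k)|^2 |k|^{\alpha}\,dk$ of the covariance matrix is strictly positive for every $j$. Consequently $N(i_0,j,\alpha)$ is a non-degenerate centered Gaussian and therefore nonzero almost surely, whence the sum is positive a.s.

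Finally, to dispose of the indicator I would note that $\mathbf{1}_{|\Phi_R|\geq 1}\xrightarrow[R\to\infty]{a.s.} 1$, because by stationarity with intensity $\lambda=1$ almost every realisation of $\Phi$ contains infinitely many points, and then invoke Slutsky. Consistency is then immediate: dividing the convergence in \eqref{eq_ic_mult_scala_est_taper} by $\log R\to\infty$ shows that $\mathbf{1}_{|\Phi_R|\geq 1}(\widehat{\alpha}(I,J,R) - \alpha)\to 0$ in probability, and one last application of Slutsky to the indicator concludes. I do not anticipate any serious obstacle; the bookkeeping pieces are the non-degeneracy $\sum_i N(i,j,\alpha)^2 > 0$ and the asymptotic vanishing of the indicator, the substantive content being packaged inside Theorem \ref{thm_clt}.
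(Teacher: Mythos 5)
Your proposal is correct and follows essentially the same route as the paper's proof: the decomposition of $\log(R)(\widehat{\alpha}-\alpha)$ via the weight conditions \eqref{eq_sum_wj_j_0}--\eqref{eq_sum_wj_j_1}, the continuous mapping theorem applied to Theorem~\ref{thm_clt} (with the non-degeneracy of $\sum_{i}N(i,j,\alpha)^2$ guaranteed by a non-trivial taper), the cancellation of the $\log t$ terms via $\sum_j w_j=0$, and Slutsky's lemma to absorb the indicator. Your handling of the indicator and of the positivity of the limit is, if anything, slightly more explicit than the paper's.
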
  

Before delving into the construction of confidence intervals for $\alpha$, we would like to make three remarks.

\begin{remark}
As seen in~\eqref{eq_ic_mult_scala_est_taper}, the convergence rate of $\widehat{\alpha}(I, J, R)$ to $\alpha$ is logarithmic in terms of $R$. While convergence is achieved even when $|I| = 1$, meaning that $\widehat{\alpha}(I, J, R)$ is defined with the wavelet transforms $(T_j(f, R))_{j \in J}$ based on  just one function $f$, this convergence is notably slow. This is why we employ multi-taper techniques, averaging over multiple wavelet transforms associated with various functions $(f_i)_{i \in I}$ where $|I| \geq 2$, to reduce the variance of $\widehat{\alpha}(I, J, R)$. In Section \ref{sec_multi_taper}, we delve into the impact of $|I|$ on both the bias and variance of $\widehat{\alpha}(I, J, R)$.
\end{remark}

\begin{remark}\label{rem_compar_cond}
Corollary~\ref{cor_ic_mult_scala_est_taper} yields the consistency of the estimator $\widehat{\alpha}(I, J, R)$ in the setting of Theorem~\ref{thm_clt}, which is proved with $0\le \alpha < d$ under the Brillinger mixing for point processes. This should be compared to the consistency stated in Theorem \ref{thm_cv_alpha}, that may apply  even when $\alpha \geq d$. As a matter of fact, the central limit theorem of  $T_j(f, R)$ may still hold for certain point processes with $\alpha \ge d$, despite  the variance of this statistic, of order $R^{(d - \alpha)j}$  by Proposition~\ref{prop_variance}, does not increase. In such cases, the approach to prove  the central limit theorem is specific to each situation and cannot follow a general scheme as in the proof of Theorem~\ref{thm_clt}. Notably, it has been proven for  the Ginibre point process on the plane  $\mathbb{R}^2$ ($\alpha=2$) \cite{rider2006complex}, the sinc determinantal point process in dimension 1 ($\alpha=1$)~\cite{soshnikov2002gaussian, gaultier2016fluctuations}, planar determinantal point processes with reproducing kernels ($\alpha = d=  2$) \cite{haimi2024normality}, the zero set of the GAF function ($d = 2$ and $\alpha = 4$) \cite{sodin2004random, nazarov2011fluctuations} and for some dependently perturbed lattices \cite{sodin2004random} ($d = 2$ and $\alpha \in \{2, 4\}$). 
 For these processes, Theorem~\ref{thm_cv_alpha} therefore guarantees the consistency of our estimator of $\alpha$. 
\end{remark}

\begin{remark}
Though being the exception and not the rule, there exist point process models that meet neither the conditions of Theorem~\ref{thm_cv_alpha}, nor those of Theorem~\ref{thm_clt}. 
For instance, it may be proven that for a one-dimensional lattice perturbed by stable distributions with parameter $\alpha \in (1, 2)$, then for all $j \in J$ and all $f \in \mathcal{S}(\mathbb{R})$, $f\neq 0$, with zero integral, $R^{\frac{\alpha-1}2 j}T_j(f, R)$ converges to $0$ in distribution, in contradiction with the assumptions of Theorem~\ref{thm_cv_alpha} and with the non-degenerated limit of Theorem~\ref{thm_clt}. 
In fact, for this peculiar example,  $R^{\frac{\alpha-1}{\alpha} j}T_j(f, R)$ converges toward a $\alpha$-stable distribution, so that our estimator $\widehat{\alpha}(I, J, R)$ converges in probability not to $\alpha$ but to $(3\alpha -2)/\alpha \in (\alpha, \alpha + 0.172)$. 
\end{remark}

\subsection{Confidence intervals}\label{sec_conf_int}
In this section, we construct asymptotic confidence intervals for our estimator $\widehat{\alpha}(I, J, R)$. Instead of relying on the asymptotic limit presented in Corollary~\ref{cor_ic_mult_scala_est_taper}, which is based on the Gaussian representation with the covariance matrix $\Sigma$ given in~\eqref{eq_cov_mat}, we opt for another Gaussian representation with covariance $\Sigma_R$, which is transient in the sense that it depends on $R$. This approach is motivated by the observation that in the asymptotic regime $R=\infty$ the wavelet transforms associated with different scales $j_1 \neq j_2$ are asymptotically uncorrelated. However, in a pre-limit regime, where $R<\infty$, the random vectors $(T_j(f_i, R))_{i \in I}$ across $j \in J$ are not yet independent, and their actual covariance might deviate significantly from its limit $\Sigma$. This will be reflected by the transient covariance matrix $\Sigma_R$. 
Our numerical simulations in Section~\ref{sec_bench} confirm that this modification leads to a satisfactory coverage of the confidence intervals, see Table~\ref{table_coverage_R}, whereas confidence intervals based on $\Sigma$ (not reported) barely achieve a coverage of $50\%$ for the same simulations, instead of the nominal rate of $95\%$. 
The proof of the following proposition is postponed to  Section~\ref{app_ICA}, where the keypoint is to theoretically justify plugging in $\hat\alpha$ instead of $\alpha$ in the quantile function. 

\begin{proposition}\label{prop_ICA}
	Let the assumptions of Theorem \ref{thm_clt} be satisfied. Let $a \in (0, 1/2)$. For all $\beta\ge 0$,  denote by  $(N(i, j, R, \beta))_{i \in I, j \in J}$ the Gaussian vector with zero mean and covariance matrix:
	\begin{equation}\label{eq_cov_mat_R}
		\Sigma_R:= \Bigg(R^{\frac{\beta +d}2 (j_1 +j_2)} \int_{\mathbb{R}^d} \mathcal{F}[f_{i_1}](R^{j_1} k) \overline{\mathcal{F}[f_{i_2}]}(R^{j_2}k) |k|^{\beta} dk \Bigg)_{(j_1, j_2) \in J^2, (i_1, i_2) \in I^2}.
	\end{equation}
	Moreover, for $q \in (0, 1)$, we denote by $F_{R}^{-1}(q; \beta)$ the quantiles of order $q$ of $Z_R(\beta)$ where:
	$$Z_R(\beta) := \sum_{j \in J} w_j \log\left(\sum_{i \in I} N(i, j, R, \beta)^2\right).$$ 
	Then the probability that the hyperuniformity exponent  $\alpha$  is covered by the interval
	 \begin{equation}\label{eq_conf_int}
	 \left[\widehat{\alpha} - \frac{F_{R}^{-1}(1-a/2; \widehat{\alpha}\vee 0)}{\log(R)}\mathbf{1}_{|\Phi_R| \geq 1},~ \widehat{\alpha}- \frac{F_{R}^{-1}(a/2; \widehat{\alpha}\vee 0)}{\log(R)}\mathbf{1}_{|\Phi _R| \geq 1}\right],
 \end{equation}
 where $ \widehat{\alpha}:=\widehat{\alpha}(I, J, R)$, converges to $1 - a$ when $R \to \infty$.
\end{proposition}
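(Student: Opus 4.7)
The overall strategy is to split the argument into three pieces: (i) identify the weak limit of $\log(R)(\widehat{\alpha} - \alpha)$ via Theorem~\ref{thm_clt}; (ii) prove a joint continuity property of the law of $Z_R(\beta)$ in both $R$ and $\beta$, which delivers the convergence of the quantiles $F_R^{-1}(q;\beta_R)$ for any sequence $\beta_R\to\alpha$; (iii) apply Slutsky to absorb the plug-in of $\widehat{\alpha}\vee 0$ for $\alpha$ inside the quantile function.

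For step (i), start from decomposition \eqref{eq_decom_al}, which yields
\[
\log(R)(\widehat{\alpha}-\alpha)\mathbf{1}_{|\Phi_R|\ge 1} = \sum_{j\in J} w_j \log\!\Big(\sum_{i\in I} R^{(\alpha-d)j} T_j(f_i,R)^2\Big)\mathbf{1}_{|\Phi_R|\ge 1}.
\]
Theorem~\ref{thm_clt} gives the joint weak convergence of $(R^{(\alpha-d)j/2} T_j(f_i,R))_{i\in I,j\in J}$ to $(\sqrt t\,N(i,j,\alpha))_{i,j}$, a non-degenerate centred Gaussian vector. Because $\sum_i N(i,j,\alpha)^2>0$ almost surely, the continuous mapping theorem applies, and the weight condition $\sum_j w_j=0$ eliminates the additive $\log t$ constant, yielding
\[
\log(R)(\widehat{\alpha} - \alpha) \xrightarrow[R\to\infty]{Law} Y_\infty := \sum_{j\in J} w_j \log\!\Big(\sum_{i\in I} N(i,j,\alpha)^2\Big).
\]
The factor $\mathbf{1}_{|\Phi_R|\ge 1}$ is asymptotically negligible since $\mathbb E|\Phi_R|=(2R)^d\to\infty$ and a Chebyshev-type estimate based on Brillinger mixing implies $\mathbb P(|\Phi_R|\ge 1)\to 1$.

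For step (ii), analyse $\Sigma_R(\beta)$ entrywise via the change of variable $u=R^{j_1}k$. A direct computation shows that each diagonal entry ($j_1=j_2=j$) reduces to $\int \mathcal F[f_{i_1}](u)\overline{\mathcal F[f_{i_2}]}(u)|u|^\beta\, du$, which is independent of $R$ and depends continuously on $\beta$ by dominated convergence (using $f_i\in\mathcal S(\mathbb R^d)$). Each off-diagonal entry ($j_1\ne j_2$) carries a vanishing prefactor $R^{-(\beta+d)|j_1-j_2|/2}$ multiplying an integral that remains bounded (here $\mathcal F[f_i](0)=\int f_i=0$ and the Schwartz decay of $\mathcal F[f_i]$ are exploited), hence tends to $0$ uniformly over $\beta$ in any bounded subset of $[0,\infty)$. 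Therefore, for any (possibly random) sequence $\beta_R\to\alpha$, $\Sigma_R(\beta_R)\to\Sigma$ and consequently $Z_R(\beta_R)\xrightarrow[R\to\infty]{Law} Y_\infty$. Since $Y_\infty$ admits a density (as a smooth nontrivial function of a non-degenerate Gaussian vector), its CDF $F_\infty$ is continuous and strictly increasing at its quantiles, so $F_R^{-1}(q;\beta_R)\to F_\infty^{-1}(q;\alpha)$ for every $q\in(0,1)$.

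For step (iii), Corollary~\ref{cor_ic_mult_scala_est_taper} gives $\widehat{\alpha}\vee 0 \to \alpha$ in probability, so step (ii) applied along subsequences yields $F_R^{-1}(q;\widehat{\alpha}\vee 0)\to F_\infty^{-1}(q;\alpha)$ in probability for each $q\in(0,1)$. Rewriting the coverage event as
\[
F_R^{-1}(a/2;\widehat{\alpha}\vee 0)\mathbf{1}_{|\Phi_R|\ge 1} \le \log(R)(\widehat{\alpha}-\alpha)\mathbf{1}_{|\Phi_R|\ge 1} \le F_R^{-1}(1-a/2;\widehat{\alpha}\vee 0)\mathbf{1}_{|\Phi_R|\ge 1},
\]
and invoking Slutsky's lemma, its probability converges to $F_\infty(F_\infty^{-1}(1-a/2;\alpha)) - F_\infty(F_\infty^{-1}(a/2;\alpha)) = 1-a$. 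The chief technical obstacle lies in step (ii): one must establish the joint continuity of the law of $Z_R(\beta)$ at $(R=\infty,\beta=\alpha)$ with enough uniformity in $\beta$ to legitimise the substitution of the \emph{random} estimator $\widehat{\alpha}\vee 0$ into $F_R^{-1}(q;\cdot)$ in place of the true value $\alpha$.
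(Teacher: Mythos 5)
Your proposal is correct and follows essentially the same route as the paper: entrywise convergence of $\Sigma_R(\beta)$ to $\Sigma$, uniformly for $\beta$ near $\alpha$ and $R$ large (the paper's Lemma~\ref{lemma_diff_cov}), continuity and strict monotonicity of the limiting CDF and hence of its quantile function (Lemmas~\ref{lemma_cont_F} and~\ref{lemma_diff_cdf}), and a final Slutsky/plug-in step legitimising the substitution of $\widehat{\alpha}\vee 0$ for $\alpha$. One small caveat: the limiting Gaussian vector $(N(i,j,\alpha))$ need not be non-degenerate, since Definition~\ref{def_mult_scale_est} does not require the tapers $f_i$ to be linearly independent; the paper only uses that each $j$-block of $\Sigma$ is nonzero (so $\sum_{i\in I} N(i,j,\alpha)^2>0$ almost surely and $Z$ is a continuous random variable with strictly increasing CDF), which is all your argument actually needs.
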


\begin{remark}\label{rmk_cov_Hermites}
	In the case of $d = 2$ and for the Hermite wavelet functions $(f_i)_{i \in I}$ (introduced in the next section), the covariance matrix $\Sigma_R$ can be computed without numerical integration. Detailed explanations are provided in Section~\ref{app_cov_mat}.
\end{remark}

\subsection{Bias and variance trade-off in multi-tapering}\label{sec_multi_taper}
In this section, our primary focus is on studying the impact of tapers $f_i$ with indexes $i$ in set $I$ on the  estimation error $\widehat{\alpha}(I, J, R) - \alpha$. Additionally, we discuss the significance of the scales $j$ in  set $J$.
We operate within the asymptotic regime established by Corollary~\ref{cor_ic_mult_scala_est_taper} and begin with the non-hyperuniform scenario, where $\alpha=0$, serving as a benchmark for comparison.  In  Proposition~\ref{prop_var_lim_alpha0}, we establish a bound of order $|I|^{-1}$ for the variance of the limiting distribution when $\alpha=0$,  provided the taper functions are orthogonal, that translates to the order $|I|^{-1}\log^{-2}(R)$ on the variance of the estimation error. 
Moving on to the hyperuniform case $\alpha>0$, a more complex covariance structure arises due to the asymptotic dependence of wavelets $(T_j(f_i, R))_{j\in J}$ even when the tapers $f_i$ are orthogonal. This dependence potentially amplifies the pre-limit bias of our estimator as $|I|$ increases. To address this issue, we consider specific tapers from the family of Hermite wavelets, and in Proposition~\ref{prop_biais_var} show a balance to be found  in terms of the number $|I|$ of tapers while pursuing conflicting objectives of minimizing the variance and bias in our estimator.
This issue is well-known in signal processing studies, as discussed in \cite{percival2020spectral, riedel1995minimum}, and we will also illustrate it in Figure~\ref{fig_matched}.

Recall from Corollary~\ref{cor_ic_mult_scala_est_taper} that the estimation error $\widehat{\alpha}(I, J, R) - \alpha$ decreases as  $1/\log(R)$ to a limiting  random variable given in the right-hande side of~\eqref{eq_ic_mult_scala_est_taper},
where $(N(i, j, \alpha))_{i \in I, j \in J}$ is a Gaussian vector with covariance matrix $\Sigma$, as given in~\eqref{eq_cov_mat}. 
Assuming $\alpha=0$, we can establish a bound on the variance of this limiting random variable explicitly depending on the number of scales and tapers, when these tapers are assumed to be orthonormal.
\begin{proposition}\label{prop_var_lim_alpha0}
	We assume that $\alpha = 0$ and that $(f_i)_{i \in I}$ is an orthonormal family of $L^{2}(\mathbb{R}^d)$: $\forall (i_1, i_2) \in I^2$, $\langle f_{i_1}, f_{i_2} \rangle = \mathbf{1}_{i_1 = i_2}$. Let $(N(i, j, \alpha))_{i \in I, j \in J}$ be a centered Gaussian random vector with covariance matrix $\Sigma$ defined in \eqref{eq_cov_mat}. Then, 
 \begin{equation} \label{eq_var_lim_alpha0}
      \operatorname{Var}\left[\sum_{j \in J} w_j \log\big(\sum_{i \in I} N(i, j, \alpha)^2\big)\right] \leq \left(\sum_{j \in J} w_j^2\right) \frac{|I| + 1}{|I|^2}.
 \end{equation}
\end{proposition}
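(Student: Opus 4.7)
The plan proceeds in three stages: reducing the covariance $\Sigma$ to the identity via Plancherel and orthonormality; exploiting the resulting independence to factor the variance across scales; and finally bounding the variance of the logarithm of a chi-square random variable.

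For the first stage, observe that when $\alpha = 0$ the factor $\mathbf{1}_{j_1 = j_2}$ in~\eqref{eq_cov_mat} kills off-scale blocks, while on the diagonal
\[
\int_{\mathbb{R}^d} \mathcal{F}[f_{i_1}](k)\, \overline{\mathcal{F}[f_{i_2}](k)}\, dk \;=\; \langle f_{i_1}, f_{i_2}\rangle \;=\; \mathbf{1}_{i_1 = i_2},
\]
by Plancherel's theorem together with the orthonormality of $(f_i)_{i \in I}$. Hence $\Sigma$ is the identity matrix, so the family $(N(i, j, 0))_{i \in I, j \in J}$ consists of mutually independent standard normal variables. In particular, the random variables $Y_j := \sum_{i \in I} N(i, j, 0)^2$ are mutually independent across $j \in J$, each distributed as $\chi^2_{|I|}$.

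In the second stage, the independence of the $Y_j$ yields
\[
\operatorname{Var}\!\left[\sum_{j \in J} w_j \log Y_j\right] \;=\; \sum_{j \in J} w_j^2 \, \operatorname{Var}[\log \chi^2_{|I|}],
\]
reducing the proof to the one-variable estimate $\operatorname{Var}[\log \chi^2_{|I|}] \leq (|I| + 1)/|I|^2$. For this remaining step I would use the elementary inequality $(\log v)^2 \leq (v-1)^2/v$ (valid for all $v > 0$, and provable by substituting $v = e^x$ and invoking $2\sinh(|x|/2) \geq |x|$), applied to $v = Y_j/|I|$, so as to obtain
\[
\operatorname{Var}[\log Y_j] \;\leq\; \mathbb{E}\!\left[\log^2\!\left(\frac{Y_j}{|I|}\right)\right] \;\leq\; \mathbb{E}\!\left[\frac{(Y_j - |I|)^2}{|I|\, Y_j}\right],
\]
and then evaluate the right-hand side explicitly using the moments $\mathbb{E}[Y_j] = |I|$ and $\mathbb{E}[1/Y_j] = 1/(|I| - 2)$ of the chi-square distribution. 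An alternative route starts from the closed-form identity $\operatorname{Var}[\log \chi^2_n] = \psi_1(n/2)$, where $\psi_1(z) = \sum_{k \geq 0}(z + k)^{-2}$ is the trigamma function, isolates the $k = 0$ term, and controls the tail via the integral comparison $\sum_{k \geq 1} (z+k)^{-2} \leq \int_0^\infty (z+x)^{-2}\, dx = 1/z$.

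The main obstacle is pinning down the sharp constant $(|I|+1)/|I|^2$ in this last step. The generic routes above (Gaussian Poincar\'e applied to $f(X) = \log|X|^2$, Efron--Stein, or the moment inequality sketched above) all yield bounds of the correct order $1/|I|$ but with somewhat worse multiplicative constants, so some care is required to align the computation with the stated constant — either by optimising the centering point in the moment inequality to exploit the specific symmetries of the chi-square law, or by a careful telescoping of the trigamma series through the recurrence $\psi_1(z+1) = \psi_1(z) - 1/z^2$.
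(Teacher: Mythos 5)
Your first two stages coincide exactly with the paper's own argument: when $\alpha=0$, Plancherel together with the orthonormality of $(f_i)_{i\in I}$ turns $\Sigma$ in \eqref{eq_cov_mat} into the identity, the variables $Y_j:=\sum_{i\in I}N(i,j,0)^2$ become independent $\chi^2_{|I|}$ random variables, and the variance factorizes as $\bigl(\sum_{j\in J}w_j^2\bigr)\operatorname{Var}\bigl[\log \chi^2_{|I|}\bigr]$. Up to that point there is nothing to add.

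The final step, where you rightly hesitate, is where the real issue lies, and it is not a defect of your method. The correct identity is the one you state: since $\chi^2_n$ is $\Gamma(n/2,2)$, the cumulant generating function of its logarithm is $t\log 2+\log\Gamma(n/2+t)-\log\Gamma(n/2)$, so $\operatorname{Var}[\log\chi^2_n]=\psi^1(n/2)$, with the trigamma function evaluated at $n/2$. The paper's proof instead asserts $\operatorname{Var}[\log U_j]=\psi^1(|I|)$ and applies the bound $\psi^1(x)\le (x+1)/x^2$ at $x=|I|$; the argument of the trigamma function is off by a factor of $2$. Since $\psi^1(|I|/2)\sim 2/|I|$ while $(|I|+1)/|I|^2\sim 1/|I|$, the inequality \eqref{eq_var_lim_alpha0} with this precise constant cannot be reached by any argument: already for $|I|=1$ the left-hand side equals $\bigl(\sum_j w_j^2\bigr)\psi^1(1/2)=\bigl(\sum_j w_j^2\bigr)\pi^2/2\approx 4.93\sum_j w_j^2$, which exceeds the claimed $2\sum_j w_j^2$. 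Your moment bound $\operatorname{Var}[\log Y_j]\le \mathbb{E}\bigl[(Y_j-|I|)^2/(|I|\,Y_j)\bigr]=2/(|I|-2)$ (valid for $|I|\ge 3$, using $\mathbb{E}[1/Y_j]=1/(|I|-2)$) is therefore essentially sharp, and the ``somewhat worse multiplicative constant'' you obtain is unavoidable. The substance of the proposition, namely that the variance is of order $|I|^{-1}\sum_j w_j^2$, does survive — for instance $\psi^1(|I|/2)\le 2(|I|+2)/|I|^2$ from the same trigamma bound applied at $x=|I|/2$ — but you should not spend further effort trying to hit the constant $(|I|+1)/|I|^2$ as printed.
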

\begin{proof}
	When $\alpha = 0$ and $\langle f_{i_1}, f_{i_2} \rangle = \mathbf{1}_{i_1 = i_2}$, the covariance matrix $\Sigma$ becomes the identity matrix. 
Therefore
$$\sum_{j \in J} w_j \log\big(\sum_{i \in I} N(i, j, \alpha)^2\big) = \sum_{j \in J} w_j \log\big(U_j\big),$$ 
where $(U_j)_{j \in J}$ are independent $\chi^2(|I|)$ random variables with $|I|$ degrees of freedoms.
Hence,  $\operatorname{Var}\left[\log\big(U_j\big)\right]= \psi^1(|I|)$ where $\psi^1$ is the trigamma function, i.e. the second derivative of the logarithm of the gamma function, see \cite{bartlett1946statistical}. We deduce 
that
	\begin{align*}
		\operatorname{Var}\left[\sum_{j \in J} w_j \log\big(U_j\big)\right] &= \sum_{j \in J} w_j^2 \operatorname{Var}\left[\log\big(U_j\big)\right]  = \sum_{j \in J} w_j^2  \psi^1(|I|),
	\end{align*}
and 	\eqref{eq_var_lim_alpha0} follows from the standard upper bound \cite{abramowitz1968handbook}, $\forall x > 0, \psi^1(x) \leq (x+1)/x^2$.
\end{proof}

While Proposition \ref{prop_var_lim_alpha0} sheds light on the benefits of employing multiple tapers to reduce the  asymptotic variance of $\widehat{\alpha}$, it does not explain how the tapers $(f_i)_{i \in I}$ influence the bias of  $\widehat{\alpha}(I,J, R)$ for a finite size $R$ of the observation window.
Furthermore, the result in Proposition \ref{prop_var_lim_alpha0} does not cover the hyperuniform case $\alpha > 0$. To address both  these questions theoretically, we focus on the family of Hermite wavelets as defined below. These functions will also be utilized in the numerical simulations outlined in Section \ref{sec_bench}.

Let $i = (i_1, \dots, i_d) \in \mathbb{N}^d$. The Hermite wavelet $\psi_i$ is defined for any $x=(x_1,\dots,x_d)\in\mathbb R^d$ by:
 \begin{equation}\label{eq_herm}
\psi_i(x) = e^{-\frac12|x|^2} \prod_{l = 1}^d H_{i_l}(x_l), 
 \end{equation}
	where  for $n \in \mathbb{N}$ and $y \in \mathbb{R}$, $H_n(y) = (-1)^n (2^{n} n! \sqrt{\pi})^{-1/2} e^{y^2} \dfrac{d^n}{dy^n} e^{-y^2}$ are the Hermite polynomials.
It is well known that these functions are orthonormal: $\langle \psi_{i_1}, \psi_{i_2} \rangle = \mathbf{1}_{i_1 = i_2}$ for all $(i_1, i_2) \in (\mathbb{N}^d)^2$;
cf e.g. \cite{abramowitz1968handbook}, and that they are eigenvectors of the Fourier transform: $\mathcal{F}[\psi_i] =~(-\bm{i})^{i_1 + \dots + i_d} \psi_i$.
Moreover, since the Hermite wavelets are products of exponential functions and polynomials, they are analytic. Therefore, using them as tapers $f_i$ in our estimator $\widehat{\alpha}(I, J, R)$ ensures that the conditions of Proposition \ref{prop_no_atom_0} are met.

In this context, our next result  focuses on  the impact of the number of these Hermite tapers in $I$ on both the bias and variance of $\widehat{\alpha}(I, J, R)$, considering a large but finite window size $R<\infty$. The proof of this result, which is deferred to Section~\ref{app_mult_taper}, relies on lower and upper bounds on the fractional moments of the Hermite wavelets and their $L^2$ tail (refer to Lemmas~\ref{lemma_frac_norm_herm}-\ref{lemma_remain_herm}). These bounds can be interpreted as a localization property of these functions in both the spatial and Fourier domains.
The next proposition also sheds  light on the possible occurence of border effects, as already noted in  Remark~\ref{rmk_finite_size_effect}. These arise when the maximal support of the scaled wavelets approaches the size of the observation window, which happens when the term $d(R)$ below tends to 0.

\begin{proposition}\label{prop_biais_var} We suppose that $\Phi$ satisfies Assumptions~\ref{ass_rho_leb} and~\ref{ass_S_0}, and is Brillinger mixing with intensity $\lambda=1$. 
We moreover strengthen Assumption~\ref{ass_S_0} by assuming that $\alpha<d$ and  that for all $k \in \mathbb{R}^d$, $|S(k) - t|k|^{\alpha}| \leq C_S |k|^{\beta}$ with $\beta > \alpha$ and $C_S> 0$. 
We consider the family of Hermite wavelets as tapers, i.e. $f_i=\psi_i$ given by \eqref{eq_herm}, for all $i\in I$,  where $I$ is of the form $I = \{i \in \mathbb{N}^d |~|i|_{\infty} < i_{\text{max}}, \text{ and at least one component of } i \text{ is odd}\}$ with $i_{\text{max}} \geq 1$. Let $J\subset(0,1)$ be a finite subset. Finally, we denote by $j_{\text{max}} := \max\{j|~j \in J\}<1$ and $j_{\text{min}} := \min\{j|~j \in J\}>0$ and assume that 
\begin{align}\label{eq_support_hermites}
	d(R) := R^{1-j_{\text{max}}} - \sqrt{2 i_{\text{max}}} > 0.
\end{align}
Then, there exists $R_0 > 0$ and $0 < C=C(\epsilon,|J|) < \infty $, such that for all $R \geq R_0$:
	\begin{align}\label{eq_biais_var}
		\mathbb{P}\big(\mathbf{1}_{|\Phi_R| \geq 1}\log(R)|\widehat{\alpha}(I, J, R) - \alpha| \geq \epsilon \big) \leq C \left(\frac1{|I|} + \left(\frac{|I|^{1/d}}{R^{2{j_{\text{min}}}}}\right)^{\beta - \alpha} + \frac{e^{-\frac{d(R)^2}{4}}}{d(R)} \right).
	\end{align}
\end{proposition}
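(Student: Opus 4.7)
The plan is to reduce the problem, via Chebyshev, to controlling the bias-variance trade-off of the normalized squared wavelet transforms, and then to account separately for the border effect coming from truncation to $[-R,R]^d$.

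First, I would use the decomposition \eqref{eq_decom_al} together with $\sum_{j\in J} w_j=0$ to write, on the event $\{|\Phi_R|\ge 1\}$,
\[
\log(R)\bigl(\widehat{\alpha}(I,J,R)-\alpha\bigr)
=\sum_{j\in J} w_j\,\log\!\left(\frac{V_{j,R}}{M}\right),\quad
V_{j,R}:=\sum_{i\in I} R^{(\alpha-d)j} T_j(\psi_i,R)^2,
\]
with the deterministic centering $M:=t\sum_{i\in I}\int_{\mathbb{R}^d}|\psi_i(u)|^2|u|^{\alpha}\,du$. Because $|\log(1+x)|\le 2|x|$ for $|x|\le 1/2$, on the good event $G_R=\{|V_{j,R}/M-1|\le 1/2\ \forall j\in J\}$ we have $|\log(R)(\widehat{\alpha}-\alpha)|\le 2\bigl(\sum_j|w_j|\bigr)\max_j|V_{j,R}/M-1|$, so it suffices to bound $\mathbb{P}(|V_{j,R}/M-1|\ge \delta)$ for $\delta$ proportional to $\epsilon/(|J|\max_j|w_j|)$ and to union-bound over $j\in J$. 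This is what will absorb the dependence of $C$ on $\epsilon$ and $|J|$.

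Next I would handle the truncation, introducing $\widetilde T_j(\psi_i,R)=\sum_{x\in\Phi}\psi_i(x/R^j)$ and $\widetilde V_{j,R}$ accordingly. The scaled Hermite wavelet $\psi_i(\cdot/R^j)$ is essentially supported in $\{|x|_\infty\le R^j\sqrt{2i_{\max}}\}$, so $\|\psi_i(\cdot/R^j)\,\mathbf{1}_{[-R,R]^{dc}}\|_2^2$ is controlled through the announced tail bound of Lemma~\ref{lemma_remain_herm}, producing a contribution of order $e^{-d(R)^2/4}/d(R)$ to $\mathbb{E}|V_{j,R}-\widetilde V_{j,R}|$ and to $\mathbb{E}|V_{j,R}-\widetilde V_{j,R}|^2/M^2$, via Cauchy–Schwarz and Campbell. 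The condition \eqref{eq_support_hermites} guarantees $d(R)>0$ so that these tail bounds apply.

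For the bias, the key is that the ``at least one odd component'' restriction in $I$ forces $\psi_i(0)=\prod_l H_{i_l}(0)=0$ (since $H_{2m+1}(0)=0$), hence $\int\psi_i=(2\pi)^{d/2}\mathcal{F}[\psi_i](0)=(2\pi)^{d/2}(-\bm i)^{|i|_1}\psi_i(0)=0$; therefore $\mathbb{E}[\widetilde T_j(\psi_i,R)]=0$. Combining this with Campbell's identity \eqref{e.prop_campbell} gives $\mathbb{E}[R^{(\alpha-d)j}\widetilde T_j(\psi_i,R)^2]=R^{\alpha j}\int|\psi_i(u)|^2 S(u/R^j)\,du$, and the assumption $|S(k)-t|k|^{\alpha}|\le C_S|k|^{\beta}$ yields the pointwise bias bound $R^{-(\beta-\alpha)j}C_S\int|\psi_i(u)|^2|u|^{\beta}du$. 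Invoking the announced Hermite moment bounds of Lemma~\ref{lemma_frac_norm_herm} to compare $\sum_i\int|\psi_i|^2|u|^{\beta}$ and $\sum_i\int|\psi_i|^2|u|^{\alpha}$, the relative bias is of order $\bigl(|I|^{1/(2d)}/R^{j_{\min}}\bigr)^{\beta-\alpha}$; after Chebyshev this is squared to the stated $\bigl(|I|^{1/d}/R^{2j_{\min}}\bigr)^{\beta-\alpha}$.

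For the variance I would exploit Brillinger mixing: writing $\operatorname{Cov}(\widetilde T_j(\psi_i,R)^2,\widetilde T_j(\psi_{i'},R)^2)=2\operatorname{Cov}(\widetilde T_j(\psi_i,R),\widetilde T_j(\psi_{i'},R))^2+\mathrm{cum}_4(\widetilde T_j(\psi_i,R),\widetilde T_j(\psi_i,R),\widetilde T_j(\psi_{i'},R),\widetilde T_j(\psi_{i'},R))$, I would bound the fourth cumulant via the cumulant measure formula recalled in \myappendix{} (Brillinger mixing makes the total variation of the fourth cumulant measure finite), giving a contribution of lower order in $R$, while the leading Gaussian-type term $2\operatorname{Cov}^2$ amounts to $2\operatorname{tr}(\Sigma_R^{(j,j)}\Sigma_R^{(j,j)})\le 2|I|\|\Sigma_R^{(j,j)}\|^2$, the operator norm of the block $\Sigma_R^{(j,j)}$ being uniformly bounded thanks to orthonormality of the Hermite family $\langle\psi_i,\psi_{i'}\rangle=\delta_{ii'}$ and the integrability of $|u|^{\alpha}$ against $|\psi_i|^2$. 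Since $M\asymp|I|$, this gives $\operatorname{Var}(\widetilde V_{j,R})\le C M^2/|I|$, which by Chebyshev contributes the $1/|I|$ term. Combining the three pieces through $\mathbb{E}[(V_{j,R}-M)^2]\le 3\,\mathbb{E}[(V_{j,R}-\widetilde V_{j,R})^2]+3\operatorname{Var}(\widetilde V_{j,R})+3(\mathbb{E}\widetilde V_{j,R}-M)^2$ finishes the proof.

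The main obstacle will be Step~4: obtaining the $1/|I|$ variance scaling in the pre-limit regime under only Brillinger mixing (rather than in the asymptotic Gaussian regime of Theorem~\ref{thm_clt}), which requires quantitative control of the fourth-cumulant contribution of the vector $(\widetilde T_j(\psi_i,R))_{i\in I}$ uniformly in $|I|$; this is what forces the hypotheses $\alpha<d$ and Brillinger mixing to be assumed together. A secondary subtle point is the log-singularity at zero: it is handled by restricting to the good event $G_R$ and observing that its complement is already covered by the Chebyshev bound for $\delta=1/2$.
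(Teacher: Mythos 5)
Your overall architecture is exactly that of the paper: reduce via the weight normalizations, a union bound over $j\in J$, the mean-value inequality for $\log$, and a second-moment (Markov/Chebyshev) bound to controlling $\mathbb{E}\big[(V_{j,R}/M-1)^2\big]$; then split into a squared bias (driven by $|S(k)-t|k|^\alpha|\le C_S|k|^\beta$ and the Hermite fractional-moment bounds of Lemma~\ref{lemma_frac_norm_herm}), a border term (the $L^2$ tail bound of Lemma~\ref{lemma_remain_herm}), and a variance term. Your bias and border computations match the paper's, up to bookkeeping: the paper keeps the truncated $T_j$ throughout and pushes the truncation error into the expectation via Cauchy--Schwarz/Plancherel, rather than comparing $V_{j,R}$ with a non-truncated $\widetilde V_{j,R}$ in $L^2$. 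One small point: the ``at least one odd component'' condition does more than give $\int\psi_i=0$; it makes $\int_{[-R^{1-j},R^{1-j}]^d}\psi_i=0$ \emph{exactly} by oddness, so that $\mathbb{E}[T_j(\psi_i,R)]=0$ for the truncated statistic and $\mathbb{E}[T_j^2]=\operatorname{Var}[T_j]$ — this is what lets the paper avoid your $\widetilde T$ detour.

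The genuine gap is in your variance step. First, two of your intermediate claims are false for $\alpha>0$: $M=t\sum_i\int|\psi_i|^2|u|^\alpha\,du\asymp i_{\max}^{d+\alpha/2}=|I|^{1+\alpha/(2d)}$, not $|I|$; and the operator norm of $\Sigma^{(j,j)}$ is at least its largest diagonal entry, hence $\gtrsim i_{\max}^{\alpha/2}\to\infty$, not uniformly bounded (the matrix is a Gram matrix for the unbounded multiplier $|k|^\alpha$). The two errors happen to cancel in the final exponent, but the correct route is the trace/Bessel bound $\operatorname{tr}\big((\Sigma^{(j,j)})^2\big)\le\sum_{i}\int|\psi_i|^2|k|^{2\alpha}\,dk\le C\sum_i|i|^\alpha$, which is precisely inequality \eqref{eq_sum_mom_herm_2} of Lemma~\ref{lemma_bound_sum_herm}; divided by $M^2\asymp\big(\sum_i|i|^{\alpha/2}\big)^2$ this gives the $1/|I|$ rate. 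Second, and more seriously, your proposed direct bound on $\sum_{i,i'}\mathrm{cum}_4$ via the cumulant-measure formula does not obviously yield a ``lower order in $R$'' term: the natural bound $\|g\|_\infty^3\|g\|_1\,|\gamma_{\mathrm{red}}^{(4)}|(\mathbb{R}^{3d})$ with $g=R^{(\alpha-d)j/2}\psi_i(\cdot/R^j)$ is of order $R^{(2\alpha-d)j}$, which does not vanish when $\alpha\ge d/2$, and the same issue arises for the other partitions. The paper does not attempt this uniform-in-$|I|$ pre-limit estimate at all: it invokes the convergence of moments of $(R^{(\alpha-d)j/2}T_j(\psi_i,R))_{i\in I}$ to those of the limiting Gaussian vector (from the proof of Theorem~\ref{thm_clt}) and simply takes $R_0$ large enough that $V(I,R)\le 2\operatorname{Var}[\sum_iN(i)^2]$ — which is legitimate because the statement permits $R_0$ to depend on $I$, and then the Gaussian identity $\operatorname{Cov}[N(i_1)^2,N(i_2)^2]=2\operatorname{Cov}[N(i_1),N(i_2)]^2$ plus \eqref{eq_sum_mom_herm_2} closes the argument. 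You should either adopt that shortcut or supply a genuinely new quantitative fourth-cumulant estimate; as written, this step does not go through.
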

The following remark concludes the main concern of this section, discussing the influence of tapers on the estimator's bias and variance.
\begin{remark} \label{rem:bias-variance}
The term $|I|^{-1}$ in the bound of \eqref{eq_biais_var} signifies the reduction in the asymptotic variance of our estimator when more tapers are used. This aligns with our earlier observation in Proposition \ref{prop_var_lim_alpha0} for $\alpha=0$, extending to the hyperuniform case of $\alpha>0$. On the other hand, the term $(|I|^{1/d}/R^{2j_{\text{min}}})^{\beta - \alpha}$ accounts for the increasing estimation error (bias) as the number $|I|$ of tapers grows while $R$ remains fixed at a finite value.  
This  highlights  the risk of employing an excessive number of tapers for a fixed observation window size, as the estimator becomes more sensitive  to higher frequencies in the data, that are the higher-order terms, represented by the term  $|k|^{\beta}$, with $\beta>\alpha$, in the
expansion of the structure factor $S$ near zero frequency.
Similar observations have been made in the context of univariate time series~\cite{percival2020spectral, riedel1995minimum}. Finally the term involving $d(R)$ in  \eqref{eq_biais_var} concerns border effects that also have an impact on the bias. Indeed, $R^{j_{\text{max}}}\sqrt{2 i_{\text{max}}}$ can be viewed as the largest range  of the numerical support $\prod_{p = 1}^d[-R^{j_{\max}}\sqrt{2 i_{p}}, R^{j_{\max}}\sqrt{2 i_{p}}]$ of the scaled Hermite wavelets $\psi_i(./R^{j})$ for $i = (i_1, \dots, i_d)\in I$ and $j\in J$. Border effects appear when this support approaches the size $R$ of the window, that is when $d(R)$ tends to 0. We come back to the practical choice of $j_{\text{max}}$ that  mitigate border effects in point \ref{i.upper_bound_J} of Section~\ref{sec_prat_imp}.

\end{remark}
We conclude this section by noting that further reduction in variance may be achieved by utilizing several realizations of the point process.
\begin{remark}\label{rem_several_images}
	The proof strategy of Proposition~\ref{prop_biais_var} easily extends to scenarios where multiple independent realizations of point patterns are observed. If we denote the estimators corresponding to $M$ observed realizations $\Phi_m$ of a point process as $\widehat{\alpha}(I, J, R; \Phi_m)$ for $m = 1, \dots, M$, then we can demonstrate that the average of these estimators, $M^{-1}\sum_{m = 1}^M \widehat{\alpha}(I, J, R; \Phi_m)$, still satisfies the bound established in Proposition \ref{prop_biais_var}.  In this case, the variance is  further reduced by the factor $M^{-1}$, i.e., the term $|I|^{-1}$ in~\eqref{eq_biais_var} is replaced by $|I|^{-1} M^{-1}$.
\end{remark}

\section{Numerical study}\label{sec_bench}
In this section, we explore the numerical behavior of $\widehat{\alpha}(I, J, R)$. We first explain in Section~\ref{sec_prat_imp} our practical implementation of the estimator, whether it concerns the choice of taper functions $f_i$, their cardinality, or the choice of scales $j$.  Second, in Section~\ref{sec_sim_sim}, we apply our estimation method to simulated point processes. We start by assessing the performances of our estimator  to independently perturbed 
{\em cloaked lattices}~\cite{klatt2020cloaking}, that are benchmark models covered by our theoretical framework  and with a tunable hyperuniformity exponent  $\alpha$. Next, we consider simulated {\em matched point processes}~\cite{andreas2020hyperuniform} to address  the conjecture that their hyperuniformity exponent is $\alpha=2$.  Finally, Section~\ref{sec_real_data} deals with a real data-set of algae system, to  investigate their  hyperuniformity feature.

The codes and data concerning this section are available in our online GitHub repository at \url{https://github.com/gabrielmastrilli/Estim_Hyperuniformity}.

\subsection{Practical implementation}\label{sec_prat_imp}

To illustrate how the theory developed in Section \ref{sec_alpha} can be applied in practice for estimating $\alpha$, we consider two point process models:

\begin{itemize}
    \item The random sequential adsorption model (RSA), extensively discussed in physics and chemistry; see~\cite{evans1993random}. In the stochastic geometry literature, it is known as the Mat\'ern III hard-core process; for more details, refer to \cite[Section~6.5.3]{chiu2013stochastic}.
     \item The Ginibre process, introduced as a two-dimensional Coulomb gas in \cite{ginibre1965statistical} and as a prominent example of determinantal point processes in stochastic geometry by \cite{macchi1975coincidence}.
\end{itemize}
These two models showcase border behaviors within a spectrum of models that span the hyperuniformity exponent $\alpha$ ranging from 0 to the dimension of the space. The RSA model stands as a non-hyperuniform model with $\alpha=0$, whereas the Ginibre process represents a hyperuniform model with $\alpha=d=2$. These models aid us in illustrating and discussing implementation issues regarding our estimator, particularly in association with the Poisson point process, which acts as a reference model. In the following illustration, the RSA process has been simulated with an underlying Poisson intensity of $3$ and an exclusion radius of $r = 1$ in the observation window of $[-70, 70]^2$, resulting in approximately 20 000 points. In turn the Ginibre process has been simulated in the observation window of $[-30, 30]^2$,  resulting in about 3 500 points.

\begin{figure}
	\centering 
	\begin{tabular}{cc}
	   	\includegraphics[width=0.47\linewidth]{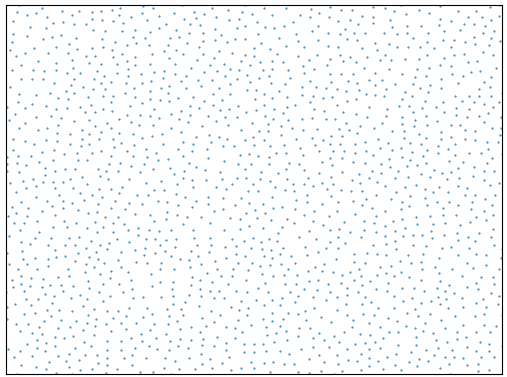}
		&
	\includegraphics[width=0.47\linewidth]{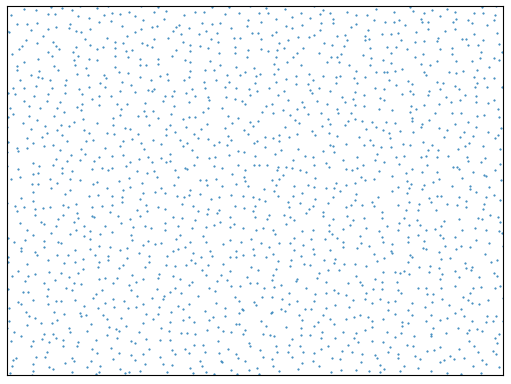}\\
	RSA & Ginibre\\
	\includegraphics[width=0.48\linewidth]{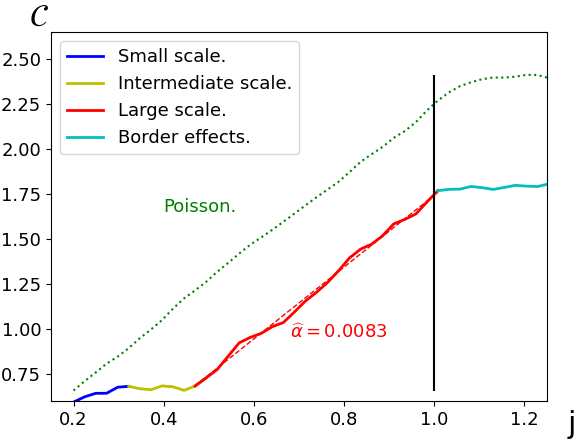} 
	& \includegraphics[width=0.48\linewidth]{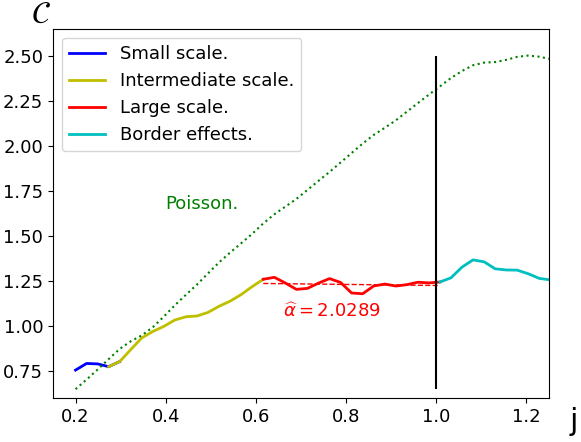}
	\end{tabular}
	\caption{Estimation of $\alpha$ using 75 Hermite tapers for one realization each of the RSA (left) and the Ginibre (right) models, having approximately 20 000 and 3 500 points, respectively (see the text for details). (Top) Example of point patterns inside $[-20, 20]^2$. (Bottom)	
	The horizontal axis represents scales $j \in (0.1, 1.3)$ and the solid lines represent the curve $\mathcal{C}$ as defined in \eqref{eq_C}. The parts in blue, yellow, red, and cyan denote different slopes of $\mathcal{C}$, depending on the scale. The estimator $\hat \alpha$ corresponds to 2 minus the estimated slope of the red part. The green dotted line represents the curve $\mathcal{C}$ for the Poisson process with similar intensity, allowing to identify the value of $j_{max}$ beyond which border effects occur, as indicated by the black vertical line. 
}
 \label{fig_expl}
\end{figure}

Figure~\ref{fig_expl} shows the two generated point patterns (zoomed in $[-20,20]^2$) and illustrates the estimation of $\alpha$  using the linear regression-based estimator $\widehat{\alpha}$ (using equation \eqref{eq_LQ_al} with weights~\eqref{eq_weights}). Specifically, the full lines in  Figure~\ref{fig_expl} represent, respectively for RSA and Ginibre model,  the function
\begin{align}\label{eq_C}
\mathcal{C}: j \in J \mapsto& \frac{1}{\log(R)} \log\left(\sum_{i \in I} T_j(\psi_i, R)^2\right) 
= \frac{1}{\log(R)} \log\left(\sum_{i \in I} \left(\sum_{x \in \Phi_R} \psi_i\left(\frac{x}{R^j}\right)\right)^2\right) 
\end{align}
 for  scales $j$ ranging from 0.1 to 1.3,  and utilizing $|I|=75$ Hermite tapers $(\psi_i)_{i \in I}$ given by~\eqref{eq_herm}.
Recall from~ \eqref{eq_LQ_al} (see also \eqref{eq_heur_scaling}) that the estimator $\widehat{\alpha}$ for $\alpha$ is defined as the dimension $d=2$ minus the slope of the function $j\mapsto\mathcal{C}(j)$. In principle all scales $j$ within the range of $(0, 1)$ can be included in the set $J$ for the estimator $\widehat{\alpha}(I,J,R)$. However, it is essential to note in Figure~\ref{fig_expl} that the slope of the function $\mathcal{C}$ varies (indicated by different colors), and obtaining a precise estimation of $\alpha$ necessitates a careful choice of the scale range $j_{min} \leq j \leq j_{max}$ within the set $J$. The pertinent ranges for this purpose  for the RSA and Ginibre models are illustrated in Figure~\ref{fig_expl} by the red segments of the curves. For $j<j_{min}$, the asymptotic regime defined in Proposition~\ref{prop_variance} is not met, and for $j>j_{max}$, border effects become apparent (refer to Remark~\ref{rmk_finite_size_effect}). 
In both illustrations of Figure~\ref{fig_expl}, $j_{max}$ happens to be close to the theoretical value of 1, where, in principle, we can capture the smallest frequencies of $S(k)$ but also observe the onset of border effects. This is also where these effects for the auxiliary estimation of $\alpha=0$ for the Poisson point process with the same intensity (shown in green dotted style) begins to manifest. 
This alignment with the auxiliary Poisson point process behavior for the choice of $j_{max}$ always occurs. 
 Further details regarding the choices of $j_{min}$ and $j_{max}$ are outlined in points \ref{i.upper_bound_J} and \ref{i.lower_bund_J} below.

Once the ranges $j_{min}$ and $j_{max}$  are chosen, the estimator $\widehat{\alpha}$ is deduced from  the slope of the least squares linear regression on a discrete set of $|J|=50$ scales selected within $[j_{min}, j_{max}]$ (further details on the cardinality of $J$ are provided in point~\ref{i.card_J}). In the examples depicted in Figure \ref{fig_expl}, this resulted in an estimation of $\widehat{\alpha}(I, J, R) = 0.0083$ for the RSA  points pattern and $\widehat{\alpha}(I, J, R) = 2.0289$ for the Ginibre points pattern.

\smallskip

In the following six points, we provide more detailed suggestions regarding the selection of parameters to implement  $\widehat{\alpha}(I, J, R)$.

\begin{enumerate}[itemsep=5pt]
	\item \label{i.scales} \textbf{Normalizing  intensity.} Remember that throughout the paper  we assumed for simplicity that the intensity of the point process $\Phi$ is equal to $\lambda = 1$. To align the scale of the wavelet transform with the observed points pattern, we estimate the intensity and choose a unit distance to obtain a realization of  points of intensity close to~1. Specifically, if  $\Phi=\{x_1, \dots, x_n\}$  is observed in the region $W_R$, which is $[-R, R]^d$ or $B(0, R)$, the numerical computations are performed with $\overline{x_1}, \dots, \overline{x_{n}}$ observed in the region $W_{\overline{R}}$, where $\overline{x_i} = \widehat{\lambda}^{1/d} x_i$, $\overline{R} = \widehat{\lambda}^{1/d} R$, and $\widehat{\lambda}=n/|W_R|$.
	
	\item \label{i.tapers} \textbf{Choice of taper functions.} In all simulations, we choose as tapers the Hermite wavelets $(\psi_i)_{i \in \mathbb{N}^d}$ as described in~\eqref{eq_herm} due to their well-localized behavior in both the spatial and Fourier domains. Additionally, they are orthogonal for the $L^2(\mathbb{R}^d)$ scalar product, making them suitable candidates for multi-tapering. Specifically, we set $f_{i} = \psi_{i}(5 \times \cdot)$. While the factor $5$ is not crucial, it allows us to observe border size effects near $j = 1$, that is to choose $j_{max}\approx 1$, as per the explanations in point~\ref{i.upper_bound_J}.  This is due to the fact that with this choice of factor 5, the maximal support of $f_i$, for $i\in I$ and $I$  chosen as in point~\ref{i.card_I} below, is $\max_{i\in I} \sigma_i\approx 1$, where $\sigma_i$ is defined in  \eqref{defsigma} below,   leading to $j_{max}\approx 1$ in view of \eqref{choice jmax}.

	\item \label{i.card_I}\textbf{Number  of tapers in $I$.} As discussed in Section \ref{sec_multi_taper}, the number of tapers plays a crucial role in balancing variance and bias. Increasing the number of tapers generally decreases the variance but can introduce a significant bias. Determining the optimal number of tapers depends on the unknown structure factor and requires practical adjustments. One approach is to progressively increase the number of tapers $|I|$. 
For small $|I|$, the curve $\mathcal{C}$ defined in~\eqref{eq_C} tends to be noisy, while for large $|I|$, the relevant scale range $[j_{min}, j_{max}]$ (depicted by the full red segments in Figure \ref{fig_expl}) becomes narrow, potentially leading to high bias. This bias can be attributed to the wavelets $\psi_i(5\times\cdot)$ corresponding to $d$-dimensional indexes $i = (i_1, \dots, i_d) \in \mathbb{N}^d$ with large components, which are less localized in both spatial and Fourier domains. Consequently, the lower bound $j_{min}$ tends to increase. More detailed information is provided in point \ref{i.lower_bund_J}.
Furthermore, as border effects are influenced by the localization of the wavelets $\psi_i(5\times\cdot)$, the upper
bound $j_{max}$ decreases as $|I|$ increases (refer to point \ref{i.upper_bound_J} below for more details). 
In our simulations, we use the set $I = \{i \in \mathbb{N}^d~|~|i|_{\infty} < i_{\text{max}}, \text{ and at least one component of } i \text{ is odd}\}$, with $i_{\text{max}} = 10$, which leads to $75$ tapers in dimension $d = 2$.

	\item \label{i.upper_bound_J} \textbf{Choice of the largest  scale $j_{max}$ in $J$.} While theoretically all scales $j$ of $(0, 1)$ can be included in the set of scales $J$, in the non-asymptotic regime, the upper bound $j_{max}$ has to be chosen to avoid border effects (see Remark \ref{rmk_finite_size_effect}). Note that with  our choice of tapers as in point~\ref{i.tapers},  there is no border effect as long as: 
$$\forall i \in I,~\sum_{x \in \Phi_R} \psi_{i}(5x/R^j) = \sum_{x \in \Phi} \psi_{i}(5x/R^j).$$
	Denoting by $\sigma_i$ the numerical support of $\psi_i(5 \times \cdot)$, defined by 
\begin{equation}\label{defsigma}
\sigma_i= \inf\{\sigma > 0~|~\forall x \in \mathbb{R}^d, ~(|x|_{\infty} \geq \sigma \implies |\psi_i(5x)|\leq~\epsilon)\}
\end{equation} 
where $\epsilon \ll 1$ is the computer's precision, the latter equality holds numerically true whenever $R^j \sigma_i \leq R$ for all $i\in I$.
The theoretical condition $j < 1$ from Proposition \ref{prop_variance} therefore translates in practice to 
\begin{equation}\label{choice jmax}
j \leq 1 - \log(R)^{-1} \max_{i \in I}\log(\sigma_i) = j_{max}.
\end{equation}
 Accordingly, as the system size $R$ increases, higher values of $j$ become available. The upper bound $j_{max}$ of $J$ solely depends on the set of tapers function and can be tabulated. Importantly, it does not depend on the underlying point process $\Phi$ but only on its intensity (remember that the system has been rescaled so that $\lambda=1$, cf point \ref{i.scales}). One practical method is to simulate several realizations of a Poisson point process with intensity~1 in the same observation window, and calculate the function $\mathcal{C}$ for a given set of tapers, in order to clearly observe the border effects to choose $j_{max}$ (this method is illustrated by the green dotted curves in Figures \ref{fig_expl} and \ref{fig_algues}). Alternatively, one can visually inspect the largest numerical support of the wavelets $\psi_i(5 \times \cdot/R^j)$ to confirm that it remains within the observation window, as shown in the left-hand plot of Figure \ref{fig_algues} for our real-data example. 

	\item \label{i.lower_bund_J} \textbf{Choice of the smaller scale $j_{min}$  in $J$.} The accuracy of the asymptotic properties described in Proposition \ref{prop_variance} relies on how large is $R^{j}$. Consequently, for smaller systems, $j_{min}$ needs to be relatively large. Unlike the upper bound $j_{max}$ for the set of scales $J$, the lower bound $j_{min}$ depends on the specific point process under consideration and should be adjusted in practice. To this regard, we visualize the curve $\mathcal{C}$ defined in~\eqref{eq_C} and select the last single slope portion of the curve before $j_{max}$, as highlighted by the full red lines of Figure \ref{fig_expl}. 

	\item \label{i.card_J} \textbf{Number of scales in  $J$.} The asymptotic independence of wavelet transforms associated with two distinct scales $j_1$ and $j_2$, as expressed in the correlation matrix $\Sigma$ in Theorem~\ref{thm_clt}, is not practically satisfied for a given $R<\infty$ if $j_1$ and $j_2$ are too close to each other. (This is best understood through the   bounds~\eqref{eq_correlations-j1-j2} for the pre-limit correlation matrix $\Sigma_R$ developed in the proof of Theorem~\ref{thm_clt}.)
  	Consequently, considering a huge number of scales does not reduce the variance. In our simulations, we considered $|J| = 50$ scales uniformly subdividing  the range $[j_{min}, j_{max}]$.
\end{enumerate}

\subsection{Simulated point processes}\label{sec_sim_sim}

In this section, we consider two families of distributions in dimension $d=2$: perturbations of cloaked lattices and matched point processes. For several realisations of them, we compute  the estimator $\widehat{\alpha}(I, J, R)$ and analyse its distribution. The first model will help us to assess the performances of our method, since in this case the hyperuniformity exponent $\alpha$ is known and can be tuned. For the second model of matched point processes, the parameter $\alpha$ is conjectured to be 2, a value that we will confirm by simulations using our estimator. 
 In a concern for consistency with applied literature, the results of this section are stated in terms of the number of observed points $n$ within the observation window $[-R, R]^d$. Consequently, to increase the number of observed points, we simulate the point processes in larger windows, and the asymptotic $R \to \infty$ discussed in Section \ref{sec_alpha} corresponds to the asymptotic $n \to \infty$ of this section. 

\subsubsection{Perturbation of cloaked lattices}\label{sec_sim_pert_latt}
 Cloaked-and-perturbed lattices,  introduced in~\cite{klatt2020cloaking}, have already been mentioned  in Section~\ref{sec_example}. Starting from a cloaked lattice  $\Phi_0 = \{q + U + U_q| q \in \mathbb{Z}^2\}$, where $U$ and $(U_q)_{q \in \mathbb{Z}^2}$ are i.i.d. and uniform on $[-1/2, 1/2]^2$, they consist on  the perturbed point process $\{x + \xi_x| x \in \Phi_0\}$, where the random variables $(\xi_x)_{x \in \Phi_0}$ are i.i.d. with a characteristic function $\varphi$ satisfying $1 - |\varphi(k)|^2 \sim c|k|^{\alpha}$ as $|k| \to 0$, where $c > 0$ and $\alpha>0$. These models achieve hyperuniformity with a targeted exponent $\alpha$ and satisfy the conditions of Theorem~\ref{thm_cv_alpha}. 
For simulation, we leveraged the representation $\xi_x \stackrel{\text{Law}}{=} \sqrt{Y(\alpha/2)} Z_2$, where $Y(\alpha/2)$ is a one-sided $\alpha/2$-stable law \cite{uchaikin2002simulation, kanter1975stable} and $Z_2$ is a standard bivariate Gaussian variable with variance $\sigma^2$.  
It is known that the choice of $\sigma$ is sensitive in these models, cf \cite{kim2018effect} Section IV.B, in the sense that a bad choice may blur the hyperuniformity feature.  
We chose $\sigma = 0.15, 0.25, 0.35$ for respectively $\alpha = 0.5, 1, 1.5$. 
Finally, for each scenario, we varied the average number of points observed in the window from 900 to 6 400.

To estimate $\alpha$ from the considered realizations, we utilize our estimator $\widehat{\alpha}(I,J,R)$ with the choice of parameters discussed in Section \ref{sec_prat_imp}.
Figure \ref{fig_lattice} displays the results, based in each case on 500 replications. For the smaller system size of $n\approx 900$, there is a  bias present, that is all the more important when $\alpha$ is large. However, as the number of points increases, the bias almost disappears. Furthermore, the empirical standard deviation shown in Figure~\ref{fig_lattice} is small enough to distinguish between class I and class III hyperuniform point processes (refer to Remark~\ref{rmq_class_HU}), even with a moderate number of observed points.

\begin{figure}
	\centering    
	\includegraphics[width=14cm]{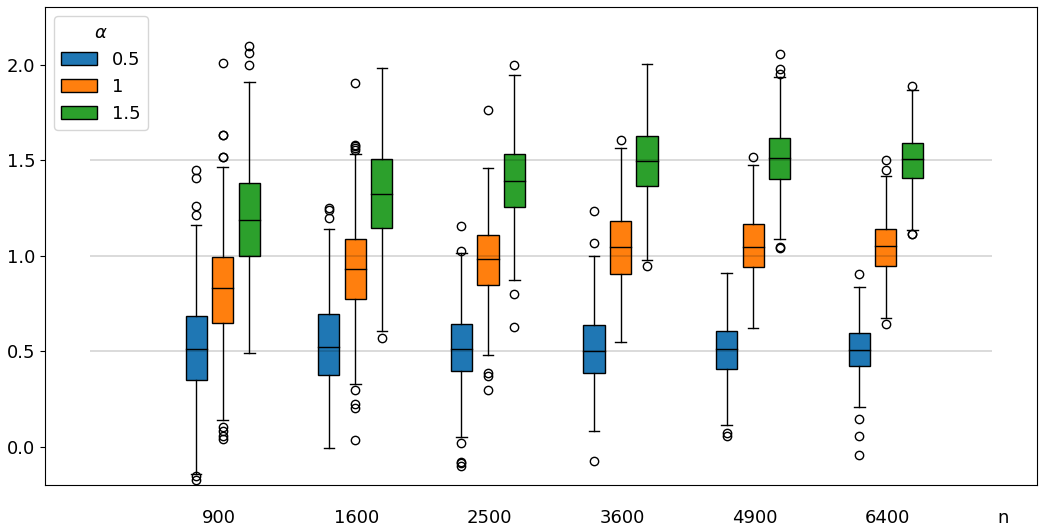}\\
	\caption{
 Empirical distribution of $\hat\alpha$ for perturbed cloaked lattices: blue, orange, and green represent the estimation for the theoretical values of $\alpha=0.5$, $\alpha=1$, and $\alpha=1.5$, respectively. Each boxplot is based on 500 estimations, each using a single realization with an average number of points of 900, 1600, 2500, 3600, 4900 and 6400, respectively (which corresponds to observation windows $[-R, R]^2$ with $R \in \{15, 20, 25, 30, 35, 40\}$).}
	\label{fig_lattice}
\end{figure} 

We next assess the quality of the asymptotic confidence intervals given by \eqref{eq_conf_int} and based on the covariance matrix $\Sigma_R$ in \eqref{eq_cov_mat_R}. Table~\ref{table_coverage_R} shows the  coverage rate of  these intervals for the same simulations as above. However, for computational ease, we used the quantiles of $Z_R(\alpha)$ instead of $Z_R(\widehat{\alpha})$ in \eqref{eq_conf_int}. Indeed the former requires the computation of $\Sigma_R$ only once for each situation, that is for a given $\alpha$ and $R$, and not 500 times as the use of $Z_R(\widehat{\alpha})$ would demand. In fact, while the computation of each entry of $\Sigma_R$ is very fast thanks to the expression of Proposition~\ref{prop_computation_sig_R} and the related remarks, the whole matrix contains $(|I|\times |J|)^2$ terms and might be  time consuming to get. 
This matrix is, however, sparse,  and  some dedicated approaches could be suggested to speed up its computation, but we did not enter into these refinements. Overall, the empirical results shown in Table~\ref{table_coverage_R} are in decent agreement with the nominal rate of $95\%$, especially for large systems ($n\approx 6\,400$) and when $\alpha$ is small. These are situations where $\alpha$ is also easier to estimate; see Figure~\ref{fig_lattice}. For comparison, the same simulations (not reported)  using the asymptotic matrix $\Sigma$ in \eqref{eq_cov_mat} instead of the pre-limit matrix $\Sigma_R$  showed a coverage rate not greater than $50\%$.

\begin{table}
	\begin{tabular}{ |c|c|c|c|c|c|c|  }
		\hline
		$\alpha$  & \multicolumn{6}{|c|}{Average number of points} \\
		\hline
		& 900&1600 & 2500 & 3600 & 4900 & 6400\\
		\hline
		\centering 0.5   & 93.2\%&  93.4\% & 95.6\% & 89.9\% & 94.4\% & 95.2\% \\
		1 & 92\% & 93\% & 95.2\%  & 88.6 \% & 90.6\% & 93.8\%\\
		1.5 & 84.2\% & 88\%& 91.2\%  & 91.2\% & 95.2\% & 95.2\% \\
		\hline 
	\end{tabular}
	\caption{Coverage rate of the confidence intervals \eqref{eq_conf_int}
  for the same simulations as in Figure~\ref{fig_lattice}.}
	\label{table_coverage_R}
\end{table}


\subsubsection{Matched point processes}\label{sec_sim_match}

In this section, we consider matched point processes introduced in~\cite{andreas2020hyperuniform} for a general dimension $d$, focusing here on $d=2$ for our estimation experiences. These processes are essentially  subsets of points of  a Poisson point process of intensity $\lambda_p > 1$, resulting from a sequential, mutual-nearest-neighbour matching  of Poisson points to those of a lattice with intensity~1. When $\lambda_p$ is close to 1, the resulting process is challenging to differentiate from a Poisson point process. Conversely, a large $\lambda_p$ inherits spatial regularity from the lattice, making the process more discernible. These matched point processes are proven to be hyperuniform in \cite{andreas2020hyperuniform}, with the hyperuniformity exponent $\alpha$ conjectured to be $2$. 
While matched point processes do not necessarily meet Assumptions~\ref{ass_rho_leb} and~\ref{ass_S_0}, Figure~\ref{fig_matched} demonstrates that our estimator yields values around $2$. We conducted simulations using the Python library \verb|structure-factor| developed by \cite{hawat2023estimating}, considering systems of $6\, 400$ points. 

Specifically, for $\lambda_p = 1.2$ and 75 tapers (depicted by the red box), the estimation averages below 2. This can be attributed to the challenge of observing hyperuniformity in this scenario where the resulting process closely resembles a Poisson point process; larger system sizes may be necessary to discern hyperuniformity more clearly. Besides, to highlight the impact of the number of tapers on bias and variance (see Section \ref{sec_multi_taper}), we considered the same setting of $\lambda_p = 1.2$ but with fewer taper functions. The green box represents the estimation of $\alpha$ with only 16 tapers instead of 75. With 16 tapers, the bias is reduced, but the standard deviation is higher compared to the 75-taper case.

\begin{figure}
	\includegraphics[width = 6.7cm]{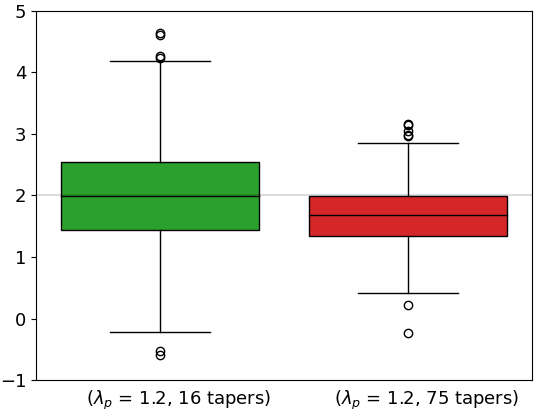}
		\includegraphics[width = 6.7cm]{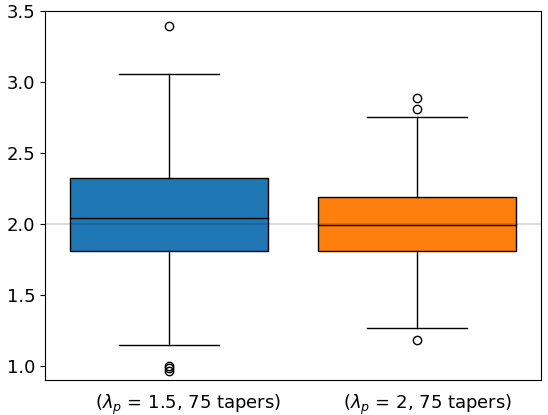}
	\caption{Empirical distribution of $\hat\alpha$ for matched point processes of 6 400 points (which corresponds to observation window $[-40, 40]^2$). Each boxplot is based on 500 estimations, each using one realization of the point process. (Left) The Poisson intensity is $\lambda_p=1.2$ for both the green and red boxes, but the green box correspond to an estimation using $|I|=16$ tapers, while the red one to $|I|=75$ tapers.  Observe the bias-variance tradeoff in the choice of the number of tapers: fewer tapers result in smaller bias, while more tapers reduce variance; see the discussion in Remark~\ref{rem:bias-variance}. (Right) The number of tapers is $|I|=75$ for both the blue and orange boxes, but $\lambda_p=1.5$ for the blue box and $\lambda_p=2$ for the orange box.}
	\label{fig_matched}
\end{figure}

\subsection{Application to a real data set}\label{sec_real_data}

In this section, we delve into real data concerning marine algae known as Effrenium Voratum, as examined in \cite{huang2021circular}. This algae system has been a subject of study in active matter theory, which deals with large numbers of interacting agents such as schools of fish or flocks of birds \cite{ramaswamy2010mechanics}. According to \cite{huang2021circular}, hyperuniformity is observed in the Effrenium Voratum system. This is attributed to the swimming behavior of the algae, which generates fluid flow and establishes long-range correlations. In their study,~\cite{huang2021circular} estimated $\alpha$ to be approximately 0.6  from a sequence of frames of the system, using a log-linear regression near zero of the scattering intensity function (refer to Remark~\ref{rem_classical_scatering-method} and~\cite{hawat2023estimating} for more details).

We apply our estimator to estimate $\alpha$ with the implementation outlined in Section \ref{sec_prat_imp}. 
Figure~\ref{fig_algues}  shows on the left-hand side the configuration of the system in one frame from the video sequence. In order to illustrate the occurence of border effects, the curve of $x_1\in\mathbb R \mapsto e^{-x_1^2/2} H_{9}(5x_1/R^j)$ has been overlayed, for different scales $j$. This function constitutes the $x$-axis part of the 2-dimensional tensorial Hermite wavelet $\psi_{9,i_2}$, $i_2\in\mathbb N$; see \eqref{eq_herm}. Note that for a given scale $j$, $\psi_{9,i_2}$ are the wavelets that exhibit the largest numerical support in the $x$-direction amongst all wavelets $\psi_i$, $i\in I$, that we use, for our choice of $I$  explained in point~\ref{i.card_I} of Section~\ref{sec_prat_imp}. Accordingly, we deduce from the left-hand side plot of Figure~\ref{fig_algues} that for all scales $j<0.95$, the support of our wavelets are within the observation window, while for $j>0.95$ border effects may occur. In the right-hand side plot of  Figure~\ref{fig_algues}, 
 each black line represents the curve $\mathcal{C}$ from Equation \eqref{eq_C}, derived for each of the 100 frames of the video in \cite{huang2021circular}. The red line depicts the mean of these black lines. Note that equations \eqref{eq_LQ_al} and~\eqref{eq_weights} imply that estimating $\alpha$ with the red line is equivalent to averaging the estimated values $(\widehat{\alpha}_j)_{j = 1, \dots, 100}$ from each black line.
The curve $\mathcal{C}$ for the Poisson point process with a similar intensity is added in a green dotted line. It confirms that for $j>j_{max}$, with $j_{max}=0.95$ shown by a vertical black line, border effects begin to appear, in agreement with the observation made on the left-hand side plot of Figure~\ref{fig_algues}.

Figure \ref{fig_algues} demonstrates for small scales ($j < 0.5$)  the value 0.6 reported in~\cite{huang2021circular}.  However, as we observe larger scales ($j > 0.7$), it becomes apparent that the system exhibits a stronger form of hyperuniformity than predicted by the classical approach via scattering intensity function. Indeed, conducting linear regression on the segment of the red curve bordered by the blue dotted lines ($0.7 < j < 0.95$) in Figure \ref{fig_algues} leads to approximately $\hat\alpha=1$.

\begin{figure}
	\centering
	\includegraphics[width=6.2cm]{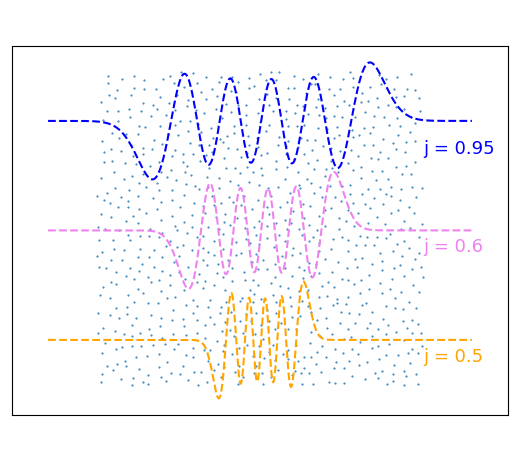}   \includegraphics[width=7cm]{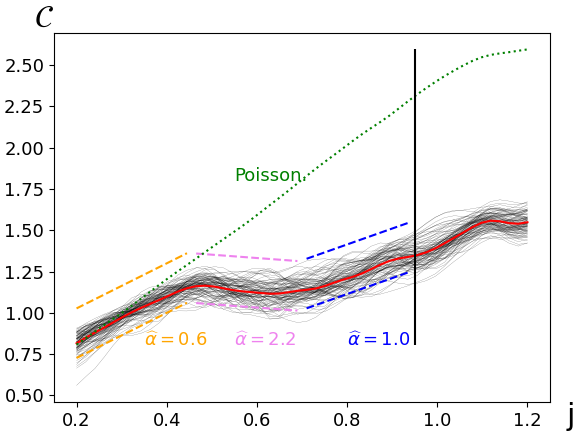}  
	\caption{Estimation of $\alpha$ for an algae system (approximately 900 points) observed in a video sequence of 100 frames. (Left) One configuration of the system, corresponding to one frame extracted from the full video, and, overlayed, support of our largest wavelet (in the $x$-direction) for different scales $j$. For $j > 0.95$, the support of the wavelet is largest than the observation window and  border effects may appear.  (Right) Each thin black line corresponds to the curve $\mathcal{C}$ (defined in \eqref{eq_C}) for each frame. The red line corresponds to the average of the narrow black lines.  The green dotted line corresponds to the curve $\mathcal{C}$ for the Poisson process with the same intensity. The full vertical  black line indicates the scale $j_{max}=0.95$ beyond which border effects occur, in agreement with the maximal wavelet support displayed on the left-hand plot. Yellow, pink and blue lines indicate local slopes depending on the scales, with what would be the estimation of $\alpha$ based on these scales. The proper estimation of the hyperuniformity exponent corresponds to the higher scale and leads to $\hat\alpha=1$.
	 }
	 \label{fig_algues}
	\end{figure} 

\section{Proofs of the results of  Section \ref{sec_alpha}}\label{app_proof}
This section compiles the  proofs of the theoretical statements introduced in Section~\ref{sec_alpha}. These proofs pertain first in Section~\ref{sec_clt} to the variance and the central limit theorem of the truncated wavelet transforms $T_j(f, R)$  of  point processes. Furthermore,  they  address the properties of the multi-scale, multi-taper estimator $\widehat{\alpha}(I, J, R)$, which is based on $T_j(f_i, R)$, with discrete scales  $j\in J$ and tapers $f_i$, $i\in I$.
Specifically, this section covers the following aspects: an explanation why the scales used in $\hat\alpha$ must satisfy $j<1$ (Section~\ref{app_finite_size_eff}), the well-defined nature of our estimator (Section~\ref{app_prop_no_atom_0}), its asymptotic properties (Section~\ref{app_cv}), confidence intervals (Section~\ref{app_ICA}), 
as well as the bias/variance trade-off related to the number $|I|$ of tapers (Section~\ref{app_mult_taper}).
To quantify this trade-off, we employ as tapers Hermite wavelets, i.e., $f_i=\psi_i$ is given by \eqref{eq_herm}. For these tapers, we also develop in the last Section~\ref{app_cov_mat} the asymptotic covariance matrix of the truncated wavelet transforms $T_j(\psi_i, R)$.

\subsection{Multivariate central limit theorem for truncated wavelet transforms}\label{sec_clt}
In this section,  we employ the method of cumulants to prove Theorem~\ref{thm_clt}.
We begin by establishing a crucial auxiliary result regarding the second-order moment, which also directly justifies the statement made in Proposition~\ref{prop_variance}.

\begin{lemma}\label{lemma_cov}
	Let $\Phi$ satisfy Assumptions~\ref{ass_rho_leb} and~\ref{ass_S_0} with intensity $\lambda=1$. Let $f_1, f_2 \in \mathcal{S}(\mathbb{R}^d)$ and $j_1, j_2 \in ]0, 1[$. Then, as $R \to \infty$:
	\begin{align*}
		\operatorname{Cov}[R^{\frac{\alpha - d}2 j_1}T_{j_1}(f_1, R)&, R^{\frac{\alpha - d}2 j_2}T_{j_2}(f_2, R)] \\& =   R^{\frac{d+\alpha}2(j_1+j_2)} \int_{\mathbb{R}^d} \mathcal{F}[f_1](R^{j_1} k) \overline{\mathcal{F}[f_2]}(R^{j_2} k) t|k|^{\alpha} dk + o(1).
	\end{align*}
\end{lemma}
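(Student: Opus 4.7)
The plan is to combine Campbell's formula, a change of variables in the Fourier domain, and a careful frequency split to compare $S$ to its leading power $t|\cdot|^\alpha$. Set $h_{i,R}(x) := \mathbf{1}_{[-R,R]^d}(x)\,f_i(x/R^{j_i})$ so that $T_{j_i}(f_i,R) = \sum_{x\in\Phi}h_{i,R}(x)$. By the Campbell--Plancherel identity \eqref{e.prop_campbell} (with $\lambda=1$),
$$\operatorname{Cov}[T_{j_1}(f_1,R),T_{j_2}(f_2,R)] = \int_{\mathbb{R}^d}\mathcal{F}[h_{1,R}](k)\,\overline{\mathcal{F}[h_{2,R}](k)}\,S(k)\,dk,$$
and a change of variables $x=R^{j_i}y$ gives $\mathcal{F}[h_{i,R}](k) = R^{j_i d}\,\mathcal{F}[\tilde f_{i,R}](R^{j_i}k)$, where $\tilde f_{i,R}(y):=f_i(y)\mathbf{1}_{[-R^{1-j_i},R^{1-j_i}]^d}(y)$. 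The prefactor $R^{(j_1+j_2)d}$, multiplied by the target normalization $R^{(\alpha-d)(j_1+j_2)/2}$, matches the expected factor $R^{(d+\alpha)(j_1+j_2)/2}$ in the statement.

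Next, I would replace $\tilde f_{i,R}$ by $f_i$. Since $f_i\in\mathcal{S}(\mathbb{R}^d)$ and $j_i<1$, the remainder $g_{i,R}:=f_i-\tilde f_{i,R}$ satisfies $\|g_{i,R}\|_1 = O(R^{-N})$ for every $N>0$, hence $\|\mathcal{F}[g_{i,R}]\|_\infty = O(R^{-N})$. Expanding $\mathcal{F}[\tilde f_{1,R}]\overline{\mathcal{F}[\tilde f_{2,R}]}$ yields one main term plus three cross terms containing some $\mathcal{F}[g_{i,R}]$. Each cross term is bounded by combining the super-polynomial smallness of $\mathcal{F}[g_{i,R}]$ with the boundedness of $S$, the Schwartz decay of $\mathcal{F}[f_\ell]$, and a change of variable $u=R^{j_\ell}k$ (which yields $\int(1+R^{j_\ell}|k|)^{-M}S(k)\,dk = O(R^{-j_\ell d})$). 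The upshot is that each cross term contributes $O(R^{-N'})$ for every $N'$, hence is negligible after the normalization.

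It then remains to prove that
$$B_2 := R^{(j_1+j_2)d}\int_{\mathbb{R}^d}\mathcal{F}[f_1](R^{j_1}k)\,\overline{\mathcal{F}[f_2](R^{j_2}k)}\,\bigl[S(k)-t|k|^\alpha\bigr]\,dk$$
satisfies $R^{(\alpha-d)(j_1+j_2)/2}|B_2|\to 0$. I would split the domain into $\{|k|\le\delta\}\cup\{|k|>\delta\}$ for small $\delta>0$. On the inner region, Assumption~\ref{ass_S_0} gives $|S(k)-t|k|^\alpha|\le\phi(\delta)|k|^\alpha$ with $\phi(\delta)\to 0$ as $\delta\to 0$, so this piece is bounded by $\phi(\delta)$ times the absolute leading-order integral; a change of variable $u=R^{\min(j_1,j_2)}k$ together with the Schwartz decay of $\mathcal{F}[f_i]$ shows that this integral, after multiplication by $R^{(j_1+j_2)d}$, is $O(R^{(d-\alpha)(j_1+j_2)/2})$ (in fact polynomially smaller when $j_1\neq j_2$, which already foreshadows the asymptotic decorrelation across scales). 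On the outer region, $R^{j_i}|k|\ge R^{j_i}\delta\to\infty$, so $|\mathcal{F}[f_i](R^{j_i}k)|=O_{M,\delta}(R^{-j_i M})$ for every $M$, and the contribution is $O_\delta(R^{-N})$ for every $N$. Sending first $R\to\infty$ and then $\delta\to 0$ yields the desired $o(1)$.

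The main obstacle is this last step: one only has a qualitative vanishing $|S(k)-t|k|^\alpha|=o(|k|^\alpha)$ near the origin, and it must be leveraged jointly with the Schwartz localization of $\mathcal{F}[f_i](R^{j_i}\cdot)$ at scale $R^{-j_i}$ around $k=0$ in order to beat the large prefactor $R^{(j_1+j_2)d}$ and fall below the (already non-trivial when $\alpha$ approaches $d$) normalization $R^{(d-\alpha)(j_1+j_2)/2}$.
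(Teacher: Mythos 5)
Your argument is correct and follows essentially the same route as the paper's proof: Campbell--Plancherel, the rescaling change of variables, removal of the window truncation using the super-polynomial decay of Schwartz tails, and a small/large frequency split combining $S(k)\sim t|k|^{\alpha}$ near the origin with the Schwartz decay of $\mathcal{F}[f_i](R^{j_i}\cdot)$ away from it. The only (cosmetic) differences are that the paper controls the de-truncation errors by Cauchy--Schwarz in $L^2$ rather than via $\|\mathcal{F}[g_{i,R}]\|_{\infty}$ --- which you will in any case need for the term $\mathcal{F}[g_{1,R}]\overline{\mathcal{F}[g_{2,R}]}$, since $S$ is bounded but not integrable --- and that it places the frequency cut at $|k|\le\mu R^{j_1}$ in the rescaled variable instead of at a fixed $\delta$, two choices that are interchangeable.
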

\begin{proof}
	We define for $r > 0$, and $i \in \{i_1, i_2\}$, $f_i^r(x) = f_i(x) \mathbf{1}_{[-r, r]^d}(x)$. Using~\eqref{e.prop_campbell}  and the change of variables $k \leftrightarrow R^{j_1} k$:
	\begin{align*}
		\operatorname{Cov}[R^{\frac{\alpha - d}2 j_1}&T_{j_1}(f, R), R^{\frac{\alpha - d}2 j_2}T_{j_2}(f, R)] \\&= R^{\frac{\alpha - d}2 (j_1+j_2)}\int_{\mathbb{R}^d} R^{dj_1}\mathcal{F}[f_1^{R^{1 -j_1}}](R^{j_1} k) R^{dj_2} \overline{\mathcal{F}[f_2^{R^{1-j_2}}]}(R^{j_2} k) S(k) dk \\& = R^{\frac{d + \alpha}2(j_2 - j_1)} \int_{\mathbb{R}^d} \mathcal{F}[f_1^{R^{1-j_1}}](k) \overline{\mathcal{F}[f_2^{R^{1-j_2}}]}(R^{j_2 - j_1} k) R^{\alpha j_1}S\left(\frac{k}{R^{j_1}}\right ) dk.
	\end{align*}
	In this expression, we would like to replace the truncated functions $\mathcal{F}[f_i^{R^{1-j_i}}]$, $i=1,2$,  by the non-truncated ones $\mathcal{F}[f_i]$, respectively. To do so, we write
	\begin{multline}\label{cov1}
		\operatorname{Cov}[R^{\frac{\alpha - d}2 j_1}T_{j_1}(f_1, R), R^{\frac{\alpha - d}2 j_2}T_{j_2}(f_2, R)] \\ = R^{\frac{d + \alpha}2(j_2 - j_1)} \int_{\mathbb{R}^d}  \mathcal{F}[f_1](k) \overline{\mathcal{F}[f_2]}(R^{j_2 - j_1} k) R^{\alpha j_1}S\left(\frac{k}{R^{j_1}}\right) dk +\delta_R^1 - \delta_R^2,
	\end{multline}
where 
	\begin{align*}
		\delta_R^1 & = R^{\frac{d + \alpha}2(j_2 - j_1)}\int_{\mathbb{R}^d} \{\mathcal{F}[f_1^{R^{1-j_1}}](k) - \mathcal{F}[f_1](k)\} \overline{\mathcal{F}[f_2^{R^{1-j_2}}]}(R^{j_2 - j_1} k) R^{\alpha j_1} S\left(\frac{k}{R^{j_1}}\right) dk,  \\
		\delta_R^2 & =R^{\frac{d + \alpha}2(j_2 - j_1)} \int_{\mathbb{R}^d} \{\overline{\mathcal{F}[f_2^{R^{1-j_2}}]}(R^{j_2 - j_1}k) - \overline{\mathcal{F}[f_2]}(R^{j_2 - j_1}k)\} \mathcal{F}[f_1](k)R^{\alpha j_1} S\left(\frac{k}{R^{j_1}}\right)  dk.
	\end{align*}
	We bound $\delta_R^1$ using Cauchy-Schwarz inequality and Plancherel Theorem:
	\begin{align}\label{eq_delta_1_bound}
		|\delta_R^1| &\leq R^{\frac{d + \alpha}2(j_2-j_1)} \|S\|_{\infty} \Big(\|\mathcal{F}[f_1] - \mathcal{F}[f_1^{R^{1 -j_1}}]\|_2 \times R^{\alpha j_1} \|\mathcal{F}[f_2^{R^{1-j_2}}]\|_2\Big) \nonumber \\& \leq  R^{\frac{d + \alpha}2(j_1+j_2)} \|S\|_{\infty}  \Big(\|f_1 \mathbf{1}_{\mathbb{R}^d \setminus [-R^{1-j_1}, R^{1-j_1}]^d} \|_2 \|f_2\|_2\Big) \nonumber
  		\\& \leq R^{d +\alpha}\|S\|_{\infty}   \|f_1 \mathbf{1}_{\mathbb{R}^d \setminus [-R^{1-j_1}, R^{1-j_1}]^d} \|_2 \|f_2\|_2.
	\end{align}	
	This last term goes to $0$ as $R \to \infty$ because $f_1 \in \mathcal{S}(\mathbb{R}^d)$ and $j_1 < 1$, proving that  $\delta_R^1 \to 0$ as  $R \to \infty$.  For the same reasons, and since $j_2<1$, we also have $\delta_R^2 \to 0$ as  $R \to \infty$. Let us now study the main term in \eqref{cov1}. To exploit the behavior at $k \to 0$ of $S$, we decompose:
\begin{multline}\label{non truncated}
		\int_{\mathbb{R}^d}  \mathcal{F}[f_1](k) \overline{\mathcal{F}[f_2]}(R^{j_2 - j_1} k) R^{\alpha j_1}S\left(\frac{k}{R^{j_1}}\right) dk  \\= \int_{\mathbb{R}^d}  \mathcal{F}[f_1](k) \overline{\mathcal{F}[f_2]}(R^{j_2 - j_1} k) t |k|^{\alpha} dk + \delta^3_R,
	\end{multline} 
where
$$\delta^3_R=\int_{\mathbb{R}^d}  \mathcal{F}[f_1](k) \overline{\mathcal{F}[f_2]}(R^{j_2 - j_1} k) \left(R^{\alpha j_1}S\left(\frac{k}{R^{j_1}}\right) - t|k|^{\alpha}\right) dk.$$
		Let $\epsilon > 0$. Assumption~\ref{ass_S_0}  implies that there exists $\mu > 0$ such that $\forall k' \in \mathbb{R}^d$, if $|k'| \leq \mu$ then  $|S(k') - t|k'|^{\alpha}| \leq \epsilon |k'|^{\alpha}$. It also applies, together with the fact that $S$ is bounded, that there exists $C_S>0$ such that $S(k)|k|^{-\alpha}<C_S$ for all $k\in\mathbb R^d$.  Consequently,
	\begin{align*}
		|\delta^3_R|  \leq & \int_{B(0, \mu R^{j_1})}  \left|\mathcal{F}[f_1](k) \overline{\mathcal{F}[f_2]}(R^{j_2 - j_1} k)\right | R^{\alpha j_1} \left |S\left(\frac{k}{R^{j_1}}\right) - t\left |\frac{k}{R^{j_1}}\right|^{\alpha}\right | dk \\
	&+ \int_{\mathbb{R}^d \setminus B(0, \mu R^{j_1})}  \left|\mathcal{F}[f_1](k) \overline{\mathcal{F}[f_2]}(R^{j_2 - j_1} k)\right | R^{\alpha j_1} \left |S\left(\frac{k}{R^{j_1}}\right) - t\left |\frac{k}{R^{j_1}}\right|^{\alpha}\right | dk \\
	 \leq & \epsilon \int_{B(0, \mu R^{j_1})} \left |\mathcal{F}[f_1](k)\right | \frac1{(2\pi)^{d/2} }\|f_2\|_1 |k|^{\alpha} dk \\
	 &+ \int_{\mathbb{R}^d \setminus B(0, \mu R^{j_1})} \left |\mathcal{F}[f_1](k)\right | \frac1{(2\pi)^{d/2} }\|f_2\|_1 (C_S + t)|k|^{\alpha}dk.
	\end{align*} 
	Since $f_1 \in \mathcal{S}(\mathbb{R}^d)$, we deduce that: $$\forall \epsilon > 0, ~\underset{R \to \infty}{\lim\sup} |\delta^3_R| \leq \epsilon \left\|\mathcal{F}[f_1](\cdot) |\cdot|^{\alpha}\right\|_1 \|f_2\|_1 + 0.$$ Hence, $\delta^3_R \to 0$ as $R \to \infty$. 
	Combining \eqref{cov1} and \eqref{non truncated} with  the change of variable $k \leftrightarrow R^{-j_1} k$, we have proved that:
	\begin{align*}
		\operatorname{Cov}[R^{\frac{\alpha - d}2 j_1}&T_{j_1}(f_1, R), R^{\frac{\alpha - d}2 j_2}T_{j_2}(f_2, R)] \\& = R^{\frac{d+\alpha}2(j_1+j_2)} \int_{\mathbb{R}^d}  \mathcal{F}[f_1](R^{j_1}k) \overline{\mathcal{F}[f_2]}(R^{j_2} k) t |k|^{\alpha} dk +\delta_R^1 - \delta^2_R +  \delta^3_R,
	\end{align*}
	where $\delta_R^1 - \delta^2_R +  \delta^3_R = o(1)$ as $R \to \infty$, which concludes the proof. 
\end{proof}

We will now proceed with the proof of our main multivariate central limit theorem.

\begin{proof}[Proof of Theorem \ref{thm_clt}.]
	Let $(N(i, j, \alpha))_{i \in I, j \in J}$ be a Gaussian vector with  zero mean and covariance matrix $\Sigma$ defined in Equation \eqref{eq_cov_mat}. By the Cram\'er-Wold device, it suffices to prove that 
	\begin{equation}\label{eq_cv_cl}
		A_R:= \sum_{i \in I, j \in J} a_{i, j} R^{\frac{\alpha - d}2 j} T_j(f_i, R) \xrightarrow[R \to \infty]{Law}  \sum_{i \in I, j \in J} a_{i, j} \sqrt t N(i, j, \alpha) =: A,
	\end{equation}
for any family $(a_{i,j})_{i \in I, j \in J}$ of scalars.
	We first establish that the first and second-order moments of $A_R$ converge to those of $A$. Second, we  demonstrate that the cumulants of $A$,  of sufficiently high order, tend to zero as $R$ approaches infinity. This is enough to prove the central limit theorem, as per a classical result attributed  to Marcinkiewicz; see e.g.  \cite[Lemma~3]{soshnikov2002gaussian}.

In addressing the first-order moment 
  we use the assumption that $\int_{\mathbb{R}^d} f_i(x) dx= 0$. Indeed,
	\begin{align*}
		\mathbb{E}\left[A_R\right] &=  \sum_{i \in I, j \in J} a_{i, j}  R^{\frac{\alpha - d}2 j} \int_{[-R, R]^d} f_i\left(\frac{x}{R^j}\right) dx   = \sum_{i \in I, j \in J} a_{i, j}  R^{\frac{\alpha + d}2 j} \int_{[-R^{1- j}, R^{1- j}]^d} f_i(x) dx. 
	\end{align*}
	Since $f_i \in \mathcal{S}(\mathbb{R}^d)$ and $j<1$, we obtain that $\mathbb{E}\left[A_R\right]\to 0 = \mathbb{E}[A]$ as $R\to\infty$.
 
 To address the second order moment of $A_R$, we use  Lemma \ref{lemma_cov}:
\begin{align*}
		\mathbb{E}\left[A_R^2\right] &= \sum_{i_1, i_2 \in I, j_1, j_2 \in J} a_{i_1, j_1} a_{i_2, j_2} 
c_R(j_1, j_2, i_1, i_2) + o(1),
\end{align*}
where $$c_R(j_1, j_2, i_1, i_2):= R^{\frac{d+\alpha}2(j_1+j_2)} \int_{\mathbb{R}^d} \mathcal{F}[f_{i_1}](R^{j_1} k)\overline{\mathcal{F}[f_{i_2}]}(R^{j_2} k) t|k|^{\alpha} dk.$$
	Without lost of generality, we assume that $j_2 \leq j_1$. With the change of variable $k \leftrightarrow R^{j_1} k$:
	\begin{align*}
		c_R(j_1, j_2, i_1, i_2) = R^{\frac{d+\alpha}2(j_2-j_1)} \int_{\mathbb{R}^d} \mathcal{F}[f_{i_1}](k) \overline{\mathcal{F}[f_{i_2}]}(R^{j_2 - j_1} k) t|k|^{\alpha} dk.
	\end{align*} 
	For $j_1 = j_2$, we get $c_R(j_1, j_2, i_1, i_2) =  \int_{\mathbb{R}^d} \mathcal{F}[f_{i_1}](k) \overline{\mathcal{F}[f_{i_2}]}( k) t|k|^{\alpha} dk$, and for $j_2 < j_1$:
	\begin{equation}\label{eq_correlations-j1-j2}
 |c_R(j_1, j_2, i_1, i_2)| \leq R^{\frac{d+\alpha}2(j_2-j_1)} t \|f_{i_2}\|_1 \left\|\mathcal{F}[f_{i_1}](\cdot) | \cdot|^{\alpha}\right \|_1 \xrightarrow[R\to \infty]{} 0.
 \end{equation}
	Accordingly, we have proved that the second order moment converges to the desired limit:
	$$\mathbb{E}\left[A_R^2\right] \xrightarrow[R \to \infty]{} \displaystyle\sum_{i_1,i_2 \in I, j \in J} a_{i_1, j}a_{i_2, j}  \int_{\mathbb{R}^d} \mathcal{F}[f_{i_1}](k) \overline{\mathcal{F}[f_{i_2}]}(k) t|k|^{\alpha} dk  = \mathbb{E}\left[A^2\right].$$

To consider the higher-order cumulants $C_m(A_R)$ of $A_R$ where $m> 2$, we recall the general representation of the cumulants of a random variable $X := \int_{\mathbb{R}^d} g(x)\,\Phi(dx)$, where $g$ is a real-valued, measurable function: 
\begin{equation}
C_m(X)=\sum_{\sigma\in\Pi[m]}
\int_{(\mathbb{R}^{d})^{|\sigma|}}\bigotimes_{i=1}^{|\sigma|}g^{|\sigma(i)|}\,
d\gamma^{(|\sigma|)}\,; \label{eq_cumulant-X}
\end{equation}
here, $\Pi[m]$ is the set of all unordered partitions of the set $\{1,\ldots,m\}$ (and, for a partition $\sigma\in\Pi[m]$ with $|\sigma|$ elements, we arbitrarily order them as $\sigma = \{\sigma(1),\ldots,\sigma(|\sigma| )\}$), $\bigotimes$ denotes the tensor product of functions, and $\gamma^{(r)}$, for $r\geq 1$, denotes the $r$-th order factorial cumulant moment measure of the point process $\Phi$. We refer to  \myappendix{} for further details. 

Note that the random variable  $A_R$ in~\eqref{eq_cv_cl} corresponds to  $X$ with the function 
$$g(x)=\mathbf{1}_{[-R, R]^d}(x)\sum_{i \in I, j \in J} a_{i, j} R^{\frac{\alpha - d}2 j} 
 f_i\left(\frac{x}{R^j}\right).$$
 Using the representation~\eqref{eq_cumulant-X} we have 
 $$|C_m(A_R)|\le \sum_{\sigma\in\Pi[m]} \left(\sum_{i \in I, j \in J} |a_{i, j}| R^{\frac{\alpha - d}2 j} 
 \|f_i\|_\infty\right)^{m}\times |\gamma^{(|\sigma|)}|\left(([-R,R]^d)^{|\sigma|}\right)\,,$$
 
where for $m' \geq 1$, $|\gamma^{(m')}|$ is the total variation of $\gamma^{(m')}$.
Brillinger-mixing condition  implies
$|\gamma^{(m')}|\left(([-R,R]^d)^{m'}\right)=O(R^d)$; see \myappendix{}. Since $\alpha  <d$, we have, for $R > 1$, $R^{\frac{\alpha - d}2 j} \leq R^{\frac{\alpha - d}2 j^*}$, where $j^* = \min\{j|~j \in J\} > 0$, thereby 
$$|C_m(A_R)|\le O(R^{\frac{j^*(\alpha-d)m}2)})\times O(R^d).$$
Consequently, for $m > 2d/(j^*(d - \alpha))$ we have 
$|C_m(A_R)| \to 0$ as $R \to \infty$, which completes the proof. 
 \end{proof}

\subsection{Scales limitations due to  border effects} \label{app_finite_size_eff}

 As mentioned in Remark~\ref{rmk_finite_size_effect}, the limitation of scales to $j < 1$ in assessing the variance rate of $R$-truncated wavelets $T_j(f, R)$, as $R \to \infty$, expressed in Proposition~\ref{prop_variance}, is crucial. 
 An intuitive explanation for this is that when \( j > 1 \), the support of the function \( x \mapsto f(x/R^j) \) eventually extends significantly beyond \( [-R, R]^d \), and the truncation in 
$T_j(f, R) = \sum_{x \in \Phi} \mathbf{1}_{[-R, R]^d}(x) f(x/R^j)$
prevents us from ``capturing'' the variability of the  data across the entire support of \( f \). Indeed, for \( j > 1 \), this truncation reproduces the variance rate of \( \operatorname{Var}\left[\sum_{x \in \Phi} \mathbf{1}_{[-R, R]^d}(x) f(0)\right] \) as \( R \) tends to infinity. 
This differs from the statement of Proposition~\ref{prop_variance}  when considering data from hyperuniformity classes II and III, as discussed in Remark~\ref{rmq_class_HU}.

To present a brief proof regarding this effect, we introduce a smoothly truncated wavelet transform as follows:
\[
\forall j > 0, \quad S_j(R, f, \chi) = \sum_{x \in \Phi} \chi\left(\frac{x}{R}\right) f\left(\frac{x}{R^j}\right),
\]
where \(\chi\) is infinitely differentiable with compact support in $[-1,1]^d$. Here, the function \(\chi(x/R)\) replaces the indicator function \(\mathbf{1}_{[-R, R]^d}\) used in Definition \ref{def_T_j}. The following proposition, especially \eqref{equiv var large scale 0}, demonstrates the previous claim. Using a smooth function $\chi$ instead of the indicator function has yet an impact on the rate of the variance in comparison with the results in Remark~\ref{rmq_class_HU}, as shown in \eqref{equiv var large scale}. Nonetheless \eqref{equiv var large scale 0} remains true for $\chi(x)  = \mathbf{1}_{[-R, R]^d}(x)$, but it requires more technical details that we prefer to skip.  
\begin{proposition}\label{prop_border_eff}
As in Proposition~\ref{prop_variance}, let  $\Phi$ satisfy Assumptions~\ref{ass_rho_leb} and~\ref{ass_S_0}, with intensity $\lambda=1$.
 Let $f \in \mathcal{S}(\mathbb{R}^d)$ with $f(0)\neq 0$, and contrary to Proposition~\ref{prop_variance}, assume that the scale  $j > 1$. Then, as $R\to\infty$, 
\begin{equation}\label{equiv var large scale 0}
 \operatorname{Var}[S_j(R, f,\chi)]\sim \operatorname{Var}\left[\sum_{x \in \Phi} \chi\left(\frac{x}{R}\right) f(0)\right].
\end{equation}
More specifically, we have in this setting 
 	\begin{align}\label{equiv var large scale}
		\lim_{R \to \infty} R^{(\alpha - d)} \operatorname{Var}[S_j(R, f,\chi)] &= |f(0)|^2 \int_{\mathbb{R}^d} |\mathcal{F}[\chi](k)|^2 t|k|^{\alpha} dk.
	\end{align}
 \end{proposition}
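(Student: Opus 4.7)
The plan is to apply the Campbell--Plancherel identity \eqref{e.prop_campbell} to the linear statistic $S_j(R,f,\chi)$, then perform a change of variable that isolates the effect of $\chi$ and reveals that $f$ enters only through $f(0)$ in the limit. More precisely, set $g_R(x):=\chi(x/R)f(x/R^j)$ and observe that, by \eqref{e.prop_campbell},
\begin{equation*}
\operatorname{Var}[S_j(R,f,\chi)] = \int_{\mathbb{R}^d}\left|\mathcal{F}[g_R](k)\right|^2 S(k)\,dk.
\end{equation*}
The substitution $x=Ry$ in the Fourier integral gives $\mathcal{F}[g_R](k)=R^d\,\mathcal{F}\!\left[\chi(\cdot)f(R^{1-j}\cdot)\right](Rk)$. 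The key point is that $j>1$ forces $R^{1-j}\to 0$, so that $f(R^{1-j}y)\to f(0)$ uniformly on the compact support of $\chi$. Then the change of variable $u=Rk$ in the variance integral yields
\begin{equation*}
\operatorname{Var}[S_j(R,f,\chi)] = R^{d}\int_{\mathbb{R}^d}\left|\mathcal{F}\!\left[\chi(\cdot)f(R^{1-j}\cdot)\right](u)\right|^2 S(u/R)\,du.
\end{equation*}

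For \eqref{equiv var large scale}, I multiply both sides by $R^{\alpha-d}$ and split $S(u/R)=t|u/R|^\alpha+\{S(u/R)-t|u/R|^\alpha\}$, so that the main term becomes $t R^{-\alpha}|u|^\alpha$ and I must prove
\begin{equation*}
\int_{\mathbb{R}^d}\left|\mathcal{F}\!\left[\chi(\cdot)f(R^{1-j}\cdot)\right](u)\right|^2 t|u|^\alpha\,du
\xrightarrow[R\to\infty]{} |f(0)|^2\int_{\mathbb{R}^d}|\mathcal{F}[\chi](u)|^2 t|u|^\alpha\,du,
\end{equation*}
while the remainder is controlled with Assumption~\ref{ass_S_0} exactly as in the proof of Lemma~\ref{lemma_cov}: split the domain at $|u|\le \mu R$ versus $|u|>\mu R$, use $|S(k')-t|k'|^\alpha|\le\epsilon|k'|^\alpha$ for $|k'|\le\mu$, and use the boundedness of $S$ together with the rapid decay of $\mathcal{F}[\chi]$ and of $\mathcal{F}[\chi(\cdot)f(R^{1-j}\cdot)]$ for the tail.

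The main convergence above is the technical core. I would establish that $\chi(\cdot)f(R^{1-j}\cdot)\to f(0)\chi(\cdot)$ in a Sobolev-type space controlling the weight $|u|^\alpha$ in Fourier: since $\chi$ is smooth and compactly supported and $f$ is Schwartz, differentiating any finite number of times yields an $L^1$ bound of order $O(R^{1-j})$ for $\chi(\cdot)[f(R^{1-j}\cdot)-f(0)]$ and all its derivatives, which translates through integration by parts into a pointwise decay $|\mathcal{F}\{\chi(\cdot)[f(R^{1-j}\cdot)-f(0)]\}(u)|\le C_N R^{1-j}(1+|u|)^{-N}$ for any $N$; this provides a dominated-convergence envelope against the measure $|u|^\alpha du$ integrated against $|\mathcal{F}[\chi]|$ (say, using the elementary bound $|a|^2-|b|^2=(a-b)\overline a + b\overline{(a-b)}$). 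Finally, \eqref{equiv var large scale 0} is an immediate consequence of \eqref{equiv var large scale} applied twice, once to $S_j(R,f,\chi)$ and once to $S_j(R,f(0),\chi)=f(0)\sum_{x\in\Phi}\chi(x/R)$, since both produce the same asymptotic equivalent $|f(0)|^2 R^{d-\alpha}\int|\mathcal{F}[\chi](u)|^2 t|u|^\alpha du$.

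The main obstacle is the uniform control needed to replace $f(R^{1-j}\cdot)$ by $f(0)$ inside the Fourier transform while keeping the $|u|^\alpha$ weight; this is handled by the Schwartz property of $f$ together with the compact support of $\chi$. The remaining analysis of $S$ near the origin is standard and follows the template of Lemma~\ref{lemma_cov}.
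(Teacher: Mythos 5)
Your argument is correct, and it reaches the same analytical core as the paper by a different organization. The paper's proof is essentially a one-line reduction: it observes the symmetry $S_j(R,f,\chi)=S_{1/j}(R^j,\chi,f)$, so that setting $j'=1/j<1$ and $R'=R^j$ puts the statistic back into the regime of Lemma~\ref{lemma_cov} with the roles of window and taper exchanged ($f(\cdot/R')$ now playing the part of the indicator $\mathbf{1}_{[-R',R']^d}$, and $\chi$ that of the Schwartz taper), and then invokes that lemma ``with almost no changes''. You instead carry out the computation directly: Campbell--Plancherel, the substitution $u=Rk$, and the replacement of $f(R^{1-j}\cdot)$ by $f(0)$ on the compact support of $\chi$, with the $|u|^{\alpha}$-weighted convergence controlled via the uniform rapid-decay envelope $C_N R^{1-j}(1+|u|)^{-N}$ and the structure factor handled as in Lemma~\ref{lemma_cov}. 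The underlying mechanism is identical in both cases — for $j>1$ the taper $f$ degenerates to the constant $f(0)$ on the scale where $\chi$ lives — but your version has the merit of making explicit the step the paper leaves implicit (the adaptation of Lemma~\ref{lemma_cov} to a smooth window), at the cost of redoing estimates the paper reuses. Two cosmetic points: in deducing \eqref{equiv var large scale 0} you apply \eqref{equiv var large scale} to the constant function $f(0)$, which is not Schwartz, so you should say that this case is the direct (and easier) computation $\operatorname{Var}[\sum_x\chi(x/R)f(0)]=|f(0)|^2R^{d}\int|\mathcal{F}[\chi](u)|^2S(u/R)\,du\sim |f(0)|^2R^{d-\alpha}\int|\mathcal{F}[\chi](u)|^2t|u|^{\alpha}du$ rather than an instance of the proposition; and to upgrade the two limits to the asymptotic equivalence $\sim$ you should note that the common limit is nonzero, which holds because $f(0)\neq0$, $t>0$ and $\mathcal{F}[\chi]\not\equiv0$.
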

\begin{proof}
By inverting the role played by $f$ and $\chi$, we note that $S_j(R, f, \chi) = S_{1/j}(R^j, \chi, f)$. We denote $j' = 1/j$ and $R' = R^j$. The proof of Lemma~\ref{lemma_cov} can be adapted with almost no changes if $x \mapsto \mathbf{1}_{[-1, 1]^d}(x/{R'})$ is replaced therein by $x \mapsto f(x/R')$. Thus, as $j' < 1$, we obtain that
	\begin{align*}
		\lim_{R' \to \infty} (R')^{(\alpha-d)j'}\operatorname{Var}[S_{j'}(R', \chi, f)] &=   |f(0)|^2 \int_{\mathbb{R}^d} |\mathcal{F}[\chi](k)|^2 t|k|^{\alpha} dk.
	\end{align*}	
	Using that $(R')^{(\alpha-d)j'} = R^{\alpha - d}$, we get \eqref{equiv var large scale}. We finally note from the proof of Lemma~\ref{lemma_cov}, in particular the non-truncated case considered in \eqref{non truncated} where $f_1=f_2=\chi$ and $j_1=j_2=1$,  that the asymptotic behavior of  $\operatorname{Var}\left[\sum_{x \in \Phi} \chi\left(x/R\right) f(0)\right]$ is similar, proving  \eqref{equiv var large scale 0}. 
\end{proof}

\subsection{Non-zero truncated wavelets and estimator well-definedness}
\label{app_prop_no_atom_0}
In this section, we prove Proposition~\ref{prop_no_atom_0}, ensuring that $R$-truncated wavelets $T_j(f, R;\Phi)$ are almost surely non-null on a non-null realization of the point process $\Phi_R$, provided $f \in \mathcal{S}(\mathbb{R}^d)$ is analytic and non-null. This justifies the construction of the estimator $\widehat{\alpha}(I, J, R)$ in Definition~\ref{def_mult_scale_est} using the logarithmic function.

\begin{proof}[Proof of Propositon \ref{prop_no_atom_0}]
Our goal is to show that 
	$$\delta := \mathbb{P}\left(\sum_{x \in \Phi_R} f(x/R^j) = 0, |\Phi_R| \geq 1\right) = 0.$$
Without loss of generality, we can assume that $R^j = 1$ by considering $f(\cdot/R^j)$ instead of $f$. Utilizing the fact that $|\Phi_R| \geq 1$ and applying the union bound, we obtain:
	\begin{align*}
		\delta &\leq \mathbb{P}\left(\exists x \in \Phi_R,~f(x) + \sum_{y \in \Phi_R\setminus\{x\}} f(y) = 0\right)  \leq \mathbb{E}\left[\sum_{x \in \Phi_R}  \mathbf{1}_{\left\{f(x) + \sum_{y \in \Phi_R\setminus\{x\}} f(y) = 0\right\}}\right].
	\end{align*}
	We now apply the Campbell-Little-Mecke-Matthes theorem (see for instance  \cite{baccelli2020random}) to the right hand side to get
	\begin{align}
		\delta    \leq \mathbb{E}^{!0}\left[\int_{\mathbb{R}^d} \mathbf{1}_{[-R, R]^d}(x) \mathbf{1}_{\left\{f(x) + \sum_{y-x \in \Phi_R} f(y) = 0\right\}} dx \right],\label{eq-delta-E0}
	\end{align}
	where $\mathbb{E}^{!0}$ denotes the expectation under the reduced Palm probability associated to $\Phi$.

To complete the proof, it is enough to show that the set 
of $x\in \mathbb{R}^d$
which satisfy the equation 
\begin{equation}\label{eq-set-zeros}f(x) + \sum_{y \in \Phi}  \mathbf{1}_{[-R, R]^d}(y +x) f(y+x) = 0
\end{equation}
has  null Lebesgue measure, for any realisation of $\Phi$.
Note that this set is random, as it depends on the realization of $\Phi $ on $[-2R, 2R]^d$. 
Given $\Phi\cap [-2R, 2R]^d=\{y_1, \dots, y_n\}$ with $n\geq 0$ ($n=0$ corresponding to the empty set), denote  $B_\Phi := \{x \in [-R, R]^d \, | \, f(x) + \sum_{i = 1}^n \mathbf{1}_{[-R, R]^d}(y_i +x) f(y_i+x) = 0\}$.
To demonstrate that $B_\Phi$ has zero Lebesgue measure, we leverage the analyticity of the function $f$ and, by seeking a contradiction, assume that $B_\Phi$ has a positive Lebesgue measure. By the regularity of the Lebesgue measure \cite{royden1968real}, there exists a point $x_0 \in \mathbb{R}^d$ and a neighborhood of size $h \in \mathbb{R}^d\setminus\{0\}$ such that equation~\eqref{eq-set-zeros} is satisfied for all $x$ within it, that is
$$
\forall x \in [x_0 - h, x_0 +h]^d, \, f(x) + \sum_{i = 1}^n \mathbf{1}_{[-R, R]^d}(y_i +x) f(y_i+x) = 0.
$$
If necessary, we can reduce $|h|$ to obtain:
$$
\forall x \in [x_0 - h, x_0 +h]^d, \, f(x) + \sum_{i \in L_n} f(y_i+x) = 0,
$$
where $L_n$ is a subset of $\{1, \dots, n\}$, or $L_n = \emptyset$. As $f$ is analytic, we apply Lemma \ref{lemma_ana} to deduce that $f \equiv 0$, which contradicts the assumption that $f$ has at least one non-zero value. Consequently, $B_\Phi$ has zero Lebesgue measure and $\delta = 0$.
\end{proof}

\begin{lemma}\label{lemma_ana}
	Let $f \in \mathcal{S}(\mathbb{R}^d)$. We assume that $f$ is analytic and that there exists $n \geq 0$, $(y_1, \dots, y_n) \in (\mathbb{R}^d)^n$, $x_0 \in \mathbb{R}^d$ and $h \in \mathbb{R}^d\setminus\{0\}$ such that, 
	$$\forall x \in [x_0 - h, x_0+h]^d,~f(x) + \sum_{i = 1}^n f(y_i + x) = 0,$$
	with the convention $\sum_{i = 1}^0 f(y_i + x) = 0$. Then, $f \equiv 0$.
\end{lemma}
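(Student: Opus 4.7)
The goal is to upgrade a local identity for the analytic Schwartz function $f$ into a global vanishing statement. I would proceed in two stages: first use real-analyticity to make the identity global on $\mathbb{R}^d$, then use the Schwartz assumption to rule out non-trivial solutions via Fourier analysis.

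\emph{Stage 1 (analytic continuation).} Set $g(x) := f(x) + \sum_{i=1}^n f(y_i + x)$. Since $f$ is real-analytic on $\mathbb{R}^d$ and translations preserve analyticity, $g$ is also real-analytic on $\mathbb{R}^d$. By hypothesis, $g$ vanishes on the open box $[x_0-h, x_0+h]^d$, which has non-empty interior because $h \neq 0$. By the identity theorem for real-analytic functions on a connected open set, $g \equiv 0$ on all of $\mathbb{R}^d$, i.e.\
$$f(x) + \sum_{i=1}^n f(y_i + x) = 0, \qquad \forall x \in \mathbb{R}^d.$$
If $n = 0$, we conclude immediately $f \equiv 0$.

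\emph{Stage 2 (Fourier argument).} Assume $n \geq 1$. Since $f \in \mathcal{S}(\mathbb{R}^d) \subset L^1(\mathbb{R}^d)$, I apply the Fourier transform (with the paper's convention, $\mathcal{F}[f(\cdot + y)](k) = e^{-\bm{i} k\cdot y} \mathcal{F}[f](k)$) to obtain
$$\mathcal{F}[f](k)\,\Bigl(1 + \sum_{i=1}^n e^{-\bm{i}\, k\cdot y_i}\Bigr) = 0, \qquad \forall k \in \mathbb{R}^d.$$
Denote $h(k) := 1 + \sum_{i=1}^n e^{-\bm{i} \,k \cdot y_i}$. This function is real-analytic on $\mathbb{R}^d$ and satisfies $h(0) = 1 + n \neq 0$. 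Hence $h$ is not identically zero, and the identity theorem for real-analytic functions implies that the zero set $\{h = 0\}$ has empty interior (in fact, Lebesgue-null). On its (dense, open) complement, $\mathcal{F}[f](k) = 0$. Since $\mathcal{F}[f]$ is continuous (because $f \in L^1$), it vanishes identically on $\mathbb{R}^d$. Fourier inversion then gives $f \equiv 0$, completing the proof.

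\emph{Anticipated obstacles.} There are no serious obstacles here; the two tools used (the identity theorem for real-analytic functions and continuity of the Fourier transform of an $L^1$ function) are both standard. The only mild subtlety is to notice that one must exploit \emph{both} the analyticity (to extend the local identity to all of $\mathbb{R}^d$) and the Schwartz/$L^1$ regularity (so that $\mathcal{F}[f]$ is well-defined and continuous, allowing passage from ``vanishing almost everywhere'' to ``vanishing everywhere''). The key observation that makes the argument work cleanly is simply that $h(0) = n+1 > 0$, which forces the factor $h(k)$ in the Fourier-side identity to be non-degenerate.
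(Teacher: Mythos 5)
Your proof is correct and follows essentially the same route as the paper's: extend the identity to all of $\mathbb{R}^d$ via the identity theorem for real-analytic functions, then pass to the Fourier side, observe that the trigonometric factor is a non-trivial analytic function whose zero set has empty interior, and conclude by continuity of $\mathcal{F}[f]$. If anything, your write-up is slightly more careful than the paper's, which omits the leading $1$ in the exponential factor and asserts that its zero set is ``discrete'' (in dimension $d \geq 2$ only the empty-interior property holds and is needed), whereas you correctly record $h(0)=n+1\neq 0$ and the null/empty-interior conclusion.
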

\begin{proof}
	Since $f$ is analytic, $x \mapsto f(x) + \sum_{i = 1}^n f(y_i + x)$ is also analytic. Consequently, by the Identity Theorem for analytic functions \cite{mityagin2020zero}, we obtain :
	$$\forall x \in \mathbb{R}^d,~f(x) + \sum_{i = 1}^n f(y_i + x) = 0.$$
	Taking the Fourier transform of the previous equation, we get:
	$$\forall x \in \mathbb{R}^d,~\mathcal{F}[f](x)\sum_{i = 1}^n e^{-\bm{i} x.y_i} = 0.$$
	Thus, $\mathcal{F}[f](x) = 0$ for all $x\in \mathbb{R}^d \setminus Z$ where $Z$ denotes the zero set of the function $h(x) = \sum_{i = 1}^n e^{-\bm{i} x.y_i}$. Because $h$ is analytic, using again the Identity Theorem, we obtain that $Z$ is discrete. Thus, by continuity of $\mathcal{F}[f]$, we deduce $\mathcal{F}[f] \equiv 0$, and so $f \equiv 0$.
\end{proof}

\subsection{Asymptotic results}
\label{app_cv}
In complement to the multivariate central limit theorem proved in Section~\ref{sec_clt}, namely Theorem~\ref{thm_clt}, we address in this  section the proof of Theorem~\ref{thm_cv_alpha} and of Corollary~\ref{cor_ic_mult_scala_est_taper}.
These results yield the consistency of our estimator $\widehat{\alpha}(I, J, R)$, and for the latter its asymptotic distribution derived from Theorem~\ref{thm_clt}.

\begin{proof}[Proof of Theorem \ref{thm_cv_alpha}.]
 We have to show that the random variables $\epsilon(I, J, R)$, defined in~\eqref{eq_decom_al}, 
	converge to $0$ in probability as $R \to \infty$. 
 Firstly, note that the indicator $\mathbf{1}_{|\Phi_R| = 0}$ converges to 0 in probability. This can be observed through the continuity of probability:
\[
\lim_{R\to \infty} \mathbb{P}(|\Phi_R| = 0) = \mathbb{P}(|\Phi| = 0) = 0,
\]
where the last equality is a consequence of the assumption that $\Phi$ is almost surely non-null (which is equivalent to being almost surely  infinite for a stationary point process).
 Now, let's shift our focus to the essential term in~\eqref{eq_decom_al}.

Let $\epsilon > 0$. We now consider: 	$$\mathbb{P}\left(\left|\frac{1}{\log(R)} \log\left(\sum_{i \in I} R^{(\alpha - d)j} T_j(f_i, R)^2\right)\right| \geq \epsilon\right) \leq p_{\infty}(R) + p_0(R),$$
	where $$p_{\infty}(R) = \mathbb{P}\left(\sum_{i \in I} R^{(\alpha - d)j} T_j(f_i, R)^2 \geq R^{\epsilon}\right),$$ 
	$$p_0(R) = \mathbb{P}\left(\sum_{i \in I} R^{(\alpha - d)j} T_j(f_i, R)^2 \leq R^{-\epsilon}\right).$$
	Markov inequality and Proposition \ref{prop_variance} ensure that $p_{\infty}(R) \to 0$ as $R \to \infty$. Concerning $p_0(R)$:
	$$p_0(R) \leq \mathbb{P}\left(R^{(\alpha - d)j} T_j(f_{i_j}, R)^2 \leq R^{-\epsilon}\right).$$
	Let $\delta > 0$. By right-continuity of $t \mapsto \mathbb{P}[X_j^2 \leq t]$, there exists $\mu > 0$, $\mathbb{P}[X_j^2 \leq \mu] \leq \delta$. Let $R_0 = \mu^{-1/\epsilon}$. Then, for all $R \geq R_0,~p_0(R) \leq \mathbb{P}\left(R^{(\alpha - d)j} T_j(f_{i_j}, R)^2 \leq \mu\right)$. Finally, using the convergence in distribution of $R^{(\alpha - d)j} T_j(f_{i_j}, R)^2$ toward $X_j^2$, we obtain, for all $R \geq R_1 \geq~R_0$, then $p_0(R) \leq \mathbb{P}[X_j^2 \leq \mu] + \delta \leq 2 \delta$. Thus, $p_{0}(R) \to 0$ as $R \to \infty$
\end{proof}

\begin{remark}\label{rmk_pb_0_tj}
	The proof of Theorem \ref{thm_cv_alpha} highlights the fact that the assumption concerning the convergence in distribution of one statistic $R^{(\alpha - d)j} T_j(f_{i_j}, R)^2$ toward a non atomic random variable allows one to control  $p_0(R)$ (defined in the proof of Theorem \ref{thm_cv_alpha}), and ensures that $R^{(\alpha - d)j} T_j(f_{i_j}, R)^2$ does not concentrate at $0$. Other assumptions that prevent such a behavior might lead to the consistency of $\widehat{\alpha}(I, J, R)$. For example, if there exists $\beta > 0$ such that for all $j \in J$ there exists $i_j \in I$, such that $\underset{R \to \infty}{\lim\sup}~\mathbb{E}[(R^{(\alpha - d)j} T_j(f_{i_j}, R)^2)^{-\beta} \mathbf{1}_{R^{(\alpha - d)j} T_j(f_{i_j}, R)^2) \leq 1}] < \infty$, then one can prove that, $\widehat{\alpha}(I, J, R)$ converges to $\alpha$ in $L^1(\mathbb{P})$.
\end{remark}

\begin{proof}[Proof of Corollary \ref{cor_ic_mult_scala_est_taper}.]
	Using  the decomposition~\eqref{eq_hat_a-a} with~\eqref{eq_decom_al} we have
	$$\mathbf{1}_{|\Phi_R| \geq 1}\log(R)\left\{\widehat{\alpha}(I, J, R) - \alpha\right\} = \mathbf{1}_{|\Phi_R| \geq 1} \sum_{j \in J} w_j \log\left(\sum_{i \in I} R^{(\alpha - d)j} T_j(f_i, R)^2\right).$$
Note that by Definition~\ref{def_mult_scale_est}, at least one function $f_i$ is not identically zero, so that the limiting distribution in the multivariate convergence stated in Theorem~\ref{thm_clt} is not degenerated. This convergence, combined  with the observation that the indicator $\mathbf{1}_{|\Phi_R| \geq 1}$ converges to 1 in probability (see the proof of Theorem~\ref{thm_cv_alpha}), yields by application of Slutsky's lemma:
	\begin{align*}
		\mathbf{1}_{|\Phi_R| \geq 1} \log(R)\left\{\widehat{\alpha}(I, J, R) - \alpha\right\} &\xrightarrow[R \to \infty]{Law} 1 \times\sum_{j \in J} w_j \log\left(\sum_{i \in I} t N(i, j, \alpha)^2\right)\\&= \log(t)\sum_{j\in J}w_j + \sum_{j \in J} w_j \log\left(\sum_{i \in I} N(i, j, \alpha)^2\right), \\&= 0 + \sum_{j \in J} w_j \log\left(\sum_{i \in I} N(i, j, \alpha)^2\right),
	\end{align*}
	where the last equality is a consequence of  \eqref{eq_sum_wj_j_0}. This completes the proof.
\end{proof}

\subsection{Confidence intervals results}\label{app_ICA}
In this section, we prove Proposition \ref{prop_ICA}. To do so, we introduce some notations and rely on auxiliary results formulated in Lemmas~\ref{lemma_diff_cov},
\ref{lemma_cont_F} and~\ref{lemma_diff_cdf}.

Denote by $Z$ a random variable representing the asymptotic distribution of $\log(R)(\widehat{\alpha} - \alpha)$, as given in Corollary \ref{cor_ic_mult_scala_est_taper}. Specifically,
\[ Z := \sum_{j \in J} w_j \log\left(\sum_{i \in I} N(i, j, \alpha)^2\right), \]
where $(N(i, j, \alpha))_{i \in I, j \in J}$ is a Gaussian vector with zero mean and covariance matrix $\Sigma$ defined in~\eqref{eq_cov_mat}. Let $F_Z(t) = \mathbb{P}(Z \leq t)$ represent the cumulative distribution function of $Z$, and $F_Z^{-1}(q) = \inf\{t\in \mathbb{R} \, | \, F_Z(t) > q\}$ represent its quantile function.
Similarly, let $Z_R(\beta)$ be the random variable defined in Proposition \ref{prop_ICA}, and $F_{Z_R(\beta)}(t)$ and $F^{-1}_{R}(q;\beta)$ be its cumulative distribution and quantile function, respectively. We also denote $\Sigma_R(\beta) :=\Sigma_R$ given by \eqref{eq_cov_mat_R}.

The proof of Proposition \ref{prop_ICA} is divided into several steps as follows:
\begin{itemize}
\item[(i)] Lemma~\ref{lemma_diff_cov}, which is the main technical result, demonstrates that $(\beta, R) \mapsto |\Sigma_R(\beta) - \Sigma|_{\infty}$ is uniformly continuous for $\beta \geq 0$ and $R$ in a neighborhood of $+ \infty$.
\item[(ii)] Before extending this result to the difference of quantile functions, we first show in Lemma \ref{lemma_cont_F} that both the cumulative distribution functions $F_Z$ and the quantile function $F_Z^{-1}$ are continuous.
\item[(iii)] Then, in Lemma~\ref{lemma_diff_cdf}, we extend Lemma~\ref{lemma_diff_cov} by proving that $(\beta, R) \mapsto \|F_{Z_R(\beta)} - F_Z\|_{\infty}$ is uniformly continuous for $\beta \geq 0$ and $R$ in a neighborhood of $+ \infty$. 
\item[(iv)] In the final step, within the proper proof of Proposition \ref{prop_ICA} and thanks to the previous lemmas, we deduce the continuity of $(\beta, R) \mapsto |F_{R}^{-1}(q, \beta) - F_Z^{-1}(q)|$ for $q \in (0, 1)$. \end{itemize} 

\begin{lemma}\label{lemma_diff_cov}
	We suppose the assumptions of Proposition \ref{prop_ICA}. Let $\epsilon > 0$. Then, there exists $\delta > 0$ such that for all $\beta \geq 0$ and $R > 0$ such that $|\alpha - \beta| \leq \delta$ and $R > \delta^{-1}$, then $|\Sigma_R(\beta) - \Sigma|_{\infty} \leq \epsilon$. 
\end{lemma}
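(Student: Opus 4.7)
The plan is to reduce the analysis to entry-wise convergence, since $I$ and $J$ are finite sets and $|\cdot|_\infty$ denotes the matrix sup-norm, and then to treat separately the diagonal entries (indexed by $j_1=j_2$) and the off-diagonal entries ($j_1\neq j_2$). In both cases, the key is a change of variable analogous to the one used in the proof of Lemma~\ref{lemma_cov}.

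For the diagonal entries, setting $k' = R^{j} k$ in the integral defining $(\Sigma_R(\beta))_{(j,j),(i_1,i_2)}$, the Jacobian $R^{-dj}$ and the factor $R^{-\beta j}$ coming from $|k|^\beta = R^{-\beta j}|k'|^\beta$ exactly cancel the prefactor $R^{(\beta+d)j}$, yielding
\[
(\Sigma_R(\beta))_{(j,j),(i_1,i_2)} = \int_{\mathbb{R}^d} \mathcal{F}[f_{i_1}](k)\,\overline{\mathcal{F}[f_{i_2}]}(k)\, |k|^\beta\, dk,
\]
independent of $R$ and $j$. Since $\mathcal{F}[f_{i_1}]\overline{\mathcal{F}[f_{i_2}]}$ is Schwartz, for $\beta$ in a compact neighborhood of $\alpha$ the integrand is dominated by $|\mathcal{F}[f_{i_1}]\overline{\mathcal{F}[f_{i_2}]}|\cdot(1+|k|^{\alpha+\delta_0})$, which is integrable. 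Dominated convergence then gives continuity of $\beta\mapsto (\Sigma_R(\beta))_{(j,j),(i_1,i_2)}$ at $\alpha$, and since the limit equals $\Sigma_{(j,j),(i_1,i_2)}$, the diagonal entries of $\Sigma_R(\beta)-\Sigma$ tend to $0$ uniformly as $\beta\to\alpha$.

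For the off-diagonal entries, assume WLOG $j_2 < j_1$. The same change of variable $k'=R^{j_1}k$ produces
\[
(\Sigma_R(\beta))_{(j_1,j_2),(i_1,i_2)} = R^{\frac{(\beta+d)(j_2-j_1)}{2}}\int_{\mathbb{R}^d} \mathcal{F}[f_{i_1}](k')\,\overline{\mathcal{F}[f_{i_2}]}(R^{j_2-j_1}k')\,|k'|^\beta\,dk',
\]
while $\Sigma_{(j_1,j_2),(i_1,i_2)}=0$. The integral is uniformly bounded, for $\beta$ in any compact interval $[0,B]$ with $B\ge\alpha$, by $\|\mathcal{F}[f_{i_2}]\|_\infty\cdot\int|\mathcal{F}[f_{i_1}](k')|(1+|k'|^B)dk'<\infty$ using that $f_{i_1},f_{i_2}\in\mathcal{S}(\mathbb{R}^d)$. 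The prefactor satisfies $R^{(\beta+d)(j_2-j_1)/2}\le R^{d(j_2-j_1)/2}$ (since $\beta\ge0$ and $j_2-j_1<0$), which tends to $0$ as $R\to\infty$ uniformly in $\beta\ge0$.

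Combining the two cases, for every $\epsilon>0$ one chooses first $\delta_1>0$ so that the diagonal contribution is at most $\epsilon/2$ whenever $|\beta-\alpha|\le\delta_1$, then $R_0$ so large that the off-diagonal contribution is at most $\epsilon/2$ for all $R\ge R_0$ and all $\beta\in[0,\alpha+\delta_1]$; setting $\delta=\min(\delta_1,1/R_0)$ yields the claim. The only subtlety is ensuring the bound on the integral in the off-diagonal case is uniform in $\beta$ near $\alpha$, which is handled by restricting $\beta$ to a bounded interval and invoking the Schwartz decay of the tapers; this is the main (mild) technical point.
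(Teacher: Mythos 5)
Your proof is correct and follows essentially the same route as the paper's: the same reduction to finitely many entries, the same split into diagonal ($j_1=j_2$) and off-diagonal ($j_1\neq j_2$) cases, and the same changes of variables, with the off-diagonal term killed by the decaying prefactor $R^{\frac{\beta+d}{2}(j_2-j_1)}$ against a uniformly bounded integral. The only (immaterial) difference is in the diagonal case, where the paper obtains an explicit quantitative bound of the form $C\,|\beta-\alpha|$ via the mean value inequality applied to $a\mapsto|k|^a$ with an integrable logarithmic weight, whereas you invoke dominated convergence to get mere continuity of $\beta\mapsto\int \mathcal{F}[f_{i_1}]\overline{\mathcal{F}[f_{i_2}]}\,|k|^{\beta}\,dk$ at $\alpha$; both suffice for the stated conclusion.
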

\begin{proof}
	Let $0 < \delta < 1/2$, that will be possibly reduced at the end of the proof. Let $\beta \geq~0$ such that $|\alpha - \beta| \leq \delta$. We bound each coefficient of $\Sigma_R(\beta) - \Sigma$. We first consider the coefficients with $j_1 = j_2 = j$. Let $j \in J$, $i_1, i_2 \in I$, and:
	\begin{align*}
		\Delta_1 :=& (\Sigma_R(\beta))_{i_1, i_2, j, j} - (\Sigma)_{i_1, i_2, j, j}  \\=& R^{(\beta + d)j} \int_{\mathbb{R}^d} \mathcal{F}[f_{i_1}](R^{j} k) \overline{\mathcal{F}[f_{i_2}]}(R^{j} k) |k|^{\beta} dk 
		-  \int_{\mathbb{R}^d} \mathcal{F}[f_{i_1}]( k) \overline{\mathcal{F}[f_{i_2}]}( k) |k|^{\alpha} dk.
	\end{align*}
With the change of variable $k \leftrightarrow R^j k$, we get:
	$$|\Delta_1| \leq \int_{\mathbb{R}^d} \left |\mathcal{F}[f_{i_1}](k) \overline{\mathcal{F}[f_{i_2}]}(k)\right|  \left| |k|^{\beta} - |k|^{\alpha}\right | dk.$$
	Splitting the domain of integration whether $|k|\leq 1$ or $|k|>1$, and using the mean value inequality to $a\mapsto |k|^a$ on $[\alpha,\beta]$ (or $[\beta,\alpha]$ depending on the ordering) we obtain:
	\begin{align*}
		|\Delta_1| &\leq |\beta - \alpha| \Bigg(\int_{|k| \leq 1} \left |\mathcal{F}[f_{i_1}](k) \overline{\mathcal{F}[f_{i_2}]}(k)\right| |\log(|k|)| |k|^{\alpha - \delta} dk \\
		& \hspace{4cm} + \int_{|k| > 1} \left |\mathcal{F}[f_{i_1}](k) \overline{\mathcal{F}[f_{i_2}]}(k)\right| |\log(|k|)| |k|^{\alpha + \delta} dk\Bigg) \\& \leq C_1(i_1, i_2)\, \delta,
	\end{align*}
	with $C_1(i_1, i_2) = \int_{\mathbb{R}^d} |\mathcal{F}[f_{i_1}](k) \overline{\mathcal{F}[f_{i_2}]}(k)| |\log(|k|)| (|k|^{\alpha - 1/2} + |k|^{\alpha + 1/2}) dk$. The fact that $f_{i_1}$ and $f_{i_2}$ are Schwartz functions ensures that $C_1(i_1, i_2) < \infty$. We now consider the case where $j_1 \neq j_2$. Let $i_1, i_2 \in I$, $j_1, j_2 \in J$ with $j_1 < j_2$, and
	\begin{align*}
		\Delta_2 & := (\Sigma_R(\beta))_{i_1, i_2, j_1, j_2} - (\Sigma)_{i_1, i_2, j_1, j_2}  = R^{\frac{\beta  + d}2 (j_1 + j_2)} \int_{\mathbb{R}^d} \mathcal{F}[f_{i_1}](R^{j_1} k) \overline{\mathcal{F}[f_{i_2}]}(R^{j_2} k) |k|^{\beta} dk.
	\end{align*}
	With the change of variable $k \leftrightarrow R^{j_2} k$, we obtain:
	$$|\Delta_2| \leq R^{\frac{\beta + d}2 (j_1 - j_2)} \int_{\mathbb{R}^d} \left|\mathcal{F}[f_{i_1}](R^{j_1-j_2} k) \overline{\mathcal{F}[f_{i_2}]}(k)\right | |k|^{\beta} dk.$$
	Consequently:
	\begin{align*}
		|\Delta_2| &\leq R^{\frac{\alpha - \delta + d}2 (j_1 - j_2)}\left( \int_{|k| \leq 1} \|f_{i_1}\|_1 \left |\mathcal{F}[f_{i_2}](k)\right| dk + \int_{|k| \geq 1} \|f_{i_1}\|_1 \left|\mathcal{F}[f_{i_2}](k)\right | |k|^{\alpha + \delta} dk\right) \\& \leq R^{\frac{\alpha + d-1/2}2 (j_1 - j_2)} C_2(i_1, i_2),
	\end{align*}
	where $C_2(i_1, i_2)= \|f_{i_1}\|_1\left( \int_{|k| \leq 1} |\mathcal{F}[f_{i_2}](k)| dk + \int_{|k| \geq 1} |\mathcal{F}[f_{i_2}](k)| |k|^{\alpha + 1/2} dk\right)$. As $f_{i_1}$ and $f_{i_2}$ are Schwartz functions, $C_2(i_1, i_2) < \infty$. Finally, denoting by $C=\max_{i_1,i_2\in I}(C_1(i_1,i_2) + C_2(i_1,i_2))$,  the bounds on $\Delta_1$ and $\Delta_2$ and the fact that $R>1/\delta$ imply:
	\begin{align*}
	|\Sigma_R(\beta) - \Sigma|_{\infty}&\leq C\left(\delta + \max_{j_1, j_2 \in J,\, j_1 < j_2} R^{\frac{\alpha + d -1/2}2 (j_1 - j_2)}\right)\\&\leq C\left(\delta + \max_{j_1, j_2 \in J,\, j_1 < j_2} \delta^{\frac{\alpha + d -1/2}2 (j_2 - j_1)}\right).
	\end{align*}
	The proof is concluded by choosing $0 < \delta < 1/2$ small enough so that the latter bound is less than $\epsilon$, which is possible since $J$ is finite and  $\alpha + d-1/2 \geq \alpha +1/2 > 0$. 
	\end{proof}

\begin{lemma}\label{lemma_cont_F}
	Under the setting of Proposition \ref{prop_ICA},  $F_Z$ and $F_{Z}^{-1}$ are continuous.
\end{lemma}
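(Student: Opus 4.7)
The plan is to prove that $Z$ admits a density that is continuous and strictly positive on all of $\mathbb{R}$. This single property delivers both conclusions: continuity of the density forces $F_Z$ to be continuous, and strict positivity makes $F_Z$ strictly increasing on $\mathbb{R}$, which is equivalent to continuity of its quantile function $F_Z^{-1}$ on $(0,1)$.

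First I would exploit the block structure of $\Sigma$ given in \eqref{eq_cov_mat}: the covariance between $N(i_1,j_1,\alpha)$ and $N(i_2,j_2,\alpha)$ vanishes whenever $j_1 \neq j_2$, and uncorrelatedness in a jointly Gaussian vector is equivalent to independence. Thus the sub-vectors $(N(i,j,\alpha))_{i\in I}$ are mutually independent across $j\in J$. Writing $U_j := \sum_{i\in I}N(i,j,\alpha)^2$, the random variables $(U_j)_{j\in J}$ are therefore independent, and $Z = \sum_{j\in J} w_j \log(U_j)$ is a sum of independent summands.

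Next I would show that each $\log(U_j)$ has a continuous density that is strictly positive on all of $\mathbb{R}$. The within-block covariance matrix $K_j = \bigl(\int_{\mathbb{R}^d}\mathcal{F}[f_{i_1}](k)\overline{\mathcal{F}[f_{i_2}]}(k)|k|^\alpha\, dk\bigr)_{i_1,i_2\in I}$ is positive semi-definite, and by Definition~\ref{def_mult_scale_est} at least one taper $f_{i_0}$ is analytic and not identically zero, so $\int|\mathcal{F}[f_{i_0}]|^2|k|^\alpha\, dk > 0$ and $K_j$ has rank $r_j\geq 1$. Diagonalising shows that $U_j$ has the distribution of $\sum_{\ell=1}^{r_j}\lambda_\ell \xi_\ell^2$ with $\lambda_\ell>0$ and i.i.d.\ standard Gaussians $\xi_\ell$; this generalized chi-squared law has a density that is continuous and strictly positive on $(0,\infty)$ (direct for $r_j=1$ via the $\chi^2(1)$ density, and for $r_j\geq 2$ by induction using the convolution formula, since each summand $\lambda_\ell\xi_\ell^2$ has this property). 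The change of variables $u\mapsto \log u$ then endows $\log(U_j)$ with a continuous and strictly positive density on all of $\mathbb{R}$.

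Finally, condition \eqref{eq_sum_wj_j_1} gives $\sum_j jw_j = 1 \neq 0$, so there exists $j_0\in J$ with $w_{j_0}\neq 0$, and the summand $w_{j_0}\log(U_{j_0})$ inherits a continuous and strictly positive density on $\mathbb{R}$. Decomposing $Z = w_{j_0}\log(U_{j_0}) + Y$ with $Y$ independent of $U_{j_0}$, the density of $Z$ is obtained by convolving a continuous strictly positive density against the law of $Y$, which preserves both continuity and strict positivity on $\mathbb{R}$. This yields the desired conclusion. The only mildly delicate step is the strict positivity of the density of $U_j$ on $(0,\infty)$ when $r_j\geq 2$, but this reduces to the classical stability of strict positivity on $(0,\infty)$ under convolution of continuous densities, and no deeper input is required.
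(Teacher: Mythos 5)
Your proposal is correct, and it reaches the same two conclusions as the paper by a genuinely different route in its second half. The first half coincides with the paper's argument: both exploit the block-diagonal structure of $\Sigma$ to get independence of the blocks across $j\in J$, diagonalise each block to write $U_j$ (the paper's $V_j$) as a positive linear combination of independent $\chi^2(1)$ variables with at least one strictly positive coefficient, and conclude that $Z$ is a continuous random variable. Where you diverge is the continuity of $F_Z^{-1}$: the paper shows directly that $F_Z$ is strictly increasing by a hands-on probabilistic lower bound, conditioning on the events $\{|V_j-1|\leq\epsilon\}$ for $j\neq j_0$ and then on one coordinate $U(i_{j_0},j_0)$ lying in a suitable interval, so that $\mathbb{P}(t_1<Z\leq t_2)>0$ for all $t_1<t_2$. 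You instead establish that $Z$ admits a continuous, strictly positive density on all of $\mathbb{R}$ by pushing the generalized chi-squared density through the logarithm and convolving against the law of the remaining independent summands. Your route is more structural and yields a strictly stronger conclusion ($F_Z\in C^1$ with nowhere-vanishing derivative, which also gives the uniform continuity of $F_Z$ used later in Lemma~\ref{lemma_diff_cdf} for free), at the cost of two standard but nontrivial verifications that you correctly flag: continuity of the convolution of densities with integrable singularities at the origin (for $r_j\geq 2$), and preservation of continuity when convolving the bounded continuous density of $w_{j_0}\log(U_{j_0})$ against an arbitrary probability law (which follows since that density is continuous, bounded and vanishes at $\pm\infty$, hence uniformly continuous). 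The paper's argument is more elementary and self-contained, requiring nothing beyond positivity of certain explicit probabilities; both are valid proofs of the lemma.
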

\begin{proof}
	Before proving both results, we introduce a useful representation of $Z$ with independent random variables. Let $(\lambda(i, j))_{i \in I, j \in J}$ be the non-negative eigenvalues of the covariance matrix $\Sigma$ defined in \eqref{eq_cov_mat}. Then:
	\begin{equation}\label{representation Z}
Z \overset{Law}{=} \sum_{j \in J} w_j \log V_j,
	\end{equation}
	where $$V_j=\sum_{i \in I} \lambda(i, j) U(i,j),$$
	and $(U(i,j))_{i \in I, j \in J}$ are i.i.d. $\chi^2(1)$ random variables.
Note that for all $j \in J$:
	$$\sum_{i \in I} \lambda(i, j) =  \sum_{i \in I} \int_{\mathbb{R}^d} \left |\mathcal{F}[f_i](k)\right|^2 |k|^{\alpha} dk > 0.$$
As a consequence, for any $j \in J$, there exists at least one index $i_j \in I$ such that $\lambda(i_j, j) > 0$, so that $V_j$ is a non-degenerated continuous random variable. We deduce that the random variables $\log V_j$,  $j\in J$, are independent continuous random variables. Since there exists  $j_0$ such that $w_{j_0} \neq 0$, $Z$ is also a continuous random variable and $F_Z$ is continuous.

	We now prove that $F_Z^{-1}$ is continuous. According to the properties of the quantile function, it suffices to prove that $t \mapsto F_Z(t)$ is strictly increasing, i.e. 
	for any $t_1 < t_2$, $F_{Z}(t_2) - F_{Z}(t_1) = \mathbb{P}(t_1 <  Z \leq t_2)>0$. Let $j_0$ such that $w_{j_0} \neq 0$, and let $0 < \epsilon < 1/2$ that will be chosen small enough in the following. 
\begin{align*}
		\mathbb{P}(t_1 < Z \leq t_2) &= \mathbb{P}\left(t_1 < \sum_{j \in J} w_j \log V_j \leq t_2\right)  \\
		&\geq \mathbb{P}\left(t_1 < \sum_{j \in J} w_j \log V_j \leq t_2,\ \forall j \neq j_0,~ \left|V_j- 1\right| \leq \epsilon\right).
	\end{align*}
	According to the mean value inequality, for all $y \in \mathbb{R}$ such that $|y-1| \leq \epsilon < 1/2$ then $|\log(y)| \leq 2 \epsilon$. Therefore, if for all $ j \neq j_0$, $\left|V_j - 1\right| \leq \epsilon$, then $\left|\sum_{j \in J, j \neq j_0} w_j \log V_j \right| \leq 2 \epsilon \sum_{j \in J, j \neq j_0} |w_j|$, so that
	\begin{multline*}
		\mathbb{P}(t_1< Z \leq t_2)  \geq \\ \mathbb{P}\left(t_1 +  2 \epsilon \sum_{j \in J, j \neq j_0} |w_j| < w_{j_0} \log V_{j_0} \leq t_2- 2 \epsilon \sum_{j \in J, j \neq j_0} |w_j|, \ \forall j \neq j_0,~ \left|V_j - 1\right| \leq \epsilon\right).
	\end{multline*}
Let $\epsilon$ be small enough such that $2 \epsilon \sum_{j \in J, j \neq j_0} |w_j| \leq (t_2 -t_1)/4$. Then, by independence,
	\begin{align*}
		\mathbb{P}(t_1 < Z \leq t_2) & \geq \mathbb{P}\Big(\frac{3t_1+t_2}4 < w_{j_0} \log V_{j_0} \leq \frac{t_1+3t_2}4\Big) \times	\prod_{j \neq j_0} \mathbb{P}\left(\left|V_j - 1\right| \leq \epsilon\right) \\
		& \geq \mathbb{P}\left(t_3 \leq  V_{j_0} \leq t_4\right)\prod_{j \neq j_0} \mathbb{P}\left(1 - \epsilon \leq V_j \leq 1+ \epsilon\right),
	\end{align*}
	where $t_3 =u\wedge v$ and $t_4=u\vee v$, with $\log u=(3t_1+t_2)/(4w_{j_0})$ and 
	$\log v=(t_1+3t_2)/(4w_{j_0})$. We use similar arguments to prove that each probability in the above lower-bound is positive. Note that if  for all $i \neq i_{j_0}$, $U(i, j_0) \leq \epsilon$, then $V_{j_0} \leq \epsilon \sum_{i \in I, i \neq  i_{j_0}} \lambda(i, j_0)$. Therefore, as before, we may choose  $\epsilon$ small enough to obtain
	\begin{align*}
		\mathbb{P}\left(t_3 \leq  V_{j_0} \leq t_4\right) &\geq \mathbb{P}\left(\frac{3 t_3+t_4}{4 \lambda(i_{j_0}, j_0)} \leq U(i_{j_0},j_0) \leq \frac{t_3+3t_4}{4 \lambda(i_{j_0}, j_0)}\right)  \prod_{i \neq j_0} \mathbb{P}\left(U(i, j_0) \leq \epsilon\right),
	\end{align*}
	which is a positive quantity  since $(U(i,j))_{i \in I, j \in J}$ are i.i.d. $\chi^2(1)$. We may prove similarly that 	$\prod_{j \neq j_0} \mathbb{P}\left(1 - \epsilon \leq V_j \leq 1+ \epsilon\right)
 > 0$.  We thus obtain that for all $t_1 < t_2$, $F_Z(t_2) - F_2(t_1) > 0$, concluding the proof.
\end{proof}

\begin{lemma}\label{lemma_diff_cdf}
	We suppose the assumptions of Proposition \ref{prop_ICA}. Let $\epsilon > 0$. There exists $\delta > 0$ such that for all $\beta \geq 0$ and $R > 0$ such that $|\alpha - \beta| \leq \delta$ and $R > \delta^{-1}$, then:
	$$\|F_{Z_R(\beta)} - F_{Z}\|_{\infty} \leq \epsilon.$$
\end{lemma}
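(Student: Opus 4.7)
The plan is to proceed by contradiction, reducing the uniform closeness of the CDFs to a statement about convergence in distribution along sequences. Suppose the conclusion fails: there exist $\epsilon_0 > 0$ and sequences $\beta_n \geq 0$, $R_n > 0$ with $|\beta_n - \alpha| \to 0$ and $R_n \to \infty$ such that $\|F_{Z_{R_n}(\beta_n)} - F_Z\|_\infty \geq \epsilon_0$. I will show that, in fact, $Z_{R_n}(\beta_n)$ converges in distribution to $Z$; combined with the continuity of $F_Z$ established in Lemma~\ref{lemma_cont_F} and P\'olya's theorem --- which asserts that pointwise convergence of CDFs to a continuous limit is automatically uniform --- this contradicts the assumption on $(\beta_n, R_n)$.

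To establish this convergence in distribution, I would construct an explicit coupling based on the symmetric square roots of the covariance matrices. Let $\xi$ be a fixed standard Gaussian vector in $\mathbb{R}^{|I||J|}$, and set $N^{(n)} := \Sigma_{R_n}(\beta_n)^{1/2}\,\xi$ and $N := \Sigma^{1/2}\,\xi$, where $\Sigma^{1/2}$ denotes the unique symmetric positive semi-definite square root. Then $N^{(n)}$ has the law of $(N(i,j,R_n,\beta_n))_{i,j}$ and $N$ has the law of $(N(i,j,\alpha))_{i,j}$. By Lemma~\ref{lemma_diff_cov}, $|\Sigma_{R_n}(\beta_n) - \Sigma|_\infty \to 0$; since the symmetric square root is continuous on the cone of symmetric positive semi-definite matrices, $\Sigma_{R_n}(\beta_n)^{1/2} \to \Sigma^{1/2}$, and therefore $N^{(n)} \to N$ in $L^2$, hence in probability.

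Writing $V_j^{(n)} := \sum_{i \in I}(N^{(n)}_{i,j})^2$ and $V_j := \sum_{i \in I}(N_{i,j})^2$, we deduce $V_j^{(n)} \to V_j$ in probability for each $j \in J$. The proof of Lemma~\ref{lemma_cont_F} shows that $\sum_{i\in I}\lambda(i,j) > 0$ for every $j$, from which $V_j > 0$ almost surely; thus, by continuity of $\log$ on $(0,\infty)$, $\log V_j^{(n)} \to \log V_j$ in probability. Taking the weighted sum with the $w_j$ yields $Z_{R_n}(\beta_n) \to Z$ in probability, which in particular gives convergence in distribution, and the contradiction.

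The main delicate point is the continuity of the symmetric square root on the (possibly degenerate) positive semi-definite cone --- a standard consequence of the functional calculus on PSD matrices. Apart from this, the argument is a routine chain of continuity-type reductions, relying crucially on the preparatory Lemmas~\ref{lemma_diff_cov} and~\ref{lemma_cont_F}, together with P\'olya's uniform convergence theorem.
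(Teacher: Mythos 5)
Your argument is correct, but it follows a genuinely different route from the paper's. The paper proves the lemma \emph{directly and quantitatively}: it couples $Z_R(\beta)$ and $Z$ through a common family of i.i.d.\ $\chi^2(1)$ variables applied to the (identically ordered) eigenvalues of $\Sigma_R(\beta)$ and $\Sigma$, writes $F_{Z_R(\beta)}(t)=\mathbb{P}(Z\le t+\Delta(R,\beta))$, controls the eigenvalue perturbation via a Hoffman--Wielandt-type inequality from Lemma~\ref{lemma_diff_cov}, bounds the log-ratio $\Delta(R,\beta)$ on a high-probability event involving Fisher--Snedecor-type ratios $Y_j$ with finite fractional moments, and concludes with the uniform continuity of $F_Z$. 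You instead argue \emph{softly, by contradiction}: a coupling through the symmetric PSD square root applied to a common standard Gaussian, continuity of $A\mapsto A^{1/2}$ on the PSD cone, the continuous mapping theorem (using $V_j>0$ a.s., which does follow from the positivity of the traces established in the proof of Lemma~\ref{lemma_cont_F}), and P\'olya's theorem to upgrade pointwise to uniform convergence of the distribution functions. Both proofs rest on the same two pillars --- Lemma~\ref{lemma_diff_cov} for the covariance matrices and the continuity of $F_Z$ from Lemma~\ref{lemma_cont_F} --- but yours avoids the eigenvalue-ordering subtlety and the moment estimates on the $Y_j$'s, at the price of yielding no explicit relation between $\delta$ and $\epsilon$; the paper's computation, while heavier, keeps the dependence quantitative, which is in the spirit of the rest of Section~\ref{app_ICA} even though the lemma as stated does not require it.
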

\begin{proof}
	We start with the same representation as in the beginning of the proof of Lemma~\ref{lemma_cont_F}. 
	Let $(\lambda(i, j, R, \beta))_{i \in I, j \in J}$ (resp.  $(\lambda(i, j))_{i \in I, j \in J}$)  be the non-negative eigenvalues of the covariance matrix $\Sigma_R$ (resp. $\Sigma$) defined in \eqref{eq_cov_mat_R} (resp. \eqref{eq_cov_mat}). We order them using the lexicographic order over $I \times J$, i.e., for instance:
	$$\lambda(i_1, j_1) \leq \dots \leq\lambda(i_{|I|}, j_1) \leq \lambda(i_1, j_2) \leq \dots \leq \lambda(i_{|I|}, j_{|J|}).$$
	Similar to the representation of $Z$ in \eqref{representation Z}, we also have the following equality in distribution:
	$$Z_R(\beta) \overset{Law}{=} \sum_{j \in J} w_j \log\left(\sum_{i \in I} \lambda(i, j, R, \beta) U(i,j)\right),$$
	where $(U(i,j))_{i \in I, j \in J}$ are i.i.d. $\chi^2(1)$ random variables. Moreover, we proved in the beginning of the proof of Lemma~\ref{lemma_cont_F} that for all $j \in J$ there exists at least one index $i_j \in I$ such that $\lambda(i_j, j) > 0$.
	Let $t \in \mathbb{R}$. Using the previous representations,  we write $F_{Z_R(\beta)}(t)$ as:
	\begin{align}\label{fdr Z_R}
		F_{Z_R(\beta)}(t) 
		& =  \mathbb{P}\left(Z \leq t + \Delta(R, \beta)\right),
	\end{align}
	where 
	\begin{align*}
		\Delta(R, \beta) &= \sum_{j \in J} w_j \left(\log\left(\sum_{i \in I} \lambda(i, j) U(i,j)\right)- \log\left(\sum_{i \in I} \lambda(i, j, R, \beta) U(i,j)\right)\right) \\& = -\sum_{j \in J} w_j \log\left( 1+ \frac{\sum_{i \in I} (\lambda(i, j, R, \beta) - \lambda(i, j))U(i,j)}{\sum_{i \in I} \lambda(i, j) U(i,j)}\right).
	\end{align*}
	Let $\epsilon_1 > 0$ that will be chosen at the end of the proof. According to Lemma \ref{lemma_diff_cov}, there exists $\delta$ such that for $|\alpha - \beta| \leq \delta$ and $R > \delta^{-1}$, then $\|\Sigma_R(\beta) -\Sigma\|_{\infty} \leq \epsilon_1 |I|^{-2} |J|^{-2}$. Since $(\lambda(i, j))_{i \in I, j \in J}$ and $(\lambda(i, j, R, \beta))_{i \in I, j \in J}$ are both ordered in the same way, we can apply  Corollary~6.3.8 of~\cite{horn2012matrix} to get
	$$\sum_{i \in I, j \in J}|\lambda(i, j, R, \beta) - \lambda(i, j)|^2 \leq \sum_{i_1, i_2 \in I, j_1, j_2 \in J} \left((\Sigma_R(\beta) -\Sigma)_{i_1, i_2, j_1, j_2}\right)^2 \leq \epsilon_1^2.$$
	We introduce $I_0(j) = \{i \in I| \lambda(i, j) > 0\}$. Recall that $I_0(j)$ is non-empty. Consequently, 
	\begin{align*}
		\left|\frac{\sum_{i \in I} (\lambda(i, j, R, \beta) - \lambda(i, j))U(i,j)}{\sum_{i \in I} \lambda(i, j) U(i,j)}\right| &\leq \frac{\left(\sum_{i \in I, j \in J} |\lambda(i, j, R, \beta) - \lambda(i, j)|^2\right)^{1/2}\sum_{i \in I} U(i,j)}{\sum_{i \in I} \lambda(i, j) U(i,j)} \\& \leq \frac{\epsilon_1}{\min_{i \in I_0(j)} \lambda(i, j)} \frac{\sum_{i \in I} U(i,j)}{\sum_{i \in I_0(j)} U(i,j)} = \epsilon_1 \, Y_j,
		\end{align*} 
where 
\begin{equation}\label{def Y}
 Y_j = \frac{1}{\min_{i \in I_0(j)} \lambda(i, j)} \left(1 + \frac{\sum_{i \notin I_0(j)} U(i,j)}{\sum_{i \in I_0(j)} U(i,j)}\right).
 \end{equation}
If for all $j \in J$, $|Y_j| \leq \epsilon_1^{-1/2}$, then 
$$\left|\frac{\sum_{i \in I} (\lambda(i, j, R, \beta) - \lambda(i, j))U(i,j)}{\sum_{i \in I} \lambda(i, j) U(i,j)}\right| \leq \epsilon_1^{1/2}.$$
Let us choose $\epsilon_1$ so that $\epsilon_1^{1/2} < 1/2$. Using the fact that for all $y \in \mathbb{R}$ such that $|y-1| \leq \epsilon_1^{1/2}< 1/2$ then $|\log(y)| \leq 2 \epsilon_1^{1/2}$, we deduce:
\begin{equation}\label{ineq Delta}
|\Delta(R, \beta)| \leq 2 \epsilon_1^{1/2} \sum_{j \in J} |w_j|.
\end{equation}
Note that for all $j\in J$, $Y_j$ follows (up to a constant) a Fisher-Snedecor distribution with $(|I\setminus I_0(j)|,|I_0(j)|)$ degrees of freedom. Since $|I_0(j)|\geq 1$, $\mathbb{E}[Y_j^{1/4}] < \infty$.
From \eqref{fdr Z_R}, using~\eqref{ineq Delta} and the Markov inequality,  we deduce
\begin{align}\label{bound F_R}
		F_{Z_R(\beta)}(t) &\leq \mathbb{P}\left(Z \leq t + \Delta(R, \beta) ,~\forall j \in J, |Y_j| \leq \epsilon_1^{-1/2}\right) + \sum_{j \in J} \mathbb{P}\left(Y_j \geq \epsilon_1^{-1/2}\right) \nonumber 
		\\& \leq  \mathbb{P}\left(Z  \leq t + 2 \epsilon_1^{1/2} \sum_{j \in J} |w_j|\right) + \sum_{j \in J} \mathbb{E}[Y_{j}^{1/4}]\epsilon_1^{1/8}. 
	\end{align}
	Let $\epsilon>0$. Using the fact that $F_Z$ is continuous  (see Lemma \ref{lemma_cont_F}) with limits at $- \infty$ and $+\infty$, one can prove that $F_Z$ is uniformly continuous. As a consequence, there exists $\delta_2 > 0$ such that for all $|t_1-t_2| \leq \delta_2$ then:
	$$|F_{Z}(t_1) - F_{Z}(t_2)| \leq \epsilon/2.$$
	Take $\epsilon_1$  small enough such that $2 \epsilon_1^{1/2} \sum_{j \in J} |w_j| \leq \delta_2,$ and $\sum_{j \in J} \mathbb{E}[Y_{j}^{1/4}]\epsilon_1^{1/8} \leq \epsilon/2$. We obtain from \eqref{bound F_R}, for all $|\beta - \alpha| \leq \delta = \delta(\epsilon_1)$ and $R > \delta^{-1}$:
	$$\forall t \in \mathbb{R},~F_{Z_R(\beta)}(t) \leq F_Z\left(t + 2 \epsilon_1^{1/2} \sum_{j \in J} |w_j|\right) + \sum_{j \in J} \mathbb{E}[Y_{j}^{1/4}]\epsilon_1^{1/8} \leq F_Z(t) + \epsilon.$$
	We may prove similarly the reverse inequality. Indeed, for all $|\beta - \alpha| \leq \delta$ and $R > \delta^{-1}$:
	\begin{align*}
		F_{Z_R(\beta)}(t) &\geq \mathbb{P}\left(Z \leq t + \Delta(R, \beta) ,~\forall j \in J, |Y_j| \leq \epsilon_1^{-1/2}\right)  
		\\& \geq  \mathbb{P}\left(Z  \leq t - 2 \epsilon_1^{1/2} \sum_{j \in J} |w_j|, ~\forall j \in J, |Y_j| \leq \epsilon_1^{-1/2}\right) \\& = \mathbb{P}\left(Z  \leq t - 2 \epsilon_1^{1/2} \sum_{j \in J} |w_j|\right) - \mathbb{P}\left(Z  \leq t - 2 \epsilon_1^{1/2} \sum_{j \in J} |w_j|, ~\exists j \in J, |Y_j| > \epsilon_1^{-1/2}\right) \\& \geq  F_Z\left(t - 2 \epsilon_1^{1/2} \sum_{j \in J} |w_j|\right) - \sum_{j \in J} \mathbb{P}\left(|Y_j| > \epsilon_1^{-1/2}\right) \geq F_Z(t) - \epsilon,
	\end{align*} 
	which concludes the proof.
\end{proof}

We can now prove Proposition \ref{prop_ICA}.

\begin{proof}[Proof of Proposition \ref{prop_ICA}.]
	We denote for brevity $\widehat{\alpha}_+  = \widehat{\alpha}(I, J, R) \vee 0$ and we let
	$$p_R := \mathbb{P}[F_R^{-1}(a/2; \widehat{\alpha}_+) \mathbf{1}_{|\Phi_R| \geq 1} \leq \mathbf{1}_{|\Phi_R| \geq 1} \log(R)(\widehat{\alpha} - \alpha) \leq \mathbf{1}_{|\Phi_R| \geq 1}F_R^{-1}(1-a/2; \widehat{\alpha}_+)].$$
	We have to prove that $p_R \to 1 -a$ as $R \to \infty$. 
	We can rewrite $p_R$ as:
	$$p_R = \mathbb{P}(|\Phi_R| = 0) + \mathbb{P}[F_R^{-1}(a/2; \widehat{\alpha}_+) \leq  \log(R)(\widehat{\alpha} - \alpha) \leq  F_R^{-1}(1-a/2; \widehat{\alpha}_+), |\Phi_R| \geq 1].$$ 
	
	Let $q \in \{a/2, 1-a/2\}$. We gather the previous lemmas to show that $F_R^{-1}(q; \widehat{\alpha}_+)$ converges to $F_Z^{-1}(q; \alpha)$ in probability. 
		Let $\epsilon > 0$. According to Lemma \ref{lemma_cont_F}, $F_Z^{-1}$ is continuous, so there exists $\epsilon_2 > 0$ such that:
	\begin{equation}\label{eq_cont_quantile}
		F_Z^{-1}(q) - \epsilon \leq F_Z^{-1}(q - \epsilon_2) \leq  F_Z^{-1}(q + \epsilon_2) \leq F_Z^{-1}(q) + \epsilon.
	\end{equation}
Consider the parameter $\delta >0$ given by Lemma \ref{lemma_diff_cdf} applied to $\epsilon_2$ and let $\beta \geq 0$. Accordingly, for all  $|\beta - \alpha| \leq \delta$ and $R > \delta^{-1}$:
	$$\forall t \in \mathbb{R},~F_Z(t) - \epsilon_2 \leq F_{Z_R(\beta)}(t) \leq F_Z(t) +  \epsilon_2,$$
	that implies by definition of the quantile function:
	$$F_Z^{-1}(q - \epsilon_2) \leq F_R^{-1}(q; \beta) \leq F_Z^{-1}(q + \epsilon_2).$$
	Using \eqref{eq_cont_quantile}, we deduce that for all  $|\beta - \alpha| \leq \delta$ and $R > \delta^{-1}$:
	$$F_Z^{-1}(q) - \epsilon \leq F_Z^{-1}(q - \epsilon_2) \leq F_R^{-1}(q; \beta) \leq F_Z^{-1}(q + \epsilon_2) \leq F_Z^{-1}(q) + \epsilon. $$
	We have thus proved that for all $\epsilon > 0$, there exists $\delta$ such that: 
	$$(|\beta - \alpha| \leq \delta \text{ and } R > \delta^{-1}) \implies |F_R^{-1}(q; \beta) - F_{Z}^{-1}(q)| \leq \epsilon.$$
	The contraposition of the previous implication gives the convergence in probability of $F_R^{-1}(q; \widehat{\alpha}_+)$ toward $F_{Z}^{-1}(q)$. Indeed, for all $\epsilon> 0$, according to Corollary \ref{cor_ic_mult_scala_est_taper}:
	$$\underset{R \to \infty}{\operatorname{\lim\sup}}~ \mathbb{P}(|F_R^{-1}(q; \widehat{\alpha}_+) - F_{Z}^{-1}(q)| > \epsilon) \leq \underset{R \to \infty}{\operatorname{\lim\sup}}~ (\mathbb{P}(|\widehat{\alpha} - \alpha| > \delta) + \mathbf{1}\{R < \delta^{-1}\}) = 0.$$
	Finally, we leverage again  the fact that the probability $\mathbb{P}(|\Phi_R| = 0)$ converges to 0 (as shown in the proof of Theorem~\ref{thm_cv_alpha}), along with Corollary \ref{cor_ic_mult_scala_est_taper} and Slutsky's lemma, to obtain:
	$$\lim_{R \to \infty} p_R = 0 + \mathbb{P}(F_{Z}^{-1}(a/2) \leq Z \leq F_{Z}^{-1}(1 -a/2)) = 1 - a.$$ 
\end{proof}

\subsection{Variance and bias with Hermite tapers}
\label{app_mult_taper}

The main objective of this section is to prove Proposition \ref{prop_biais_var}, wherein we utilize Hermite tapers~\eqref{eq_herm} to establish non-asymptotic bounds on both the bias and variance of the estimator $\widehat{\alpha}(I, J, R)$.
The proof is based on Lemma~\ref{lemma_frac_norm_herm}, that  provides a control of  the fractional moments of the Hermite wavelets, on 
 Lemma~\ref{lemma_bound_sum_herm}, that investigates the $i$-dependence of the asymptotic variance of $T_j(\psi_i, R)$ (see Proposition \ref{prop_variance}), and on Lemma~\ref{lemma_remain_herm}, that quantifies the localization properties of the Hermite function by upper bounding their $L^2$ tail.

\begin{lemma}\label{lemma_frac_norm_herm}
	For all $\nu \geq 0$, there exists $0 < c_{\nu} \leq C_{\nu} < \infty$ such that, for all $|i| \neq 0$: 
	\begin{equation}\label{eq_frac_mom_herm_annexe}
		c_{\nu} |i|^{\nu/2} \leq \int_{\mathbb{R}^d} |\psi_i(k)|^2 |k|^{\nu} dk \leq C_{\nu} |i|^{\nu/2}.
	\end{equation}
\end{lemma}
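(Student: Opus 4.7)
The plan is to reduce the multi-dimensional integral to products of one-dimensional moments of the normalized Hermite functions $\phi_n$, exploiting that $|\psi_i(k)|^2 = \prod_{l=1}^d \phi_{i_l}(k_l)^2$ is a probability density on $\mathbb{R}^d$ with independent marginals. Letting $K=(K_1,\dots,K_d)$ have this joint density, the target integral is $\mathbb{E}[|K|^\nu]$, and the goal is to sandwich it between constants times $|i|^{\nu/2}$.

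First I would establish the one-dimensional bounds $c_\nu(n+1)^{\nu/2} \le \int \phi_n(y)^2 |y|^\nu\,dy \le C_\nu (n+1)^{\nu/2}$. The recurrence $y\phi_n = \sqrt{n/2}\,\phi_{n-1}+\sqrt{(n+1)/2}\,\phi_{n+1}$ gives $\int \phi_n^2 y^2\,dy = n+1/2$ and, by iteration, $\int \phi_n^2 y^{2k}\,dy \le C_k(n+1)^k$ for every integer $k\ge 1$. The upper bound then follows: for $\nu\in[0,2]$ apply Jensen to the concave map $x\mapsto x^{\nu/2}$ against the probability density $\phi_n^2$; for $\nu>2$ pick an integer $k\ge \nu/2$ and apply H\"older with exponents $(2k/\nu,\,2k/(2k-\nu))$ to the factorisation $\phi_n^2|y|^\nu = (\phi_n^2 y^{2k})^{\nu/(2k)}(\phi_n^2)^{1-\nu/(2k)}$.

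For the one-dimensional lower bound, the range $\nu\ge 2$ is again Jensen, now on the convex map $x\mapsto x^{\nu/2}$, yielding $\mathbb{E}[|Y|^\nu]\ge(n+1/2)^{\nu/2}$. The delicate range is $0<\nu<2$, which I expect to be the main obstacle since Jensen goes the wrong way. I would argue by a Paley--Zygmund type estimate: setting $T=\sqrt{n/2}$, the decomposition
\begin{equation*}
n+\tfrac12 = \mathbb{E}[Y^2\mathbf{1}_{|Y|<T}] + \mathbb{E}[Y^2\mathbf{1}_{|Y|\ge T}]
\end{equation*}
gives $\mathbb{E}[Y^2\mathbf{1}_{|Y|\ge T}] \ge n/2$, while Cauchy--Schwarz combined with the fourth-moment bound $\mathbb{E}[Y^4]\le C(n+1)^2$ from the first step yields $\mathbb{E}[Y^2\mathbf{1}_{|Y|\ge T}]\le C^{1/2}(n+1)\sqrt{\mathbb{P}(|Y|\ge T)}$. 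Together, $\mathbb{P}(|Y|\ge T)\ge c$ uniformly in $n\ge 1$, hence $\mathbb{E}[|Y|^\nu] \ge T^\nu\mathbb{P}(|Y|\ge T)\ge c'(n+1)^{\nu/2}$; the case $n=0$ is a positive explicit constant.

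Finally, to assemble the $d$-dimensional bounds I use the elementary comparisons $\max_l|k_l|^\nu \le |k|^\nu \le C_\nu^{(d)}\sum_l|k_l|^\nu$, valid for $\nu\in[0,2]$ by subadditivity of $x\mapsto x^{\nu/2}$ and for $\nu>2$ by Jensen on the uniform measure over coordinates. The upper bound follows from Fubini together with the one-dimensional upper bound and the inequality $\sum_l(i_l+1)^{\nu/2}\le C_d|i|^{\nu/2}$, which uses $|i|\ge 1$ (since $|i|\ne 0$ and $i\in\mathbb{N}^d$). For the lower bound, choose $l$ achieving $i_l=|i|_\infty$ and bound $\mathbb{E}[|K|^\nu]\ge\mathbb{E}[|K_l|^\nu]\ge c_\nu(|i|_\infty+1)^{\nu/2}\ge c'_\nu|i|^{\nu/2}$, where the last step uses $|i|_\infty\ge|i|/\sqrt{d}$.
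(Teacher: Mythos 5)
Your proof is correct, and while the upper bound and the reduction to dimension one follow essentially the same lines as the paper (the three-term recurrence $y\phi_n=\sqrt{n/2}\,\phi_{n-1}+\sqrt{(n+1)/2}\,\phi_{n+1}$ iterated to control even moments, then interpolation to non-integer $\nu$, then $\max_l|k_l|^\nu\le|k|^\nu\le C\sum_l|k_l|^\nu$ plus Fubini), your treatment of the lower bound in the delicate range $0<\nu<2$ is genuinely different. The paper substitutes $x=\sqrt{2i+1}\cos\phi$ and invokes the Plancherel--Rotach-type asymptotics of Szeg\H{o} (Theorem 8.22.9) to show that the oscillatory integral $J(i)$ stays bounded away from zero, which requires a reverse triangle inequality in a weighted $L^2$ space and an integration by parts to kill the oscillating term. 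You instead observe that $\phi_n^2$ is a probability density with exactly computable second moment $n+\tfrac12$ and fourth moment $O((n+1)^2)$, and run a Paley--Zygmund/second-moment argument to get $\mathbb{P}(|Y|\ge\sqrt{n/2})\ge c$ uniformly in $n\ge1$, whence $\mathbb{E}[|Y|^\nu]\ge(n/2)^{\nu/2}c$. Your route is entirely self-contained and elementary --- it needs nothing beyond the recurrence and Cauchy--Schwarz --- whereas the paper's route leans on a nontrivial classical asymptotic but in exchange identifies the actual limiting constant $\tfrac14\int_{\pi/4}^{3\pi/4}|\cos\phi|^\nu d\phi$ in the oscillatory regime; since the lemma only asks for two-sided bounds up to constants, your argument fully suffices and is arguably cleaner.
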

\begin{proof}
	We first focus on the upper bound. It suffices to prove it for $\nu = 2k$ where $k \in \mathbb{N}$. Indeed, if $\nu$ is not an even integer we write $\nu = t \times 2k_{\nu} + (1- t) (2k_{\nu} +2)$ with $k_{\nu} \in \mathbb{N}$ and $t \in (0, 1)$. Then, we deduce the general case from the case where $\nu$ is even, using  H\"older inequality:
	\begin{align*}
		\int_{\mathbb{R}^d} |\psi_i(k)|^2 |k|^{\nu} dk &\leq \left(\int_{\mathbb{R}^d} |\psi_i(k)|^2 |k|^{2k_{\nu}} dk  \right)^{t} \left(\int_{\mathbb{R}^d} |\psi_i(k)|^2 |k|^{ 2k_{\nu}+2} dk  \right)^{1 - t} \\& \leq C_{2k_{\nu}}^t C_{2k_{\nu}+2}^{1 - t} |i|^{\frac{t}2 \times 2k_{\nu} + \frac{(1- t)}2 (2k_{\nu} +2)} = C_{2k_{\nu}}^t C_{2k_{\nu}+2}^{1 - t} |i|^{\nu/2}.
	\end{align*}
	We also note that it suffices to prove the inequality when $d = 1$. Indeed, for $x\in\mathbb R^d$ and $i = (i_1, \dots, i_d) \in \mathbb{N}^d$,   
$\psi_i(x_1, \dots, x_d) = \prod_{l = 1}^d \psi_{i_l}^1(x_l)$ where $\psi_{i_l}^1(x_l)=e^{-x_l^2/2} H_{i_l}(x_l)$. Accordingly, using the fact that $\|\psi_{i_l}^1\|_2 = 1$, we deduce the $d \geq 2$ case from the case $d =1$:
	\begin{align*}
		\int_{\mathbb{R}^d} |\psi_i(k)|^2 |k|^{\nu} dk & \leq d^{\nu/2} \int_{\mathbb{R}^d} |\psi_i(k)|^2 (|k_1|^{\nu} + \dots + |k_d|^{\nu}) dk \\& \leq d^{\nu/2} \sum_{l = 1}^d \int_{\mathbb{R}} |\psi_{i_l}^1(k_l)|^2 |k_l|^{\nu} dk_l \prod_{l' = 1, \dots, d}^{l' \neq l} \int_{\mathbb{R}} |\psi_{i_{l'}}^1(k_{l'})|^2 dk_{l'} \\& \leq d^{\nu/2} \sum_{l = 1}^d C_{\nu}  |i_l|^{\nu/2} \leq d^{\frac{2\nu+\nu}4} C_{\nu} |i|^{\nu/2}.
	\end{align*}
	Consequently, we suppose that $d = 1$ and $\nu = 2k$. We want to prove that for all $k \geq 0$ and $i \neq 0$, then $\int_{\mathbb{R}} |\psi_i(x)|^2 |x|^{2 k} dx \leq C_{2k} i^{k}$. Note that it is sufficient to prove it for $i \geq k$, the remaining cases being finite in number, so uniformly bounded.  Assuming $i\geq k$, we use the following inequality, that comes from a standard recursive relation for the Hermite polynomials, see, e.g., \cite{szeg1939orthogonal}, 
	\begin{align*}
		\left|x \psi_{i}(x)\right|^2 &= \left|\sqrt{\frac{i}2} \psi_{i-1}(x) + \sqrt{\frac{i+1}2} \psi_{i+1}(x)\right|^2 \leq 2 \frac{i}2 |\psi_{i-1}(x)|^2 + 2\frac{i+1}2 |\psi_{i+1}(x)|^2.
	\end{align*}
Hence 	
\begin{align*}
		\int_{\mathbb{R}} |\psi_i(x)|^2 |x|^{2 k} dx   \leq  i \int_{\mathbb{R}} |\psi_{i-1}(x) x^{k-1} |^2 dx  +  (i+1)\int_{\mathbb{R}} |\psi_{i+1}(x) x^{k-1} |^2 dx.
\end{align*}
Since $i\geq k$, we can iterate this inequality, using in the last step $\|\psi_i\|_2= 1$, to obtain
$$\int_{\mathbb{R}} |\psi_i(x)|^2 |x|^{2 k} dx   \leq P_k(i),$$
where  $P_k$ is a polynomial of degree $k$. This yields the upper bound $\int_{\mathbb{R}} |\psi_i(x)|^2 |x|^{2 k} dx \leq C_{2k} i^{k}$, where $C_{2k}$ depends on both $P_k$ and  the uniform bound for the $k-1$ terms associated to $1\leq i\leq k-1$.

	We now prove the lower bound. Arguing as for the upper bound, the general case is deduced from the $d = 1$ case. Consequently, we consider $d = 1$. With the change of variables $x = \sqrt{2i+1} \cos(\phi)$, we have:
	\begin{align*}
		\int_{\mathbb{R}} |\psi_i(x)|^2 |x|^{\nu} dx &\geq \int_{-\frac{\sqrt{2}}2 \sqrt{2i+1}}^{\frac{\sqrt{2}}2 \sqrt{2i+1}} |\psi_i(x)|^2 |x|^{\nu} dx \\& = \int_{\frac{\pi}4}^{\frac{3\pi}4} |\psi_i(\sqrt{2i+1} \cos(\phi))|^2 |\cos(\phi)|^{\nu} (2i+1)^{\nu/2} \sin(\phi) \sqrt{2i+1} d\phi.
	\end{align*}
	Then, we use the following bound (Theorem 8.22.9 of~\cite{szeg1939orthogonal}), valid for all $\phi \in \left[\frac{\pi}4, \frac{3\pi}4\right]$:
	$$\left|\psi_i(\sqrt{2i+1} \cos(\phi)) - \frac{(-1)^i 2^{1/4}}{\pi^{1/2} i^{1/4}} \frac{\sin\left(\frac{3\pi}{4} + (\frac{i}2+\frac{1}4)(\sin(2\phi) - 2 \phi)\right)}{\sin(\phi)^{1/2}}\right| \leq \frac{C}{i+1},$$
	where $C> 0$ does not depend on $i$ and $\phi$, and may change in the following from line to line. Using the reverse triangular inequality for the $L^2(\pi/4, 3\pi/4)$ Lebesgue space with weight $\phi \mapsto |\cos(\phi)|^{\nu} \sin(\phi)$, we get:
	\begin{align*}
		\left(\int_{\mathbb{R}} |\psi_i(x)|^2 |x|^{\nu} dx\right)^{1/2} &\geq C'\left(i^{-1/2}(2i+1)^{\frac{\nu+1}2} J(i)\right)^{1/2} \\& \hspace{2cm} - \left(\int_{\pi/4}^{3\pi/4}  C^2 \frac{(2i+1)^{\frac{\nu+1}2}}{(i+1)^2} |\cos(\phi)|^{\nu} \sin(\phi) d\phi\right)^{1/2} \\&  \geq C i^{\nu/4}\left(J(i)^{1/2} - i^{-3/4}\right),
	\end{align*}
	where $C'>0$ is another generic constant and  $$J(i) = \int_{\pi/4}^{3\pi/4} \sin^2\left(\frac{3\pi}{4} + (\frac{i}2+\frac{1}4)(\sin(2\phi) - 2 \phi)\right) |\cos(\phi)|^{\nu} d\phi.$$ 
	We can write the latter  integral as
	$$J(i) = \frac12 \int_{\pi/4}^{3\pi/4} |\cos(\phi)|^{\nu} d\phi - \frac12 \int_{\pi/4}^{3\pi/4} \cos\left(3\pi+ 2(i+\frac{1}2)(\sin(2\phi) - 2 \phi)\right) |\cos(\phi)|^{\nu} d\phi.$$
	The change of variable $\theta = \sin(2 \phi) - 2\phi$ and an integration by part ensures that the second term above goes to $0$ as $i \to \infty$. Therefore, there exists $i_0=i_0(\nu) \geq 1$ such that for all $i \geq i_0$:
	\begin{align*}
		\left(\int_{\mathbb{R}} |\psi_i(x)|^2 |x|^{\nu} dx\right)^{1/2} &  \geq C i^{\nu/4}\left( \left(\frac14\int_{\pi/4}^{3\pi/4} |\cos(\phi)|^{\nu} d\phi\right)^{1/2} - i^{-3/4}\right)  \geq  C' i^{\nu/4}.
	\end{align*}
Up to the modification of the constant, we may gather in the same inequality the remaining terms associated to $i<i_0$, to obtain for all $i \geq 1$ and for some $c_\nu>0$:
	$$\int_{\mathbb{R}} |\psi_i(x)|^2 |x|^{\nu} dx \geq~c_{\nu} i^{\nu/2}.$$
\end{proof}

\begin{lemma}\label{lemma_bound_sum_herm}
	Let $\alpha \geq 0$ and $(\psi_i)_{i \in I}$ be a family of Hermite wavelets given by~\eqref{eq_herm}. Then, there exist two constants $0 < c \leq C < \infty$ such that: 
	\begin{equation}\label{eq_sum_mom_herm}
		c \sum_{i \in I} |i|^{\alpha/2} \leq \sum_{i \in I}   \int_{\mathbb{R}^d} |\mathcal{F}[\psi_i](k)|^2 |k|^{\alpha} dk \leq C \sum_{i \in I} |i|^{\alpha/2},\end{equation}
	\begin{equation}\label{eq_sum_mom_herm_2}
		\sum_{i_1 , i_2 \in I} \left(\int_{\mathbb{R}^d} \mathcal{F}[\psi_{i_1}](k) \overline{\mathcal{F}[\psi_{i_2}]}(k) |k|^{\alpha} dk\right)^2 \leq  C\sum_{i \in I} |i|^{\alpha}.
	\end{equation}
\end{lemma}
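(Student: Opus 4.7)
The plan is to exploit two standard properties of the Hermite basis, combined with the fractional moment estimates of the preceding Lemma~\ref{lemma_frac_norm_herm}. The first property is that the $\psi_i$ are eigenvectors of the Fourier transform with unit modulus eigenvalues: $\mathcal{F}[\psi_i] = (-\bm{i})^{i_1+\cdots+i_d}\psi_i$, so that $|\mathcal{F}[\psi_i](k)|^2 = |\psi_i(k)|^2$ pointwise. The second is that $(\psi_i)_{i \in \mathbb{N}^d}$ is an orthonormal basis of $L^2(\mathbb{R}^d)$, which makes Bessel's inequality available on any finite subfamily $I$.

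For the two-sided bound \eqref{eq_sum_mom_herm}, the first property turns $\int_{\mathbb{R}^d}|\mathcal{F}[\psi_i](k)|^2|k|^\alpha dk$ into $\int_{\mathbb{R}^d}|\psi_i(k)|^2|k|^\alpha dk$, and Lemma~\ref{lemma_frac_norm_herm} applied with $\nu=\alpha$ gives the termwise inequalities $c_\alpha |i|^{\alpha/2} \leq \int|\psi_i|^2|k|^\alpha dk \leq C_\alpha |i|^{\alpha/2}$ whenever $|i|\neq 0$. Summing over $i\in I$ yields \eqref{eq_sum_mom_herm}; the possible index $i=0$, if present, contributes a bounded constant that can be absorbed into $c$ and $C$ since only finitely many such indices occur.

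For \eqref{eq_sum_mom_herm_2}, the Fourier eigenvector property gives
$$\int_{\mathbb{R}^d}\mathcal{F}[\psi_{i_1}](k)\overline{\mathcal{F}[\psi_{i_2}]}(k)|k|^\alpha dk = (-\bm{i})^{|i_1|_1}(\bm{i})^{|i_2|_1}\int_{\mathbb{R}^d}\psi_{i_1}(k)\psi_{i_2}(k)|k|^\alpha dk,$$
so squaring amounts to the real quantity $\left(\int \psi_{i_1}\psi_{i_2}|k|^\alpha dk\right)^2$. The key step is to view this as $\langle \psi_{i_2}, \psi_{i_1}|\cdot|^\alpha\rangle^2$, fix $i_1$, and apply Bessel's inequality along $i_2$ to the full orthonormal basis (extending from $I$ to $\mathbb{N}^d$ only enlarges the sum):
$$\sum_{i_2 \in I}\big\langle \psi_{i_2},\psi_{i_1}|\cdot|^\alpha\big\rangle^2 \leq \big\|\psi_{i_1}|\cdot|^\alpha\big\|_2^2 = \int_{\mathbb{R}^d}\psi_{i_1}(k)^2|k|^{2\alpha} dk.$$
The membership $\psi_{i_1}|\cdot|^\alpha \in L^2$ is guaranteed by Lemma~\ref{lemma_frac_norm_herm} with $\nu=2\alpha$, which simultaneously bounds the right-hand side by $C_{2\alpha}|i_1|^\alpha$. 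Summing over $i_1\in I$ finishes the argument with $C=C_{2\alpha}$.

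There is no serious obstacle here: the only subtlety is recognizing that the correct exponent for Lemma~\ref{lemma_frac_norm_herm} in the second bound is $\nu=2\alpha$ (not $\alpha$), which is exactly what produces the target right-hand side $\sum_{i}|i|^\alpha$ rather than the weaker $(\sum_i |i|^{\alpha/2})^2$ that a naive Cauchy--Schwarz on each term would yield.
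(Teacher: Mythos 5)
Your proof is correct and follows essentially the same route as the paper: part \eqref{eq_sum_mom_herm} by summing the termwise bounds of Lemma~\ref{lemma_frac_norm_herm} with $\nu=\alpha$ (absorbing a possible $i=0$ term), and part \eqref{eq_sum_mom_herm_2} by enlarging the $i_2$-sum to the full Hermite basis, applying Bessel/Parseval to identify it with $\|\psi_{i_1}(\cdot)|\cdot|^{\alpha}\|_2^2$, and invoking Lemma~\ref{lemma_frac_norm_herm} with $\nu=2\alpha$. The only cosmetic difference is that you factor out the Fourier phases first while the paper works directly with the basis $(\mathcal{F}[\psi_{i_2}])_{i_2}$ and checks realness via $k\leftrightarrow -k$; these are equivalent.
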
  

\begin{proof}
	If $I$ does not contain $0$, then the inequalities in \eqref{eq_sum_mom_herm} are obtained by summing over $i \in I \setminus\{0\}$ the inequalities~\eqref{eq_frac_mom_herm_annexe} of Lemma~\ref{lemma_frac_norm_herm}, since for all $k$, $|\mathcal{F}[\psi_i](k)|=|\psi_i(k)|$. If $I$ contains $0$, we start from these inequalities and we simply add the term $i = 0$: 
	\begin{align*}
		c_0 + c \sum_{i \in I\setminus\{0\}} |i|^{\alpha/2} \leq \sum_{i \in I}   \int_{\mathbb{R}^d} |\mathcal{F}[\psi_i](k)|^2 |k|^{\alpha} dk \leq C \sum_{i \in I\setminus\{0\}} |i|^{\alpha/2} + c_0,
	\end{align*}
	where $c_0 = \int_{\mathbb{R}^d} |\mathcal{F}[\psi_0](k)|^2 |k|^{\alpha} dk$. The latter upper-bound is less than $(C + c_0) \sum_{i \in I} |i|^{\alpha/2}$ since  $1\leq \sum_{i \in I\setminus\{0\}} |i|^{\alpha/2}$, yielding the upper-bound of \eqref{eq_sum_mom_herm}. The lower-bound in \eqref{eq_sum_mom_herm} is in turn obviously deduced.

	Concerning inequality \eqref{eq_sum_mom_herm_2}, using the change of variable $k \leftrightarrow -k$, we note that $$\int_{\mathbb{R}^d} \mathcal{F}[\psi_{i_1}](k) \overline{\mathcal{F}[\psi_{i_2}]}(k) |k|^{\alpha} dk \in \mathbb{R},$$ so the statement makes sense, and we have:
	\begin{align*}
		\sum_{i_1 , i_2 \in I} \left(\int_{\mathbb{R}^d} \mathcal{F}[\psi_{i_1}](k) \overline{\mathcal{F}[\psi_{i_2}]}(k) |k|^{\alpha} dk\right)^2 &\leq \sum_{i_1\in I} \sum_{i_2 \in \mathbb{N}^d} \left \langle \mathcal{F}[\psi_{i_1}](\cdot) | \cdot |^{\alpha}, \mathcal{F}[\psi_{i_2}](\cdot) \right \rangle^2. 
	\end{align*}
We view the latter angle brackets as the coefficients of the function $\mathcal{F}[\psi_{i_1}](\cdot) | \cdot |^{\alpha}$	in the orthonormal basis $(\mathcal{F}[\psi_{i_2}])_{i_2 \in \mathbb N^d}$, so that the sum over $i_2$ above is nothing else than $\|\mathcal{F}[\psi_{i_1}](\cdot) | \cdot |^{\alpha}\|_2^2$. 
Thereby,
	 	\begin{align*}
	 	\sum_{i_1 , i_2 \in I} \left(\int_{\mathbb{R}^d} \mathcal{F}[\psi_{i_1}](k) \overline{\mathcal{F}[\psi_{i_2}]}(k) |k|^{\alpha} dk\right)^2 &\leq  \sum_{i_1\in I} \int_{\mathbb{R}^d} |\mathcal{F}[\psi_{i_1}](k)|^2 |k|^{2\alpha}dk.
	 \end{align*}
	The conclusion  follows from the fact that for all $k$, $|\mathcal{F}[\psi_{i_1}](k)|=|\psi_{i_1}(k)|$ and Lemma~\ref{lemma_frac_norm_herm}. 
	\end{proof}

\begin{lemma}\label{lemma_remain_herm}
	Let $r > 0$ and $i \in \mathbb{N}^d$, then:
	$$\int_{\mathbb{R}^d \setminus [-r, r]^d} |\psi_i(x)|^2 dx \leq \frac{2d}{\pi^{1/4}} e^{|i|_{\infty}} \int_{y \geq r - \sqrt{2 |i|_{\infty}}} e^{-y^2/2} dy.$$
\end{lemma}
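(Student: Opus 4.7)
The plan is to reduce the $d$-dimensional estimate to a one-dimensional tail bound on Hermite functions, and then to establish the latter via Cauchy's integral formula applied to the Hermite generating function.

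First, I exploit the tensor product structure $\psi_i(x) = \prod_{l=1}^d \psi_{i_l}^1(x_l)$, where $\psi_n^1(y) := e^{-y^2/2} H_n(y)$ denotes the one-dimensional Hermite function with $\|\psi_n^1\|_{L^2(\mathbb R)} = 1$. Combining the inclusion $\mathbb{R}^d \setminus [-r,r]^d \subset \bigcup_{l=1}^d \{|x_l| \geq r\}$, the union bound, Fubini, and the evenness of $|\psi_n^1|^2$ yields
\[
\int_{\mathbb{R}^d \setminus [-r,r]^d} |\psi_i(x)|^2 \, dx \;\leq\; 2 \sum_{l=1}^d \int_r^\infty |\psi_{i_l}^1(y)|^2 \, dy.
\]

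The crux is the one-dimensional bound: for every $n \in \mathbb{N}$ and $r>0$,
\[
\int_r^\infty |\psi_n^1(y)|^2 \, dy \;\leq\; \frac{e^n}{\pi^{1/4}} \int_{r-\sqrt{2n}}^\infty e^{-y^2/2}\, dy.
\]
The case $n=0$ follows directly from $\psi_0^1(y)=\pi^{-1/4} e^{-y^2/2}$ together with $e^{-y^2} \leq e^{-y^2/2}$ and $\pi^{-1/2} \leq \pi^{-1/4}$. For $n \geq 1$, I apply Cauchy's formula on the circle $|t|=s>0$ to the generating function
\[
G(y,t) := \sum_{n \geq 0} \psi_n^1(y) \frac{t^n}{\sqrt{n!}} \;=\; \pi^{-1/4} \exp\!\left(\tfrac{t^2}{2} - \tfrac{(y-t\sqrt 2)^2}{2}\right),
\]
obtained from the classical generating function of the physicists' Hermite polynomials. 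A direct computation shows that the real part of the exponent at $t=se^{i\theta}$ equals $s^2/2 - s^2(\cos\theta - y/(s\sqrt{2}))^2$, so its maximum over $\theta$ is $s^2/2$ when $|y|\leq s\sqrt{2}$ and $s^2/2 - (y-s\sqrt{2})^2/2$ when $y>s\sqrt{2}$. Specializing to $s=\sqrt{n}$ and using the elementary inequality $n!/n^n \leq 1$ gives the pointwise bound
\[
|\psi_n^1(y)|^2 \;\leq\; \pi^{-1/2}\, e^{\,n - [(y-\sqrt{2n})_+]^2}, \qquad y > 0.
\]

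For $r \geq \sqrt{2n}$, integrating this pointwise bound and using $e^{-u^2} \leq e^{-u^2/2}$ together with $\pi^{-1/2} \leq \pi^{-1/4}$ delivers the target inequality. For $0 < r < \sqrt{2n}$, I use instead the trivial bound $\int_r^\infty |\psi_n^1|^2 \leq 1/2$ (by symmetry and $L^2$-normalization) and note that the right-hand side of the target inequality is at least $(e^n/\pi^{1/4})\sqrt{\pi/2} = e^n \pi^{1/4}/\sqrt{2}$, which exceeds $1/2$ for every $n \geq 1$. Finally, since the right-hand side of the 1D bound is nondecreasing in $n$ (both $e^n$ and the integral grow with $n$), I replace each index $i_l$ by $|i|_\infty$ in the sum of the first step, which produces the factor $d$ in front and yields the claimed $d$-dimensional inequality. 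The main obstacle will be the careful maximization over $\theta$ of the real part of the exponent in Cauchy's formula: once this calculation has been carried out and the Gaussian factor $e^{-(y-\sqrt{2n})^2}$ extracted with the correct constant, the remaining bookkeeping ($n!/n^n \leq 1$, $e^{-u^2} \leq e^{-u^2/2}$, and $\pi^{-1/2} \leq \pi^{-1/4}$) is routine.
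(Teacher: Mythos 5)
Your proposal is correct, and I verified the key computations: the generating function $\sum_n \psi_n^1(y)t^n/\sqrt{n!} = \pi^{-1/4}\exp\bigl(t^2/2-(y-t\sqrt2)^2/2\bigr)$ is right for the normalization $H_n=(2^nn!\sqrt\pi)^{-1/2}\tilde H_n$ used in the paper, the real part of the exponent on $|t|=s$ does equal $s^2/2-s^2(\cos\theta-y/(s\sqrt2))^2$, and the choice $s=\sqrt n$ together with $n!/n^n\le1$ yields the pointwise bound $|\psi_n^1(y)|^2\le\pi^{-1/2}e^{\,n-[(y-\sqrt{2n})_+]^2}$, from which the stated constants follow. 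The outer architecture is the same as the paper's: a coordinate-wise union bound, factorization of the tensor product with the $L^2$-normalization of the untouched one-dimensional factors, and the final replacement of each $i_l$ by $|i|_\infty$ using monotonicity. Where you genuinely diverge is in the one-dimensional ingredient: the paper simply cites the pointwise inequality $|H_n(y)|\le\pi^{-1/4}e^{\sqrt{2n}|y|}$ from van Eijndhoven and Meyers and completes the square $e^{-y^2/2+\sqrt{2n}|y|}=e^{n}e^{-(|y|-\sqrt{2n})^2/2}$, which applies uniformly in $y$ and needs no case distinction, whereas you rederive an essentially equivalent (in fact pointwise sharper, since it is flat up to the turning point $\sqrt{2n}$ and Gaussian beyond it) bound from scratch via Cauchy's estimate on the generating function, at the cost of splitting off the regime $r<\sqrt{2n}$, which you correctly dispatch with the trivial bound $1/2$ against a right-hand side that is at least $e^n\pi^{1/4}/\sqrt2$. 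Your route is self-contained where the paper's leans on a citation; both arrive at the same final inequality with the same constants.
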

\begin{proof}
	With Equation (1.2) of~\cite{van1990new},
	$\forall y \in \mathbb{N},~|H_{n}(y)| \leq \pi^{-1/4} \exp(\sqrt{2n} |y|),$ we obtain that for $i=(i_1,\dots,i_d) \in \mathbb{N}^d$,  $x=(x_1,\dots,x_d) \in \mathbb{R}^d$ and $1 \leq l_0 \leq d$:
	\begin{align*}
		|\psi_i(x)| & = e^{-\frac12|x|^2} |H_{i_{l_0}}(x_{l_0})| \prod_{l = 1, l \neq l_0}^d |H_{i_l}(x_l)| \\& \leq  e^{-\frac12 x_{l_0}^2}  \pi^{-1/4} \exp(\sqrt{2 i_{l_0}} |x_{l_0}|) \prod_{l = 1, l \neq l_0}^d |H_{i_l}(x_l)|e^{-\frac12 x_l^2}.
	\end{align*}
	Using the fact that $H_{i_l}(x_l) e^{-\frac12 x_l^2}$ has a $L^2(\mathbb{R})$ norm equal to $1$ and the inequality:
	$$1 - \prod_{i = 1}^d \mathbf{1}_{|x_l| \leq r} \leq \sum_{l = 1}^d \mathbf{1}_{|x_l| > r},$$
	we get:
	\begin{align*}
		\int_{\mathbb{R}^d \setminus [-r, r]^d} |\psi_i(x)|^2 dx &\leq \sum_{l_0 = 1}^d \int_{\mathbb{R}^d} \mathbf{1}_{|x_{l_0}| \geq r}|\psi_i(x)|^2 dx  \leq \frac{1}{\pi^{1/4}} \sum_{l = 1}^d \int_{|y| \geq r} e^{-\frac12 y^2}  e^{\sqrt{2 i_l} |y|} dy.
	\end{align*}
	To conclude, we use the changes of variable $y \leftrightarrow y- \sqrt{2i_l}$:
	\begin{align*}
		\int_{\mathbb{R}^d \setminus [-r, r]^d} |\psi_i(x)|^2 dx & \leq 2\pi^{-1/4} \sum_{l = 1}^d e^{i_l} \int_{y \geq r} \exp\left(-\frac12 \left(y  - \sqrt{2 i_l}\right)^2\right) dy \\
		& = 2\pi^{-1/4} \sum_{l = 1}^d e^{i_l} \int_{y \geq r - \sqrt{2 i_l}} e^{-y^2/2} dy \\
		& \leq \frac{2d}{\pi^{1/4}}  e^{|i|_{\infty}}\int_{y \geq r - \sqrt{2 |i|_{\infty}}} e^{-y^2/2} dy.
	\end{align*}
\end{proof}
We now turn to the proof of Proposition \ref{prop_biais_var}

\begin{proof}[Proof of Propositon \ref{prop_biais_var}.]
	Let $\epsilon > 0$. We want to upper bound:
	$$p_R := \mathbb{P}\left(\mathbf{1}_{|\Phi_R| \geq 1}\log(R)\left|\widehat{\alpha}(I, J, R) - \alpha\right| \geq \epsilon\right).$$
	According to Equations \eqref{eq_sum_wj_j_0} and \eqref{eq_sum_wj_j_1}, we can normalize the sum inside the logarithm with a quantity close to expectation of the numerator:
	\begin{align*}
		\mathbf{1}_{|\Phi_R| \geq 1}\log(R)\left(\widehat{\alpha}(I, J, R) - \alpha\right) & = \mathbf{1}_{|\Phi_R| \geq 1} \sum_{j \in J} w_j \log\left(\sum_{i \in I} R^{(\alpha - d)j}T_j(\psi_i, R)^2\right) \\& = \mathbf{1}_{|\Phi_R| \geq 1} \sum_{j \in J} w_j \log\left(\frac{\sum_{i \in I} R^{(\alpha - d)j}T_j(\psi_i, R)^2}{\sum_{i \in I} \int_{\mathbb{R}^d} |\mathcal{F}[\psi_i](k)|^2 t|k|^{\alpha} dk}\right).
	\end{align*}
	We deduce:
	\begin{align*}
		p_R &\leq \mathbb{P}\left(\mathbf{1}_{|\Phi_R| \geq 1} \sum_{j \in J} |w_j| \left|\log\left(\frac{\sum_{i \in I} R^{(\alpha - d)j}T_j(\psi_i, R)^2}{\sum_{i \in I} \int_{\mathbb{R}^d} |\mathcal{F}[\psi_i](k)|^2 t|k|^{\alpha} dk}\right)\right| \geq \epsilon\right) \\& \leq \mathbb{P}\left(\mathbf{1}_{|\Phi_R| \geq 1} |w|_{\infty} \sum_{j \in J} \left|\log\left(\frac{\sum_{i \in I} R^{(\alpha - d)j}T_j(\psi_i, R)^2}{\sum_{i \in I} \int_{\mathbb{R}^d} |\mathcal{F}[\psi_i](k)|^2 t|k|^{\alpha} dk}\right)\right| \geq \epsilon\right) \\& \leq \sum_{j \in J} \mathbb{P}\left(\mathbf{1}_{|\Phi_R| \geq 1} \left|\log\left(\frac{\sum_{i \in I} R^{(\alpha - d)j}T_j(\psi_i, R)^2}{\sum_{i \in I} \int_{\mathbb{R}^d} |\mathcal{F}[\psi_i](k)|^2 t|k|^{\alpha} dk}\right)\right| \geq \frac{\epsilon}{|J| |w|_{\infty}}\right).
	\end{align*}
Denoting $\epsilon'=(\epsilon/(|J| |w|_{\infty})) \wedge 1$, we also have
	$$p_R \leq \sum_{j \in J} \mathbb{P}\left(\mathbf{1}_{|\Phi_R| \geq 1} \left|\log\left(\frac{\sum_{i \in I} R^{(\alpha - d)j}T_j(\psi_i, R)^2}{\sum_{i \in I} \int_{\mathbb{R}^d} |\mathcal{F}[\psi_i](k)|^2 t|k|^{\alpha} dk}\right)\right| \geq \epsilon'\right).$$
	Since $\epsilon' > 0$, we can rewrite the previous bound as:
	$$p_R \leq \sum_{j \in J} \mathbb{P}\left[\left|\log\left(\frac{\sum_{i \in I} R^{(\alpha - d)j} T_j(\psi, R)^2}{\sum_{i \in I} \int_{\mathbb{R}^d} |\mathcal{F}[\psi_i](k)|^2 t|k|^{\alpha} dk}\right)\right| \geq \epsilon' , |\Phi_R| \geq 1\right].$$
According to the mean value inequality, for all $y \in \mathbb{R}$ such that $|y-1| \leq \delta$, for some $\delta\leq 1/2$, then $|\log(y)| \leq 2\delta$. The contraposition of this result for $\delta=\epsilon'/2$ implies:
	\begin{align*}
		p_R & \leq \sum_{j \in J} \mathbb{P}\left[\left|\frac{\sum_{i \in I} R^{(\alpha - d)j} T_j(\psi, R)^2}{\sum_{i \in I} \int_{\mathbb{R}^d} |\mathcal{F}[\psi_i](k)|^2 t|k|^{\alpha} dk} - 1\right| \geq \frac{\epsilon'}{2}, |\Phi_R| \geq 1\right] \\
		& \leq \sum_{j \in J} \mathbb{P}\left[\left|\frac{\sum_{i \in I} R^{(\alpha - d)j} T_j(\psi, R)^2}{\sum_{i \in I} \int_{\mathbb{R}^d} |\mathcal{F}[\psi_i](k)|^2 t|k|^{\alpha} dk} - 1\right| \geq \frac{\epsilon'}{2}\right].
	\end{align*}
	Using Markov inequality, we obtain:
	\begin{equation}\label{eq_prop_bias_var_upp_pr}
		p_R \leq \frac{4}{\epsilon'^2}\sum_{j \in J} \mathbb{E}\left[\left(\frac{\sum_{i \in I} R^{(\alpha - d)j} T_j(\psi, R)^2}{\sum_{i \in I} \int_{\mathbb{R}^d} |\mathcal{F}[\psi_i](k)|^2 t|k|^{\alpha} dk} - 1\right)^2\right]= \frac{4}{\epsilon'^2}\sum_{j \in J} \mathcal{R}(I, R),
	\end{equation}
where:
	$$\mathcal{R}(I, R):= \mathbb{E}\left[\left(\frac{\sum_{i \in I} R^{(\alpha - d)j} T_j(\psi, R)^2}{\sum_{i \in I} \int_{\mathbb{R}^d} |\mathcal{F}[\psi_i](k)|^2 t|k|^{\alpha} dk} - 1\right)^2\right].$$
To bound $\mathcal{R}(I, R)$, we split it between a bias term and a variance term:
	\begin{align}\label{split R}
		\mathcal{R}(I, R) &= \frac1{\left(\sum_{i \in I} \int_{\mathbb{R}^d} |\mathcal{F}[\psi_i](k)|^2 t|k|^{\alpha} dk\right)^2} \left(B(I,R)^2 + V(I, R)\right), 
	\end{align}
	with
	$$B(I,R):= \sum_{i \in I} \mathbb{E}\left[R^{(\alpha - d)j} T_j(\psi_i, R)^2 - \int_{\mathbb{R}^d} |\mathcal{F}[\psi_i](k)|^2 t|k|^{\alpha} dk\right],$$
	$$V(i, R):= \operatorname{Var}\left[R^{(\alpha - d)j} \sum_{i \in I} |T_j(\psi_i, R)|^2\right].$$
	We first bound the bias term. We denote as in the proof of Lemma~\ref{lemma_cov}, $\psi_i^{R^{1-j}} := \psi_i \mathbf{1}_{[-R^{1-j}, R^{1-j}]^d}$. By \eqref{e.prop_campbell}, and using the computations leading to Equation \eqref{cov1}, we have
	\begin{align}\label{eq_var_psi_trunc}
	\mathbb{E}\left(R^{(\alpha - d)j} T_j(\psi_i, R)^2\right)&= \operatorname{Var}\left(R^{\frac{\alpha - d}{2} j}T_j(\psi_i, R)^2\right) \nonumber \\&= \int_{\mathbb{R}^d} \left|\mathcal{F}[\psi_i^{R^{1-j}}](k)\right|^2 R^{\alpha j}S\left(k/R^j\right) dk\nonumber \\& = \int_{\mathbb{R}^d} \left|\mathcal{F}[\psi_i](k)\right|^2 R^{\alpha j}S\left(k/R^j\right) dk + \delta_1(i, R) + \delta_2(i, R),
	\end{align}
	where 
	\begin{align*}
		\delta_1(i, R) &= \int_{\mathbb{R}^d} \left(\mathcal{F}[\psi_i^{R^{1-j}}](k) - \mathcal{F}[\psi_i](k)\right) \overline{\mathcal{F}[\psi_i^{R^{1-j}}](k)}  R^{\alpha j}S\left(k/R^j\right) dk,  \\
		\delta_2(i, R) &= \int_{\mathbb{R}^d} \left(\overline{\mathcal{F}[\psi_i^{R^{1-j}}]}(k) - \overline{\mathcal{F}[\psi_i]}(k)\right) \overline{\mathcal{F}[\psi_i](k)}  R^{\alpha j}S\left(k/R^j\right) dk.
	\end{align*}
	Using, as in \eqref{eq_delta_1_bound}, the Cauchy-Schwarz inequality and Plancherel Theorem, we obtain:
	\begin{align}\label{eq_remain_psi_trunc}
		|\delta_1(i, R)|+|\delta_2(i, R)| &\leq 2 \|S\|_{\infty} R^{\alpha j} \|\psi_i \mathbf{1}_{\mathbb{R^d} \setminus [-R^{1-j}, R^{1-j}]^d}\|_2 \|\psi_i\|_2 \nonumber \\
		& =2 \|S\|_{\infty} R^{\alpha j} \|\psi_i \mathbf{1}_{\mathbb{R^d} \setminus [-R^{1-j}, R^{1-j}]^d}\|_2.
	\end{align}
	Accordingly, gathering \eqref{eq_var_psi_trunc} and \eqref{eq_remain_psi_trunc}, the bound on the bias term is decomposed as
	\begin{align*}
	B(I,R) 
	&\leq B_1(I, R) + 2 \|S\|_{\infty} B_2(I, R),
	\end{align*}
	with 
	\begin{align*}
		B_1(I, R) &:= \sum_{i \in I} \int_{\mathbb{R}^d} |\mathcal{F}[\psi_i](k)|^2 R^{\alpha j} \left|S(k/R^j) -t|k/R^j|^{\alpha}\right| dk, \\& 
		B_2(I, R) := \sum_{i \in I} R^{\alpha j} \|\psi_i \mathbf{1}_{\mathbb{R^d} \setminus [-R^{1-j}, R^{1-j}]^d}\|_2.
	\end{align*}
	Concerning the first term, using the assumption that $|S(k) - t|k|^{\alpha}| \leq C_S |k|^{\beta}$ with $\beta > \alpha$ and $C_S> 0$, and then Lemma~\ref{lemma_bound_sum_herm}, we get, for some $C>0$, 
	\begin{align*}
	B_1(I,R)\leq \sum_{i \in I} \int_{\mathbb{R}^d} |\mathcal{F}[\psi_i](k)|^2 R^{(\alpha - \beta)j} C_S |k|^{\beta} dk \leq C R^{(\alpha - \beta)j} \sum_{i \in I} |i|^{\beta/2}.
	\end{align*}
	Concerning the second term of the bias, according to Lemma \ref{lemma_remain_herm} for $r  = R^{1 - j}$:
	\begin{align*}
		B_2(I, R) \leq R^{\alpha j} \sum_{i \in I} \left(\frac{2d}{\pi^{1/4}} e^{|i|_{\infty}} \int_{y \geq R^{1-j} - \sqrt{2 |i|_{\infty}}} e^{-y^2/2} dy\right)^{1/2}.
	\end{align*}
	Note that, for all $(i, j) \in I\times J$, $R^{1-j} - \sqrt{2 |i|_{\infty}} \geq R^{1-j_{\text{max}}} - \sqrt{2 |i|_{\infty}} \geq R^{1-j_{\text{max}}} - \sqrt{2 i_{\text{max}}} > 0$ by assumption, so using standard bound on the complementary error function, see, e.g., ~\cite{abramowitz1968handbook}, we get:
	\begin{align*}
		B_2(I, R) &\leq \frac{\sqrt{2d}}{\pi^{1/8}} R^{\alpha j} \sum_{i \in I} e^{|i|_{\infty}/2} \left(\frac{\exp\left(-(R^{1-j} - \sqrt{2 |i|_{\infty}})^2/2\right)}{R^{1-j} - \sqrt{2 |i|_{\infty}}}\right)^{1/2} \\& \leq \frac{\sqrt{2d}}{\pi^{1/8}} R^{\alpha j_{\text{max}}} |I| e^{i_{\text{max}}/2} \frac{\exp\left(-(R^{1-j_{\text{max}}} - \sqrt{2 i_{\text{max}}})^2/4\right)}{(R^{1-j_{\text{max}}} - \sqrt{2 i_{\text{max}}})^{1/2}}.
	\end{align*}
	
	Concerning the variance term in \eqref{split R}, we demonstrated in the proof of Theorem \ref{thm_clt} (see Section~\ref{sec_clt}) that all moments of the statistics $(R^{(\alpha - d)j/2} T_j(\psi_i, R))_{i \in I}$ converge to the moments of $(N(i))_{i \in I}$ as $R \to \infty$ where $(N(i))_{i \in I}$ is a Gaussian vector with covariance matrix $\left(\int_{\mathbb{R}^d} \mathcal{F}[\psi_{i_1}](k) \overline{\mathcal{F}[\psi_{i_2}]}(k) t|k|^{\alpha} dk\right)_{i_1 \in I, i_2 \in I}$. So there exists $R_0 > 1$ such that for all $R \geq R_0$:
	\begin{align*}
		V(i, R) &\leq 2 \operatorname{Var}\left[\sum_{i \in I} N(i)^2\right] = 2 \sum_{i_1 \in I, i_2 \in I} \operatorname{Cov}[N(i_1)^2, N(i_2)^2].
	\end{align*}
	Moreover, for Gaussian vectors $\operatorname{Cov}[N(i_1)^2, N(i_2)^2] = 2(\operatorname{Cov}[N(i_1), N(i_2)])^2$, so we can apply Lemma \ref{lemma_bound_sum_herm} to get, for some $C>0$, 
	\begin{align*}
		V(i, R) &\leq 4 \sum_{i_1 \in I, i_2 \in I} \left(\int_{\mathbb{R}^d} \mathcal{F}[\psi_{i_1}](k) \overline{\mathcal{F}[\psi_{i_2}]}(k) t|k|^{\alpha} dk\right)^2 \leq C \sum_{i \in I} |i|^{\alpha}.
	\end{align*}
	Finally, using again Lemma \ref{lemma_bound_sum_herm} for the denominator of $\mathcal{R}(I,R)$ in \eqref{split R}, we obtain that there exists $C > 0$ such that, 
	\begin{align*}
		\mathcal{R}(I, R) & \leq C\left(B_1(I, R)^2 +  V(I, R) + B_2(I, R)^2\right) \\& \leq C \frac1{\left(\sum_{i \in I} |i|^{\alpha/2}\right)^2} \Bigg(R^{2(\alpha - \beta)j} \left(\sum_{i \in I} |i|^{\beta/2}\right)^2 + \sum_{i \in I} |i|^{\alpha} \\& \qquad \qquad \qquad \qquad \qquad \qquad + R^{2\alpha j_{\text{max}}} |I|^2 e^{i_{\text{max}}} \frac{\exp\left(-(R^{1-j_{\text{max}}} - \sqrt{2 i_{\text{max}}})^2/2\right)}{R^{1-j_{\text{max}}} - \sqrt{2 i_{\text{max}}}}\Bigg).
	\end{align*} 
	Remember that by assumption, $I$ is of the form $I = \{i \in \mathbb{N}^d |~|i|_{\infty} < i_{\text{max}}\}$ with $i_{\text{max}} \geq 1$. We then use the facts that for $\nu \in \{\alpha/2, \alpha, \beta\}$, 
	\begin{align*}
		\sum_{i\in I} |i|^{\nu} & \leq d^{\nu/2} \sum_{0 \leq i_1, \dots, i_d < i_{\text{max}}} (i_1^\nu + \dots + i_d^\nu)  \leq  d^{\nu/2} i_{\text{max}}^{d} i_{\text{max}}^{\nu} =d^{\nu/2} i_{\text{max}}^{d+\nu}
	\end{align*}
	and
	\begin{align*}
		\sum_{i\in I} |i|^{\nu} & \geq \sum_{0 \leq i_1, \dots, i_d < i_{\text{max}}} i_1^\nu  = i_{\text{max}}^{d-1} \sum_{0 \leq i_1 < i_{\text{max}}} i_1^\nu  \geq \frac{i_{\text{max}}^{d-1}}{\nu +1} i_{\text{max}}^{\nu+1} = \frac{1}{\nu +1} i_{\text{max}}^{d+\nu},
		\end{align*}
	to obtain for $R \geq R_0$, since $|I| = i_{\text{max}}^d$,
	\begin{align*}
		\mathcal{R}(I, R) &\leq C\left(R^{2(\alpha - \beta)j}i_{\text{max}}^{\beta - \alpha} + i_{\text{max}}^{-d} + \frac{R^{2\alpha j_{\text{max}}} |I|^2}{i_{\text{max}}^{2d+\alpha}} e^{i_{\text{max}}} \frac{\exp\left(-(R^{1-j_{\text{max}}} - \sqrt{2 i_{\text{max}}})^2/2\right)}{R^{1-j_{\text{max}}} - \sqrt{2 i_{\text{max}}}} \right) \\& \leq C \left(\left(\frac{|I|^{1/d}}{R^{2{j_{\text{min}}}}}\right)^{\beta - \alpha} + \frac1{|I|} + R^{2\alpha j_{\text{max}}} \frac{e^{|I|^{1/d}}}{|I|^{\alpha/d}} \frac{\exp\left(-(R^{1-j_{\text{max}}} - \sqrt{2 i_{\text{max}}})^2/2\right)}{R^{1-j_{\text{max}}} - \sqrt{2 i_{\text{max}}}} \right).
	\end{align*}
	Because $j_{\text{max}} < 1$, we may choose $R_0$ larger, if necessary, to ensure that for $R \geq R_0$:
	$$R^{2\alpha j_{\text{max}}} \frac{e^{|I|^{1/d}}}{|I|^{\alpha/d}} \exp\left(-(R^{1-j_{\text{max}}} - \sqrt{2 i_{\text{max}}})^2/2\right) \leq  \exp\left(-(R^{1-j_{\text{max}}} - \sqrt{2 i_{\text{max}}})^2/4\right),$$
	and consequently:
	\begin{align*}
		\mathcal{R}(I, R) \leq C \left(\left(\frac{|I|^{1/d}}{R^{2{j_{\text{min}}}}}\right)^{\beta - \alpha} + \frac1{|I|} + \frac{\exp\left(-(R^{1-j_{\text{max}}} - \sqrt{2 i_{\text{max}}})^2/4\right)}{R^{1-j_{\text{max}}} - \sqrt{2 i_{\text{max}}}} \right).
	\end{align*}
	Plugging in this inequality in \eqref{eq_prop_bias_var_upp_pr} concludes the proof.
\end{proof}

\subsection{Asymptotic covariance matrix for Hermite  wavelets}\label{app_cov_mat}

As announced in Remark~\ref{rmk_cov_Hermites}, $\Sigma_R$, defined in equation \eqref{eq_cov_mat_R}, can be implemented numerically without costly numerical integrations. This section focuses on the dimension $d = 2$,  but the method can be adapted in dimension $d =1$ and $d \geq 3$.

Beyond the explicit formula provided in the next proposition, one may also use properties of $\Sigma_R$ in order to avoid the computation of certain coefficients. Specifically, denoting   $\Sigma_R = (\Sigma_R(i_1, i_2, j_1, j_2))_{(i_1, i_2) \in I^2, (j_1, j_2) \in J^2}$, we have $\Sigma_R(i_1, i_2, j_1, j_2) = \Sigma_R(i_2, i_1, j_2, j_1)$. Moreover, denoting $i = (i^1, i^2)$ for $i\in \mathbb N^2$, we have 
by parity that $\Sigma_R(i_1, i_2, j_1, j_2)=0$ whenever $(i_1^1 - i_2^1)$ or $(i_1^2 - i_2^2)$ is odd. 

In the next proposition, we denote
$$\forall z > 0,~\Gamma(z) = \int_{0}^{\infty} t^{z-1} e^{-t} dt, \quad \forall p, q \geq 0,~B(p, q) = \int_{0}^{2 \pi} \cos(\theta)^p \sin(\theta)^q d\theta.$$	
Remark that $B(p, q)$ can be easily computed, since on the one hand $B(p, q) \neq 0$ if and only if $p$ and $q$ are even, and on the other hand, for all $ p, q \geq 2$,
$$B(p, q) = \frac{(p-1)(q-1)}{(p+q)(p+q-2)} B(p-2, q-2), ~B(0, p) = B(p, 0) = \prod_{l = 1}^{p/2} \left(1 - \frac{1}{2l}\right).$$

\begin{proposition}\label{prop_computation_sig_R}
	Assume $d= 2$ and denote $(\Sigma_R(i_1, i_2, j_1, j_2))_{(i_1, i_2) \in I^2, (j_1, j_2) \in J^2}$ the covariance matrix defined in \eqref{eq_cov_mat_R}. Then $\Sigma_R(i_1, i_2, j_1, j_2)$ is given by
	\begin{multline*}
		 \frac{R^{\frac{\beta+2}2 (j_1 +j_2)}}2 \sum_{l_1^1, l_1^2, l_2^1, l_2^2 \geq 0} c_{l_1^1} c_{l_1^2} c_{l_2^1} c_{l_2^2} (-\bm{i})^{|l_1|_1}  \bm{i}^{|l_2|_1} B(|l_1|_1, |l_2|_1)\Gamma\left(\frac{2+\beta+|l_1|_1 + |l_2|_1}2\right) \\  \times R^{j_1|l_1|_1+j_2|l_2|_1}\left(\frac{R^{2j_1}+R^{2j_2}}2\right)^{-(2+\beta+|l_1|_1 + |l_2|_1)/2}, 
	\end{multline*}
	where $(c_{p})_{p \geq 0}$ are the coefficients of the Hermite polynomial $H_{p}$.
	\end{proposition}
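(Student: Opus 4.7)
The plan is to evaluate the integral defining $\Sigma_R(i_1,i_2,j_1,j_2)$ in closed form by expanding each Hermite function into a polynomial times a Gaussian and reducing the resulting integrals to Gaussian moments. Recall from the paragraph preceding the proposition that Hermite wavelets are Fourier eigenvectors: $\mathcal{F}[\psi_{i_a}] = (-\bm{i})^{|i_a|_1}\psi_{i_a}$ for $a = 1,2$. Since $\psi_{i}$ is real,
\[
\mathcal{F}[\psi_{i_1}](R^{j_1}k)\,\overline{\mathcal{F}[\psi_{i_2}]}(R^{j_2}k) = (-\bm{i})^{|i_1|_1}\bm{i}^{|i_2|_1}\,\psi_{i_1}(R^{j_1}k)\,\psi_{i_2}(R^{j_2}k).
\]
Replacing each univariate factor $H_{i_a^b}(y) = \sum_{l_a^b=0}^{i_a^b} c_{l_a^b} y^{l_a^b}$ by its polynomial form in the product definition of $\psi_{i_a}$ yields
\[
\psi_{i_a}(R^{j_a}k) = e^{-R^{2j_a}|k|^2/2}\sum_{l_a^1, l_a^2} c_{l_a^1}c_{l_a^2}\, R^{j_a|l_a|_1}\,k_1^{l_a^1}\,k_2^{l_a^2}.
\]
Multiplying the two expressions collapses the two Gaussians into $e^{-A|k|^2/2}$ with $A = R^{2j_1} + R^{2j_2}$, so that the integrand is a finite sum of monomials in $(k_1,k_2)$ against this Gaussian weight times $|k|^{\beta}$.

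I would then evaluate the resulting integrals term by term in polar coordinates $k = r(\cos\theta,\sin\theta)$. The angular part of the monomial of total degree $|l_1|_1 + |l_2|_1$ produces a factor of the Beta-type $B(\cdot,\cdot)$ that vanishes unless both of its arguments are even, a parity constraint consistent with that of the Hermite coefficients (which satisfy $c_l = 0$ unless $l$ has the same parity as the underlying index). The radial part reduces to a standard Gaussian moment via the substitution $u = Ar^2/2$:
\[
\int_{0}^{\infty} e^{-Ar^2/2}\,r^{n+1}\,dr = \tfrac{1}{2}\,\Gamma\!\left(\tfrac{n+2}{2}\right)\,(A/2)^{-(n+2)/2},\qquad n = |l_1|_1 + |l_2|_1 + \beta.
\]
Multiplying by the prefactor $R^{(\beta+2)(j_1+j_2)/2}$ and the $R^{j_1|l_1|_1+j_2|l_2|_1}$ factors coming from the scalings, and rearranging using the parity constraints above to rewrite the phases and the arguments of $B$ in the form $(|l_1|_1,|l_2|_1)$, yields the announced expression.

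There is no serious technical obstacle: the proof is a closed-form computation. The only delicate point is notational bookkeeping, namely (i) tracking the implicit dependence $c_{l_a^b} = c_{l_a^b}(i_a^b)$ so that the empty terms with $l_a^b > i_a^b$ are correctly handled, (ii) assembling the three separate sources of $R$-powers (the $R^{(\beta+2)(j_1+j_2)/2}$ prefactor, the $R^{j_a|l_a|_1}$ from expanding the Hermite polynomials, and the $(A/2)^{-(\cdot)}$ from the radial integral), and (iii) verifying that the parity of the Hermite coefficients together with the vanishing of $B(p,q)$ for odd $p$ or $q$ conspire to reduce the raw expression to the compact symmetric form stated in the proposition.
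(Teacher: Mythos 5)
Your proposal is correct and takes essentially the same route as the paper's proof: both use the Fourier-eigenvector property of the Hermite wavelets to reduce $\Sigma_R$ to a real integral of a product of two scaled Hermite functions against $|k|^{\beta}$, then pass to polar coordinates, expand the product of the four Hermite polynomials into monomials, and evaluate the radial Gaussian moment via the $\Gamma$-function (your substitution $u=Ar^2/2$ is the same computation as the paper's rescaling $r\leftrightarrow r\sqrt{(R^{2j_1}+R^{2j_2})/2}$) and the angular integral via $B(\cdot,\cdot)$.
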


\begin{proof}[Proof of Proposition \ref{prop_computation_sig_R}]
	Using the fact that Hermite wavelets are eigenvectors of the Fourier transform, i.e., for any $i\in\mathbb N^2$, $\mathcal{F}[\psi_{i}] =~(-\bm{i})^{i^1+i^2} \psi_{i}$ where $i = (i^1, i^2)$, we get:  
	\begin{align*}
		\Sigma_R(i_1, i_2, j_1, j_2) &= \bm{i}^{i_2^1+i_2^2} (-\bm{i})^{i_1^1+i_1^2} R^{\frac{\beta+d}2 (j_1 +j_2)} \underbrace{\int_{\mathbb{R}^2} \psi_{i_1}(R^{j_1} k) \psi_{i_2}(R^{j_2}k) |k|^{\beta} dk}_{ =: A}.
	\end{align*}
	To compute the latter integral, we use polar coordinates:
	\begin{align*}
		A &= \int_{\mathbb{R}^2} H_{i_1^1}(R^{j_1} k_1) H_{i_1^2}(R^{j_1} k_2) e^{-\frac12 R^{2j_1} k_1^2 -\frac12 R^{2j_2} k_2^2} \\& \qquad \qquad \times H_{i_2^1}(R^{j_2} k_1) H_{i_2^2}(R^{j_2} k_2) e^{-\frac12 R^{2j_2} k_2^2-\frac12 R^{2j_2} k_1^2} |k|^{\beta} dk \\& = \displaystyle\int_{0}^{\infty} \int_{0}^{2\pi} H_{i_1^1}(R^{j_1} r \cos(\theta)) ~H_{i_1^2}(R^{j_1} r \sin(\theta))~H_{i_2^1}(R^{j_2} r \cos(\theta))~H_{i_2^2}(R^{j_2} r \sin(\theta)) \\& \qquad \qquad \times \exp\left(-\frac12 (R^{2j_1}+R^{2j_2})r^2\right) r^{\beta+1} d\theta dr.
	\end{align*}
	To get the announced formula, we expand the product of the four polynomials, perform the change of variable $r \leftrightarrow r \sqrt{(R^{2j_1}+R^{2j_2})/2}$ and finally use the fact that for $\nu > 0$, $\int_{0}^{\infty} r^{\nu} e^{-r^2} dr =\Gamma((\nu +1)/2)/2$.  
\end{proof}

\begin{appendix}
\section*{Cumulant and Brillinger mixing for point processes}\label{appendix}
We recall in this appendix the definition of factorial cumulant moment measures of a simple point process and their reduced counterpart in case of stationarity; see also \cite{daley2007introduction, daley2003introduction}. We then present the property of Brillinger mixing of a simple stationary point process, as assumed in our Theorem~\ref{thm_clt}; see for instance \cite{heinrich2012asymptotic, biscio2016brillinger, blaszczyszyn2019limit}.

As a generalisation of the second order factorial moment measure \eqref{rho2}, the factorial moment measures $\rho^{(r)}$, for $r\geq 1$, of a simple point process $\Phi$ is defined for any bounded measurable sets $A_1, \dots, A_r$ in $\mathbb{R}^d$ by
\begin{align*}
		\rho^{(r)}(A_1 \times  \ldots \times A_r) = \mathbb{E}\left(\sum_{x_1, \ldots, x_r \in \Phi}^{\neq} 1\{x_1 \in A_1, \ldots, x_m \in A_r\}\right).
	\end{align*}

The $m$-th order factorial cumulant moment measure $\gamma^{(m)}$ are in turn defined by the following expression:
\begin{align*}
		\gamma^{(m)}(A_1\times \dots\times A_m) = \sum_{\sigma\in\Pi[p]} (-1)^{|\sigma|-1} (|\sigma| - 1)!  \prod_{i=1}^{|\sigma|} \rho^{(|\sigma_i|)}\left(\prod_{k_i \in \sigma(i)} A_{k_i}\right),
	\end{align*}
where, as in \eqref{eq_cumulant-X}, $\Pi[m]$ denotes the set of all unordered partitions of $\{1,\ldots,m\}$.

Furthermore, when the point process $\Phi$ is  stationary, we may consider   for any $m \geq 2$ the reduced $m$-th order factorial cumulant moment measure $\gamma_{\text{red}}^{(m)}$. This is  a locally finite signed measure on $(\mathbb{R}^{d})^{(m-1)}$ that satisfies
	\[
	\gamma^{(m)}\left(A_1\times \dots\times A_m\right) = \int_{A_m} \gamma_{\text{red}}^{(m)}\left(\prod_{i=1}^{m-1} (A_i - x)\right) \, dx,
	\]
	where for $i = 1, \ldots, m - 1$, $(A_i - x)$ is the translation of the set $A_i$ by $x$. 

Finally, a simple stationary point process $\Phi$ is said Brillinger-mixing if, for $m \geq 2$, and any  bounded measurable set $A$ of $\mathbb{R}^d$, $\mathbb{E}\left[|\Phi \cap A|^m\right] < +\infty$ and if $|\gamma_{\text{red}}^{(m)}|(\mathbb{R}^{d(m-1)}) < +\infty,$ 
	where the total variation of  $\gamma_{\text{red}}^{(m)}$ is defined by $|\gamma_{\text{red}}^{(m)}|  := {\gamma_{\text{red}}^{(m)}}^+ + {\gamma_{\text{red}}^{(m)}}^-$ where $\gamma_{\text{red}}^{(m)}  = {\gamma_{\text{red}}^{(m)}}^+ - {\gamma_{\text{red}}^{(m)}}^-$ is the Jordan decomposition of the signed measure $\gamma_{\text{red}}^{(m)}$.
\end{appendix}

\label{references}
\bibliographystyle{imsart-number} 


\end{document}